\Crefname{ALC@unique}{Line}{Lines}
\newcommand{\M}{\mathcal{M}}
\newcommand{\R}{\mathbb{R}}
\newcommand{\E}{\mathbb{E}}
\newcommand{\St}{\mathrm{St}}
\newcommand{\grad}{\mathrm{grad}}
\newcommand{\hess}{\mathrm{Hess}}
\newcommand{\proj}{\operatorname{Proj}}
\newcommand{\argmin}{\operatorname{argmin}}
\newcommand{\Exp}{\operatorname{Exp}}
\newcommand{\N}{\mathcal{N}}
\newcommand{\X}{\mathcal{X}}
\newcommand{\Tr}{\mathrm{Tr}}
\newcommand{\st}{\mathrm{ s.t. }}
\newtheorem{theorem}{Theorem}[section]
\newtheorem{proposition}{Proposition}[section]
\newtheorem{lemma}{Lemma}[section]
\newtheorem{remark}{Remark}[section]
\newtheorem{corollary}{Corollary}[section]
\newtheorem{definition}{Definition}[section]
\newtheorem{assumption}{Assumption}[section]
\crefname{hypothesis}{Hypothesis}{Hypotheses}
\title{Stochastic Zeroth-order Riemannian Derivative Estimation and Optimization}
\author{Jiaxiang Li\thanks{Department of Mathematics, University of California, Davis.\texttt{jxjli@ucdavis.edu}.} 
\and Krishnakumar Balasubramanian\thanks{Department of Statistics, University of California, Davis. \texttt{kbala@ucdavis.edu}.}
\and Shiqian Ma\thanks{Department of Mathematics, University of California, Davis. \texttt{sqma@ucdavis.edu}.}
}
\begin{document}
\maketitle

\begin{abstract}
We consider stochastic zeroth-order optimization over Riemannian submanifolds embedded in Euclidean space, where the task is to solve Riemannian optimization problem with only noisy objective function evaluations. Towards this, our main contribution is to propose estimators of the Riemannian gradient and Hessian from noisy objective function evaluations, based on a Riemannian version of the Gaussian smoothing technique. The proposed estimators overcome the difficulty of the non-linearity of the manifold constraint and the issues that arise in using Euclidean Gaussian smoothing techniques when the function is defined only over the manifold. We use the proposed estimators to solve Riemannian optimization problems in the following settings for the objective function: (i) stochastic and gradient-Lipschitz (in both nonconvex and geodesic convex settings), (ii) sum of gradient-Lipschitz and non-smooth functions, and (iii) Hessian-Lipschitz. For these settings, we analyze the oracle complexity of our algorithms to obtain appropriately defined notions of $\epsilon$-stationary point or $\epsilon$-approximate local minimizer. Notably, our complexities are independent of the dimension of the ambient Euclidean space and depend only on the intrinsic dimension of the manifold under consideration. We demonstrate the applicability of our algorithms by simulation results and real-world applications on black-box stiffness control for robotics and black-box attacks to neural networks.
\end{abstract}
\section{Introduction}
Consider the following Riemannian optimization problem:
\begin{equation}\label{problem}
    \min \ f(x)+h(x), \ \st, \ x\in\M,
\end{equation}
where $\M$ is a Riemannian submanifold embedded in $\R^n$, $f:\M\rightarrow\R$ is a smooth and possibly nonconvex function, and $h:\mathbb{R}^n \to \mathbb{R}$ is a convex and nonsmooth function. Here, convexity and smoothness are interpreted as the function is being considered in the ambient Euclidean space. Iterative algorithms for solving \eqref{problem} usually require the gradient and Hessian information of the objective function. However, in many applications, the analytical form of the function $f$ (or $h$) and its gradient are not available, and we can only obtain noisy function evaluations via a zeroth-order oracle. This setting, termed as \emph{stochastic zeroth-order Riemannian optimization}, generalizes stochastic zeroth-order Euclidean optimization (i.e., when $\M\equiv\R^n$ in \eqref{problem}), a topic which goes back to the early works of~\cite{matyas1965random, nelder1965simplex, nemirovsky1983problem} in the 1960's; see also \cite{conn2009introduction, audet2017derivative, larson2019derivative} for recent books and surveys. 

In the Euclidean setting, two popular techniques for estimating the gradient from (noisy) function queries include the finite-differences method~\cite{spall2005introduction} and the Gaussian smoothing techniques~\cite{nemirovsky1983problem}. Earlier works in this setting focused on using the estimated gradient to obtain asymptotic convergence rates of iterative optimization algorithms. Recently, obtaining non-asymptotic guarantees on the oracle complexity of stochastic zeroth-order optimization has been of great interest. Towards that, \cite{nesterov2011random,nesterov2017random} analyzed the Gaussian smoothing technique for estimating the Euclidean gradient from noisy function evaluations and proved that for unconstrained convex minimization, one needs $O(n^2/\epsilon^2)$ noisy function evaluations to obtain an $\epsilon$-optimal solution. 
{This complexity was improved by~\cite{ghadimi2013stochastic} to $O(n/\epsilon^2)$ when the objective function is further assumed to be gradient-smooth}. Note that this oracle complexity depends linearly on the problem dimension $n$ and it was proved that the linear dependency on $n$ is unavoidable \cite{jamieson2012query, duchi2015optimal}. Nonconvex and smooth setting was also considered in \cite{ghadimi2013stochastic}. In particular, now assuming $h\equiv 0$  and $\M\equiv \R^n$ in~\eqref{problem}, it was shown that the number of function evaluations for obtaining an $\epsilon$-stationary point $\bar{x}$ (i.e., $\E\| \nabla f(\bar{x})\| \leq \epsilon$), is $O(n/\epsilon^4)$.


In the Riemannian manifold setting, however, a main challenge in designing and analyzing zeroth-order algorithms is the lack of availability of theoretically sound methods to estimate the Riemannian gradients and Hessians from (noisy) function evaluations.
To this end, our main contribution in this work is to construct estimators of the Riemannian gradient and Hessian from noisy function evaluations, based on a modified Gaussian smoothing technique from~\cite{nesterov2017random} and~\cite{balasubramanian2019zeroth}. The main difficulty addressed here is that the gradient and Hessian estimator in~\cite{nesterov2017random} and~\cite{balasubramanian2019zeroth} respectively, require computing $f(x+\nu u)$, for some parameter $\nu>0$ and an $n$-dimensional standard Gaussian vector $u$. However, the point $x+\nu u$ may not necessarily lie on the manifold $\mathcal{M}$. To resolve this issue, we propose an estimator based on the smoothing technique and sampling Gaussian random vectors on the tangent space of the manifold $\mathcal{M}$. 
We then use the developed methodology to design stochastic zeroth-order algorithms for solving \eqref{problem} with oracle complexities that depend only on the manifold dimension $d$, and independent of the ambient Euclidean dimension $n$.

\subsection{Related Works}

As mentioned previously, zeroth-order optimization has a long history; we refer the reader to \cite{conn2009introduction, audet2017derivative, larson2019derivative} for more details. The  oracle complexity of  methods from the above works are at least linear in terms of their dependence on dimensionality. Recent works in this field have been focusing on stochastic zeroth-order optimization in high-dimensions \cite{wang2018stochastic,golovin2019gradientless, balasubramanian2019zeroth, cai2020zeroth}. Assuming a sparse structure (for example, the function being optimized depends only on $s$ of the $n$ coordinates), the above works have shown that the oracle complexity of zeroth-order optimization depends only poly-logarithmically on the dimension $n$, and it has a linear dependency only on the sparsity parameter $s$, which is typically small compared to $n$ in several applications. Compared to these works, we assume a manifold structure on the function being optimized and obtain oracle complexities that depend only on the manifold dimension and independent of the ambient Euclidean dimension.

Apart from the above, \emph{Bayesian optimization} is yet another popular class of methods for optimizing functions based on noisy function values. This approach aims at finding the global minimizer by enforcing a Gaussian process prior on the space of function being optimized and using Bayesian sampling techniques. 
We refer the reader to~\cite{mockus1994application, mockus2012bayesian, shahriari2015taking, frazier2018tutorial} for an overview of such techniques in the Euclidean settings and their applications to a variety of fields including robotics, recommender systems, preference learning and hyperparameter tuning. A common limitation of the above algorithms is that they are usually not scalable well to solve high-dimensional problems. 
Recent developments on Bayesian optimization for high-dimensional problems include 
\cite{li2016high, wang2016bayesian, mutny2018efficient, rolland2018high, wang2020learning}
where people considered zeroth-order optimization with structured functions (for example, sparse or additive functions), and developed Bayesian optimization algorithms and related analysis. Very recently,~\cite{oh2018bock, jaquier2020bayesian, jaquier2020high} considered heuristic Bayesian optimization algorithms for function defined over non-Euclidean domains, including Riemannian domains, without any theoretical analysis.

Riemannian optimization in the first or second-order setting has drawn a lot of attention recently due to its applications in various fields, including low-rank matrix completion \cite{boumal2011rtrmc,vandereycken2013low}, phase retrieval \cite{bendory2017non,sun2018geometric}, dictionary learning \cite{cherian2016riemannian,sun2016complete}, dimensionality reduction \cite{harandi2017dimensionality,tripuraneni2018averaging,mishra2019riemannian} and manifold regression~\cite{lin2017extrinsic,lin2020robust}. For smooth Riemannian optimization, i.e., $h\equiv 0$ in \eqref{problem}, it was shown that Riemannian gradient descent method require $\mathcal{O}(1/{\epsilon^{2}})$ iterations to converge to an $\epsilon$-stationary point \cite{boumal2018global}. Stochastic algorithms were also studied for smooth Riemannian optimization  \cite{bonnabel2013stochastic,zhou2019faster,weber2019nonconvex,zhang2016fast,kasai2018riemannian,zhou2019faster,weber2019nonconvex}. In particular, using the SPIDER variance reduction technique, \cite{zhou2019faster} proved that $\mathcal{O}(1/{\epsilon^{3}})$ oracle calls are required to obtain an $\epsilon$-stationary point in expectation. When the function $f$ takes a finite-sum structure, the Riemannian SVRG \cite{zhang2016fast} achieves $\epsilon$-stationary solution with $\mathcal{O}({k^{2/3}}/{\epsilon^{2}})$ oracle calls where $k$ is number of summands. When the nonsmooth function $h$ presents in \eqref{problem},  Riemannian sub-gradient methods (RSGM)  are widely used  \cite{borckmans2014riemannian,li2019nonsmooth} and they require $\mathcal{O}(1/{\epsilon^{4}})$ iterations. ADMM for solving \eqref{problem} has also been studied \cite{kovnatsky2016madmm,Lai2014}, but they usually lack convergence guarantee, while the analysis presented in \cite{Zhang-Ma-Zhang-2017} requires some strong assumptions. The recently proposed manifold proximal gradient method (ManPG) \cite{chen2018proximal} for solving \eqref{problem} requires $\mathcal{O}(1/{\epsilon^{2}})$ number of iterations to find an $\epsilon$-stationary solution. Variants of ManPG such as ManPPA \cite{chen2019manifold}, ManPL \cite{wang-manpl-2020} and stochastic ManPG \cite{Wang-stochastic-ManPG} have also been studied. Note that none of these works considers the zeroth-order setting. 
Recently, there are some attempts on stochastic zeroth-order Riemannian optimization \cite{chattopadhyay2015derivative, fong2019stochastic}, but they are mostly heuristics and do not have any rigorous convergence guarantees. 

\subsection{Motivating Applications}\label{sec:motiveapp}
Our motivation for developing a theoretical framework for stochastic zeroth-order Riemannian optimization is due to several important emerging applications; see, e.g., \cite{chattopadhyay2015derivative, marco2017design, yuan2019bayesian,  jaquier2020bayesian, kachan2020persistent}. Below, we discuss two concrete examples, which we will revisit in Section~\ref{sec:revisitrealworld}, to illustrate the applicability of the methods developed in this work. We also briefly discuss a third application in topological data analysis, and numerical experiments on this application will be conducted in a future work, as it is more involved and beyond the scope of this paper. 

\subsubsection{Black-box Stiffness Control for Robotics}\label{sec:roboticsapp}

Our first motivating application is from the field of robotics. It has become increasingly common to use zeroth-order optimization techniques to optimize control parameter and policies in robotics~\cite{marco2016automatic, driess2017constrained, yuan2019bayesian}. This is because that the cost functions being optimized in robotics are not available in a closed form as a function of the control parameter. Invariably for a given choice of control parameter, the cost function needs to be evaluated through a real-world experiment on a given robot or through simulation. Recently, domain knowledge has been used as constraints on the control parameter space, among which a common choice is the geometry-aware constraint. For example, control parameters like stiffness, inertia and manipulability lie on the positive semidefinite manifold, orthogonal group and unit sphere, respectively. Hence, there is a need to develop zeroth-order optimization methods over the manifolds to optimize the above mentioned control parameters~\cite{jaquier2020bayesian}.

\subsubsection{Zeroth-order Attacks on Deep Neural Networks (DNNs)}\label{sec:zoattackapp}
Our second motivating application is based on developing black-box attacks to DNNs. Despite the recent success of DNNs, studies have shown that they are vulnerable to adversarial attacks: even a well-trained DNN could completely misclassify a slightly perturbed version of the original image (which is undetectable to the human eyes); see, e.g., \cite{szegedy2013intriguing,goodfellow2014explaining}. As a result, it is extremely important on the one hand to come up with methods to train DNNs that are robust to adversarial attacks, and on the other hand to develop efficient attacks on DNNs with the goal being to make them misclassify. In practice, as the architecture of the DNN is not known to the attacker, several works, for example,~\cite{chen2017zoo,tu2019autozoom,cheng2018query}, use zeroth-order optimization algorithms for designing adversarial attacks. However, existing works have an inherent drawback-- the perturbed training example designed to fool the DNN is not in the same domain as the original training data. For example, despite the fact that natural images typically lie on a manifold \cite{Weinberger-image-manifold}, the perturbations are not constrained to lie on the same manifold. This naturally motivates us to use zeroth-order Riemannian optimization methods to design adversarial examples to fool DNNs, which at the same time preserves the manifold structures in the training data.

\subsubsection{Black-box Methods for Topological Dimension  Reduction}

The third motivating example is from the field of dimension reduction, a popular class of techniques for reducing the dimension of high-dimensional unstructured data for feature extraction and visualization. There exists a variety of methods for this task; we refer the interested reader to~\cite{lee2007nonlinear, burges2010dimension} for more details. However, a majority of the existing techniques are based on \emph{geometric} motivations. Recently, there has been a growing literature on using \emph{topological} information for performing data analysis~\cite{chazal2017introduction, mcinnes2018umap, rabadan2019topological}. One such method is a dimension reduction technique called Persistent Homology-Based Projection Pursuit~\cite{kachan2020persistent}. Roughly speaking, given a point-cloud data set with cardinality $n$ and dimension $m$ (i.e., a matrix $X \in \mathbb{R}^{m \times n}$), persistence homology refers to developing a multi-scale characterization of topologically invariant features available in the data. Such information is summarized in terms of the so-called persistence diagram, $D(X)$, which is a multiset of points in a two-dimensional plane. The idea in~\cite{kachan2020persistent} is to obtain a transformation $P^\top \in \mathbb{R}^{p \times m}$, with $p \ll m$, such that the topological summaries of the original dataset $X$ and the reduced dimensional dataset $P^\top X$ are close to each other; that is, the persistence diagram $D(X)$ and $D(P^\top X)$ are close in the 2-Wasserstein distance. The problem is then formulated as (informally speaking),
\begin{align*}
\min_{\{P \in \mathbb{R}^{m \times p}: P^\top P = I  \}} W_2(D(X), D(P^\top X)),
\end{align*} 
which is an optimization problem over the Stiefel manifold (see Section~\ref{sec:basic} for more details). It turns out that calculating the gradient of the above objective function is highly non-trivial and computationally expensive~\cite{leygonie2019framework}. However, evaluating the objecting function for various value of the matrix $P$ is relatively less expensive. Hence, this serves as yet another problem in which the methodology developed in this work could be applied naturally.
   
\subsection{Main Contributions} 
We now summarize our main contributions. 

\begin{enumerate}[leftmargin=15pt,noitemsep]
    \item In Section~\ref{sec:prelim}, we propose the (stochastic) zeroth-order Riemannian gradient (\ref{Gauss_oracle}) and Hessian (\ref{Gauss_oracle_second_order}) estimators, which addresses the infeasibility issue of the sampling for the case of derivative-free optimization over manifolds.
    \item In Section~\ref{sec:algorithms}, we demonstrate the applicability of the developed estimators for stochastic zeroth-order Riemannian optimization, as listed below. A summary of these results is given in Table \ref{table0}. To the best of our knowledge, our results are the first complexity results for stochastic zeroth-order Riemannian optimization.
    \begin{itemize}
    \item When $h(x)\equiv 0$ and the exact function evaluations of $f(x)$ are obtainable, we propose a zeroth-order Riemannian gradient descent method (\texttt{ZO-RGD}) and provide its oracle complexity for obtaining an $\epsilon$-stationary point of \eqref{problem} (see Theorem~\ref{thm1}).
    \item When $h(x)\equiv 0$ and $f(x) = \E_\xi [F(x,\xi)]$, we propose a zeroth-order Riemannian stochastic gradient descent method (\texttt{ZO-RSGD}). We analyze its oracle complexity under two different settings (see Theorems \ref{thm2} and \ref{thm:geo_cvx}). 
    \item When $h(x)$ is convex and nonsmooth, we propose a zeroth-order stochastic Riemannian proximal gradient method (\texttt{ZO-SManPG}) and provide its oracle complexity for obtaining an $\epsilon$-stationary point of \eqref{problem} (see  Theorem~\ref{thm3}).
    \item When $h(x)\equiv 0$ and $f(x) = \E_\xi [F(x,\xi)]$, where $F(x, \xi)$ satisfies certain Lipschitz Riemannian Hessian property, we propose a zeroth-order Riemannian stochastic cubic regularized Newton's method (\texttt{ZO-RSCRN}) that provably converges to an $\epsilon$-approximate local minimizer (see Theorem~\ref{thm:scrn}).
    \end{itemize}
    \item In Section~\ref{experiments}, we provide experimental results on simulated data to quantify the performance of our methods. We then demonstrate the applicability of our methods to the problem of black-box attacks to deep neural networks and robotics.
\end{enumerate}

\begin{table}[t]
\begin{center}
\begin{small}
\begin{sc}
\begin{tabular}{|c|c|c|c|}
\hline
Algorithm & Structure & Iteration Complexity & Oracle Complexity \\
\hline
\texttt{ZO-RGD} & smooth & $\mathcal{O}\left({d}/{\epsilon^{2}}\right)$ & $\mathcal{O}\left({d}/{\epsilon^{2}}\right)$ \\ \hline
 \texttt{ZO-RSGD} & \thead{smooth,\\ stochastic} & $\mathcal{O}\left(1/{\epsilon^{2}}\right)$ & $\mathcal{O}({d}/{\epsilon^{4}})$ \\ \hline
 \texttt{ZO-RSGD} & \thead{smooth, stochastic,\\ Geo-convex} & $\mathcal{O}\left( {1}/{\epsilon} \right)$ & $\mathcal{O}\left( {d}/{\epsilon^{2}} \right)$ \\ \hline
 \texttt{ZO-SManPG} & \thead{nonsmooth \\ Stochastic} & $\mathcal{O}\left(1/{\epsilon^{2}}\right)$ & $\mathcal{O}\left({d}/{\epsilon^{4}}\right)$\\ \hline
 \texttt{ZO-RSCRN} & \thead{Lipschitz Hessian\\ Stochastic} & $\mathcal{O}(1/{\epsilon^{1.5}})$ & $\mathcal{O}\left( {d}/{\epsilon^{3.5}}+{d^4}/{\epsilon^{2.5}} \right)$\\
\hline
\end{tabular}
\end{sc}
\end{small}
\end{center}
\caption{Summary of the convergence results proved in this paper. For all but the  \texttt{ZO-RSCRN} algorithm, the reported complexities correspond to $\epsilon$-stationary solution; for the  \texttt{ZO-RSCRN} algorithm the complexities correspond to $\epsilon$-local minimizers. Here, $d$ is the intrinsic dimension of the manifold $\mathcal{M}$. Furthermore, Iteration complexity refers to the number of iterations and oracle complexity refers to the number of calls to the (stochastic) zeroth-order oracle.}
\label{table0}
\end{table}

\section{Preliminaries and Methodology}
\label{sec:prelim}

We start this section with a brief review of basics of Riemannian optimization. We then introduce our stochastic zeroth-order Riemannian gradient and Hessian estimators, and provide bias and moment bounds quantifying the accuracy of the proposed estimators, which will be useful for the convergence analysis later.

\subsection{Basics of Riemannian Optimization}\label{sec:basic}

Let $\M\subset\R^n$ be a differentiable embedded submanifold. We have the following definition for the tangent space.
\begin{definition}[Tangent space]
    Consider a manifold $\M$ embedded in a Euclidean space. For any $x\in\M$, the tangent space $T_x\M$ at $x$ is a linear subspace that consists of the derivatives of all differentiable curves on $\M$ passing through $x$:
    \begin{equation}\label{eq_tangent_space}
        T_x\M=\{\gamma^{\prime}(0): \gamma(0)=x, \gamma([-\delta, \delta]) \subset \mathcal{M}\text { for some } \delta>0, \gamma \text { is differentiable}\}.
    \end{equation}
\end{definition}

The manifold $\M$ is a Riemannian manifold if it is equipped with an inner product on the tangent space, $\langle \cdot, \cdot \rangle _x : T_x\M \times T_x\M \rightarrow \R$, that varies smoothly on $\M$. We also introduce the concept of the dimension of a manifold.
\begin{definition}[Dimension of a manifold~\cite{absil2009optimization}] The dimension of the manifold $\mathcal{M}$, denoted as $d$, is the dimension of the Euclidean space that the manifold is locally homeomorphic to. 
In particular, the dimension of the tangent space is always equal to the dimension of the manifold.
\end{definition}
As an example, consider the Stiefel manifold $\M= \St(n, p):=\{X\in\R^{n\times p}: X^\top X=I_p\}$. The tangent space of $\St(n, p)$ is given by $T_X\M=\{Y\in\R^{n\times p}: X^\top Y+Y^\top X=0\}$. Hence, the dimension of the Stiefel manifold is $n p-\frac{1}{2}p(p+1)$. Note that the dimension of the manifold could be significantly less than the dimension of the ambient Euclidean space. Yet another example is the manifold of low-rank matrices~\cite{vandereycken2013low}. We now introduce the concept of a Riemannian gradient.
\begin{definition}[Riemannian Gradient]\label{def_riemann_grad}
    Suppose $f$ is a smooth function on $\M$. The Riemannian gradient $\grad f(x)$ is a vector in $T_x\M$ satisfying $\left.\frac{d(f(\gamma(t)))}{d t}\right|_{t=0}=\langle v, \grad f(x)\rangle_{x}$ for any $v\in T_x\M$, where $\gamma(t)$ is a curve as described in (\ref{eq_tangent_space}).
\end{definition}

Recall that in the Euclidean setting, a function $f:\mathbb{R}^n \to\mathbb{R}$ is $L$-smooth, if it satisfies
$|f(y)-f(x)-\langle \nabla f(x),y-x \rangle|\leq \frac{L}{2} \|x-y\|^2$, for all $x,y\in\mathbb{R}^n$. We now present the Riemannian counterpart of $L$-smooth functions. To do so, first we need the definition of retraction for a given $x\in\M$.
\begin{definition}[Retraction]\label{def_retraction}
    A retraction mapping $R_x$ is a smooth mapping from $T_x\M$ to $\M$ such that: $R_x(0)=x$, where $0$ is the zero element of $T_x\M$, and the differential of $R_x$ at $0$ is an identity mapping, i.e., $\left.\frac{d R_x(t\eta)}{d t}\right|_{t=0}=\eta$, $\forall \eta\in T_x\M$. In particular, the exponential mapping $\Exp_x$ is a retraction that generates geodesics. 
\end{definition}
\begin{assumption}[$L$-retraction-smoothness]\label{L-manifold-smooth}
There exists $L_g\geq0$ such that the following inequality holds for function $f$ in \eqref{problem}:
\begin{equation}\label{eq:lgsmoothness}
    |f(R_x(\eta))-f(x)-\langle \grad f(x),\eta \rangle_x|\leq \frac{L_g}{2}\|\eta\|^2, 	\forall x\in\M, \eta\in T_x\M.
\end{equation}
\end{assumption}

\cref{L-manifold-smooth} is also known as the restricted Lipschitz-type gradient for pullback function $\hat{f}_x(\eta):=f(R_x(\eta))$ \cite{boumal2018global}. The condition required in \cite{boumal2018global} is weaker because it only requires \cref{eq:lgsmoothness} to hold for $\|\eta\|_x\leq \rho_x$, where constant $\rho_x>0$. In our convergence analysis, we need this assumption to be held for all $\eta\in T_x\M$, i.e., $\rho_x = \infty$. This assumption is satisfied when the manifold $\M$ is a compact submanifold of $\R^n$, the retraction $R_x$ is globally defined\footnote{If the manifold is compact, then the exponential mapping $\Exp_x$ is already globally defined. This is known as the Hopf-Rinow theorem \cite{carmo1992riemannian}.} and function $f$ is $L$-smooth in the Euclidean sense; we refer the reader to \cite{boumal2018global} for more details. 
We also emphasize that \cref{L-manifold-smooth} is weaker than the geodesic smoothness assumption defined in \cite{zhang2016first}. The geodesic smoothness states that, $\forall \eta\in\M$, $f(\Exp_x(\eta))\leq f(x)+\langle g_x, \eta \rangle_x + {L_g d^2(x,\Exp_x(\eta))}/{2}$, where $g_x$ is a subgradient of $f$, $d(\cdot,\cdot)$ represents the geodesic distance. Such a condition is stronger than our \cref{L-manifold-smooth}, in the sense that, if the retraction is the exponential mapping, then geodesic smoothness implies the $L$-retraction-smoothness with the same parameter $L_g$ \cite{bento2017iteration}. 

Throughout this paper, we consider the Riemannian metric on $\M$ that is induced from the Euclidean inner product; i.e. $\langle \cdot, \cdot \rangle _x=\langle \cdot, \cdot \rangle,\ \forall x\in\M$\footnote{If the manifold is not an embedded submanifold of some Euclidean space, then we cannot have an induced Riemannian metric. In this case, the convergence result is not affected, though it would cause implementation difficulties.}. Using this Riemannian metric, the Riemannian gradient of a function is simply the projection of its Euclidean gradient onto the tangent space:
\begin{equation}\label{eq:rgrad}
    \grad f(x)=\proj_{T_x\M}\left(\nabla f(x)\right).
\end{equation}
We also present the definition of Riemannian Hessian for embedded submanifolds, which will be used in Section \ref{sec_cubic} about cubic regularized Newton's method.
\begin{definition}[Riemannian Hessian \cite{zhang2018cubic}]
    Suppose $\M$ is an embedded submanifold of $\R^n$. The Riemannian Hessian is defined as
    \begin{equation}\label{Rhessian_embedded}
        \hess f(x)[\eta] = \proj_{T_{x}\M} (D\grad f(x)[\eta]), \forall x\in\M, \eta\in T_{x}\M, 
    \end{equation}
    where $D\grad f(x)[\eta]$ is the common differential, i.e., $D\grad f(x)[\eta] = (J\grad f(x))[\eta]$, where $J$ is the Jacobian of the gradient mapping.
\end{definition}

\subsection{The Zeroth-order Riemannian Gradient Estimator}\label{section_estimator}

Recall that in the Euclidean setting, Nesterov and Spokoiny \cite{nesterov2017random} analyzed the Gaussian smoothing based zeroth-order gradient estimator. However, as that estimator requires function evaluations outside of the manifold to be well-defined, it is not directly applicable for the Riemannian setting. To address this issue, we introduce our stochastic zeroth-order Riemannian gradient estimator below.
\begin{definition}[Zeroth-Order Riemannian Gradient]\label{def_zero_rgrad}
    Generate $u=Pu_0\in T_x\M$, where $u_0\sim\N(0,I_n)$ in $\R^n$, and $P\in\R^{n\times n}$ is the orthogonal projection matrix onto $T_x\M$. Therefore $u$ follows the standard normal distribution $\N(0,P P^\top)$ on the tangent space in the sense that, all the eigenvalues of the covariance matrix $P P^\top$ are either $0$ (eigenvectors orthogonal to the tangent plane) or $1$ (eigenvectors embedded in the tangent plane). The zeroth-order Riemannian gradient estimator is defined as
    \begin{equation}\label{Gauss_oracle}
        g_\mu(x)=\frac{f(R_x(\mu u))-f(x)}{\mu} u=\frac{f(R_x(\mu P u_0))-f(x)}{\mu}P u_0.
    \end{equation}
\end{definition}


Note that the projection $P$ is easy to compute for commonly used manifolds. For example, for the Stiefel manifold $\M$, the projection is given by $\proj_{T_X\M}(Y) = (I-X X^\top)Y + X\operatorname{skew}(X^\top Y)$, where $\operatorname{skew}(A):=(A-A^\top)/2$ (see \cite{absil2009optimization}). 
\begin{remark}
{In this work, we assume that the function $f$ is defined on submanifolds embedded in Euclidean space, so that it is efficient to sample from the associated tangent space, as discussed above; see also~\cite{diaconis2013sampling}. We remark that the above gradient estimation methodology is more generally applicable to other manifolds. However, the generality comes at the cost of practical applicability as it is not an easy task to efficiently sample Gaussian random objects on the tangent space of general manifolds; see~\cite{hsu2002stochastic} for more details.}
\end{remark}

We now discuss some differences between the zeroth-order gradient estimators in the Euclidean setting \cite{nesterov2017random} and the Riemannian setting \eqref{Gauss_oracle}. In the Euclidean case, the zeroth-order gradient estimator can be viewed as estimating the gradient of the Gaussian smoothed function, $f_\mu(x)=\frac{1}{\kappa}\smallint_{\R^n}f(x+\mu u)e^{-\frac{1}{2}\|u\|^2}d u$,
because $ \nabla f_\mu(x)=\E_u(g_\mu(x))= \frac{1}{\kappa}\smallint_{\R^n} \frac{f(x+\mu u)-f(x)}{\mu} u e^{-\frac{1}{2}\|u\|^2}d u$, where $\kappa$ is the normalization constant for Gaussian. This was also observed as an instantiation of Gaussian Stein's identity \cite{balasubramanian2019zeroth}. However, this observation is no longer true in the Riemannian setting, as we incorporate the retraction operator when evaluating $g_\mu$, and this forces us to seek for a direct evaluation of $\E_u(g_\mu(x))$, instead of utilizing properties of the smoothed function $f_\mu$.
We also remark that, $g_\mu(x)$ is a biased estimator of $\grad f(x)$. The difference between them can be bounded as in \cref{lemma123}. Some intermediate results for this purpose are as follows.  
\begin{lemma}\label{lemma0}
Suppose $\X$ is a $d$-dimensional subspace of $\R^n$, with orthogonal projection matrix $P\in\R^{n\times n}$. $u_0$ follows a standard norm distribution $\N(0,I_n)$, and $u=P u_0$ is the orthogonal projection of $u_0$ onto the subspace $\X$. Then $\forall x\in\X$, we have
\begin{equation}\label{lemma0_eq1}
    x = \frac{1}{\kappa} \int_{\R^n} \langle x, u \rangle u e^{-\frac{1}{2}\|u_0\|^2}d u_0, \qquad\text{and}\qquad      \|x\|^2 = \frac{1}{\kappa} \int_{\R^n} \langle x, u \rangle^2 e^{-\frac{1}{2}\|u_0\|^2}d u_0,
\end{equation}
where $\kappa$ is the constant for normal density function: $\kappa := \smallint_{\R^n}e^{-\frac{1}{2}\|u\|^2}d u=(2\pi)^{n/2}$.
\end{lemma}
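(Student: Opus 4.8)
The plan is to reduce the two identities to standard facts about the $n$-dimensional standard Gaussian, using the fact that $u = P u_0$ is, in distribution, a standard Gaussian supported on the subspace $\X$. First I would fix an orthonormal basis $e_1,\dots,e_n$ of $\R^n$ adapted to the splitting $\R^n = \X \oplus \X^\perp$, so that $e_1,\dots,e_d$ span $\X$ and $e_{d+1},\dots,e_n$ span $\X^\perp$. Writing $u_0 = \sum_{i=1}^n \xi_i e_i$ with $\xi_i$ i.i.d.\ $\N(0,1)$, we have $u = P u_0 = \sum_{i=1}^d \xi_i e_i$. For a fixed $x \in \X$, expand $x = \sum_{i=1}^d a_i e_i$; then $\langle x, u\rangle = \sum_{i=1}^d a_i \xi_i$.

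For the first identity, the $j$-th coordinate (in the $e_j$ basis, $j \le d$) of $\frac{1}{\kappa}\int \langle x,u\rangle\, u\, e^{-\frac12\|u_0\|^2}\,du_0$ equals $\E\big[(\sum_{i=1}^d a_i \xi_i)\,\xi_j\big] = \sum_{i=1}^d a_i \,\E[\xi_i\xi_j] = a_j$, using $\E[\xi_i\xi_j] = \delta_{ij}$; and for $j > d$ the integrand has a factor $\xi_j$ independent of everything else with mean zero, so the coordinate vanishes — consistent with $x \in \X$. Summing over $j$ recovers $x$. For the second identity, $\frac{1}{\kappa}\int \langle x,u\rangle^2 e^{-\frac12\|u_0\|^2}\,du_0 = \E\big[(\sum_{i=1}^d a_i\xi_i)^2\big] = \sum_{i,k} a_i a_k\,\E[\xi_i\xi_k] = \sum_{i=1}^d a_i^2 = \|x\|^2$. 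The only mild technical point is justifying that the Gaussian integrals factor as claimed and that $\int_{\R^n} e^{-\frac12\|u_0\|^2}\,du_0 = (2\pi)^{n/2} = \kappa$; this is routine once one notes $\|u_0\|^2 = \sum_i \xi_i^2$ so the density factorizes over coordinates, and the integral over the $\X^\perp$ coordinates simply contributes a factor that is absorbed into $\kappa$ (equivalently, one integrates those coordinates out first).

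An alternative, more invariant route avoids choosing coordinates: one can recall the Euclidean Gaussian Stein/integration-by-parts identities $\E[u_0 \langle x, u_0\rangle] = x$ and $\E[\langle x, u_0\rangle^2] = \|x\|^2$ for $u_0 \sim \N(0,I_n)$, and then push them through the linear map $P$. Since $P$ is an orthogonal projection onto $\X$ and $x \in \X$, we have $\langle x, u\rangle = \langle x, P u_0\rangle = \langle P x, u_0\rangle = \langle x, u_0\rangle$, and $\E[u \langle x, u\rangle] = P\,\E[u_0\langle x,u_0\rangle] = P x = x$, while $\E[\langle x,u\rangle^2] = \E[\langle x,u_0\rangle^2] = \|x\|^2$. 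I would likely present this shorter version in the main text, as it makes transparent that the subspace $\X$ enters only through the constraint $x \in \X$.

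I do not anticipate a genuine obstacle here; the ``hard part'' is purely bookkeeping — keeping straight that the measure on the right-hand side is the full $n$-dimensional Gaussian $e^{-\frac12\|u_0\|^2}\,du_0$ (with normalizer $\kappa = (2\pi)^{n/2}$) while the vector $u$ appearing in the integrand has been projected, and verifying that the $\X^\perp$ directions integrate out cleanly without affecting the identities. Once that is in place, both equalities are immediate consequences of $\E[\xi_i \xi_j] = \delta_{ij}$.
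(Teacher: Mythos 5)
Your proposal is correct, and the "invariant route" you say you would present in the main text is exactly the paper's proof: it uses the covariance identity $\frac{1}{\kappa}\int u_0u_0^\top e^{-\frac12\|u_0\|^2}du_0=I_n$ together with $\langle x,u\rangle=\langle x,u_0\rangle$ for $x\in\X$, then applies $P$ and takes the inner product with $x$. The coordinate computation you give first is a fine equivalent alternative but adds nothing beyond the shorter argument.
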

\begin{proof}{Proof of \cref{lemma0}}
By the definition of covariance matrix, we have $\frac{1}{\kappa} \int_{\R^n} u_0u_0^\top e^{-\frac{1}{2}\|u_0\|^2}d u_0 = I_n$. Since $\langle x, u \rangle=\langle x, u_0 \rangle$, $\forall x\in\X$, we have
\begin{equation}\label{temp_1}
    \frac{1}{\kappa} \int_{\R^n} \langle x, u \rangle u_0 e^{-\frac{1}{2}\|u_0\|^2}d u_0 = x,
\end{equation}
which implies $\frac{1}{\kappa} \smallint_{\R^n} \langle x, u \rangle u e^{-\frac{1}{2}\|u_0\|^2}d u_0 = P x=x$. Similarly, taking inner product with $x$ on both sides of \cref{temp_1}, we have $\|x\|^2 = \frac{1}{\kappa} \int_{\R^n} \langle x, u \rangle^2 e^{-\frac{1}{2}\|u_0\|^2}d u_0$.
\end{proof}
The following bound for the moments of normal distribution is restated without proof.
\begin{lemma}\label{lem:gaussmoment}\cite{nesterov2017random}
Suppose $u\sim\N(0,I_n)$ is a standard normal distribution. Then for all integers $ p\geq 2$, we have $M_p:= \E_u(\|u\|^p)\leq (n+p)^{p/2}$.
\end{lemma}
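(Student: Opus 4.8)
The plan is to reduce $M_p = \E_u(\|u\|^p)$ to a moment of a chi-squared random variable and then bound a ratio of Gamma functions. First I would note that $\|u\|^2 = \sum_{i=1}^n u_i^2$ has the chi-squared distribution with $n$ degrees of freedom, whose density on $(0,\infty)$ is $\frac{1}{2^{n/2}\Gamma(n/2)}\, t^{n/2-1} e^{-t/2}$. Writing $M_p = \E[(\|u\|^2)^{p/2}]$ and integrating $(\|u\|^2)^{p/2}$ against this density, the substitution $t = 2s$ turns the integral into $\Gamma((n+p)/2)$ up to explicit constants, yielding the exact identity
\[
M_p \;=\; 2^{p/2}\,\frac{\Gamma\!\big(\tfrac{n+p}{2}\big)}{\Gamma\!\big(\tfrac{n}{2}\big)},
\]
which in fact holds for every real $p \ge 0$. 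Hence it suffices to prove the bound $\Gamma(a+s)/\Gamma(a) \le (a+s)^s$ with $a = n/2 > 0$ and $s = p/2 \ge 0$, since then $M_p \le 2^{p/2}\,\big(\tfrac{n+p}{2}\big)^{p/2} = (n+p)^{p/2}$.

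The heart of the proof is therefore the elementary inequality $\Gamma(a+s)/\Gamma(a) \le (a+s)^s$ for all $a>0$, $s\ge 0$, which I would establish from the log-convexity of $\Gamma$ on $(0,\infty)$ (Bohr--Mollerup) together with the functional equation $\Gamma(x+1)=x\Gamma(x)$. Applying log-convexity to the convex combination $a+s = t(a+s+1) + (1-t)a$ with $t = s/(s+1) \in [0,1)$ gives $\Gamma(a+s) \le \Gamma(a+s+1)^{s/(s+1)}\,\Gamma(a)^{1/(s+1)}$. Substituting $\Gamma(a+s+1) = (a+s)\,\Gamma(a+s)$, dividing both sides by $\Gamma(a+s)^{s/(s+1)}$, and raising to the power $s+1$ collapses this to exactly $\Gamma(a+s) \le (a+s)^s\,\Gamma(a)$, as wanted.

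For the integer values of $p$ actually needed in the paper, log-convexity can be almost entirely avoided: when $p=2m$ is even, the identity above telescopes to $M_{2m} = n(n+2)\cdots(n+2m-2)$, a product of $m = p/2$ factors each at most $n+p$, so $M_{2m} \le (n+p)^{p/2}$; when $p = 2m+1$ is odd, the same telescoping reduces the claim to the single half-integer estimate $\Gamma(\tfrac{n+1}{2})/\Gamma(\tfrac n2) \le \sqrt{n/2}$, which is just log-convexity of $\Gamma$ at the midpoint of $\{n/2,\, n/2+1\}$. Either way the statement is classical and presents no genuine obstacle; the only place that demands a little care is the Gamma-ratio bound, where one must either pick the correct interpolation point as above or, equivalently, invoke the standard digamma estimate $\psi(x) < \log x$ to show that $s \mapsto \log\Gamma(a+s) - s\log(a+s)$ is nonincreasing in $s$. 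I would also double-check the normalization constant in the moment identity and the admissible ranges ($n\ge 1$, $p\ge 0$) so that the substitution and the functional equation are legitimate.
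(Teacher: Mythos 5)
Your proof is correct. The paper itself states this lemma without proof, citing \cite{nesterov2017random}; the argument there (which the paper echoes elsewhere, e.g.\ in the bound $x^p e^{-\tau x^2/2}\le (2/(\tau e))^{p/2}$ used in the proof of Proposition 2.1(c)) proceeds by bounding $\|u\|^p e^{-\tau\|u\|^2/2}$ pointwise, pulling out the remaining Gaussian integral to get $M_p \le (p/(\tau e))^{p/2}(1-\tau)^{-n/2}$, and then choosing $\tau = p/(n+p)$ together with $(1+p/n)^{n/2}\le e^{p/2}$. Your route is genuinely different: you compute the exact moment $M_p = 2^{p/2}\,\Gamma\bigl(\tfrac{n+p}{2}\bigr)/\Gamma\bigl(\tfrac{n}{2}\bigr)$ via the chi-squared density and reduce everything to the Gamma-ratio inequality $\Gamma(a+s)\le (a+s)^s\,\Gamma(a)$, which you correctly derive from log-convexity applied at $a+s = \tfrac{s}{s+1}(a+s+1)+\tfrac{1}{s+1}a$ plus the functional equation; the convex combination, the exponent bookkeeping, and the telescoping special cases for integer $p$ all check out. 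What your approach buys is an exact identity valid for all real $p\ge 0$ (so the bound's slack is transparent, and the even-$p$ case becomes a one-line product bound); what the Nesterov--Spokoiny approach buys is that it avoids any Gamma-function machinery and generalizes immediately to weighted integrals of the form appearing in \eqref{trick_c_2}. Either is acceptable; yours is complete and self-contained.
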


\begin{corollary}\label{corr:rmoment}
    For $u_0\sim\N(0,I_n)$ and $u=P u_0$, where $P\in\R^{n\times n}$ is the orthogonal projection matrix onto a $d$ dimensional subspace $\X$ of $\R^n$, we have $\E_{u_0}(\|u\|^p)\leq (d+p)^{p/2}$.
\end{corollary}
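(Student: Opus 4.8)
## Proof Proposal for Corollary \ref{corr:rmoment}

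The plan is to reduce the statement to Lemma \ref{lem:gaussmoment} by observing that the projection $u = Pu_0$ is supported on a $d$-dimensional subspace, so its Euclidean norm coincides with that of a $d$-dimensional standard Gaussian vector. Concretely, first I would choose an orthonormal basis $Q \in \R^{n \times d}$ of the subspace $\X$, so that $P = QQ^\top$ and $Q^\top Q = I_d$. Then $\|u\|^2 = \|Pu_0\|^2 = u_0^\top Q Q^\top u_0 = \|Q^\top u_0\|^2$. The key distributional fact is that $v := Q^\top u_0 \sim \N(0, Q^\top Q) = \N(0, I_d)$, since a linear image of a standard Gaussian is Gaussian with covariance $Q^\top I_n Q = I_d$. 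Hence $\|u\|$ has exactly the same distribution as $\|v\|$ where $v$ is a $d$-dimensional standard normal vector.

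Given this, the conclusion is immediate: for any integer $p \ge 2$,
\[
\E_{u_0}\big(\|u\|^p\big) = \E_{v}\big(\|v\|^p\big) = M_p \le (d+p)^{p/2},
\]
where the final inequality is Lemma \ref{lem:gaussmoment} applied with ambient dimension $d$ in place of $n$. That completes the argument.

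I do not anticipate any real obstacle here; the only point requiring a little care is making the dimension-reduction step rigorous, namely justifying that $Q^\top u_0$ is genuinely an $I_d$-covariance Gaussian and that $\|Pu_0\| = \|Q^\top u_0\|$ pointwise (not merely in distribution), so that all moments transfer verbatim. Everything else is a direct invocation of the preceding lemma.
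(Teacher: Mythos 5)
Your proof is correct and follows essentially the same route as the paper: both reduce $\|Pu_0\|$ to the norm of a $d$-dimensional standard Gaussian and then invoke \cref{lem:gaussmoment} in dimension $d$ (the paper uses a full $n\times n$ unitary eigen-decomposition of $P$ where you use an $n\times d$ orthonormal basis, but this is only a cosmetic difference). No gaps.
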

\begin{proof}{Proof of \cref{corr:rmoment}}
    Assume the eigen-decomposition of $P$ is $P=Q^\top\Lambda Q$, where $Q$ is an unitary matrix and $\Lambda$ is a diagonal matrix with the leading $d$ diagonal entries being 1 and other diagonal entries being 0. 
Denote $\tilde{u}=Q u_0\sim \N(0,I_n)$, then $\Lambda \tilde{u}=(\tilde{u}_1,...,\tilde{u}_d,0,...,0)$. Since $u=Q^\top \Lambda \tilde{u}$ has the same distribution as $\Lambda \tilde{u}$, we have $\E\|u\|^p=\E\|(\tilde{u}_1,...,\tilde{u}_d,0,...,0)\|^p\leq (d+p)^{p/2}$, by~\cref{lem:gaussmoment}.
\end{proof}

Now we provide the bounds on the error of our gradient estimator $g_\mu(x)$ \eqref{Gauss_oracle}. Recall that $d$ denotes the dimension of the manifold $\M$.
\begin{proposition}\label{lemma123}
    Under \cref{L-manifold-smooth}, we have
    \begin{enumerate}[noitemsep]
    \item[(a)] $\|\E_{u_0} (g_\mu(x))- \grad f(x)\| \leq \frac{\mu L_g}{2}(d+3)^{3/2}$,
    \item[(b)] $\|\grad f(x)\|^2\leq 2\|\E_{u_0} (g_\mu(x))\|^2+\frac{\mu^2}{2}L_g(d+6)^3$,
    \item[(c)] $\E_{u_0}(\|g_\mu(x)\|^2)\leq \frac{\mu^2}{2}L_g^2(d+6)^3+2(d+4)\|\grad f(x)\|^2$.
    \end{enumerate}
\end{proposition}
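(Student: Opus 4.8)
The plan is to treat the three bounds as successive refinements of the same basic idea: replace $g_\mu(x)$ by its ``exact-gradient surrogate'' $\bar g_\mu(x) := \tfrac{\langle \grad f(x),\mu u\rangle}{\mu}u = \langle \grad f(x),u\rangle u$, whose expectation is exactly $\grad f(x)$ by \cref{lemma0} (taking $\X = T_x\M$ and $x \leftarrow \grad f(x)$), and control the discrepancy using the $L_g$-retraction-smoothness from \cref{L-manifold-smooth}. Concretely, \cref{eq:lgsmoothness} applied with $\eta = \mu u \in T_x\M$ gives the pointwise bound
\begin{equation*}
\Bigl| \tfrac{f(R_x(\mu u)) - f(x)}{\mu} - \langle \grad f(x), u\rangle \Bigr| \le \tfrac{\mu L_g}{2}\|u\|^2 ,
\end{equation*}
and hence $\| g_\mu(x) - \bar g_\mu(x)\| \le \tfrac{\mu L_g}{2}\|u\|^3$ after multiplying by $\|u\|$. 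This is the single estimate that drives everything.

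For part (a), I would write $\E_{u_0}(g_\mu(x)) - \grad f(x) = \E_{u_0}\bigl(g_\mu(x) - \bar g_\mu(x)\bigr)$ since $\E_{u_0}\bar g_\mu(x) = \grad f(x)$, then apply Jensen and the pointwise bound to get $\|\E_{u_0}(g_\mu(x)) - \grad f(x)\| \le \tfrac{\mu L_g}{2}\E_{u_0}\|u\|^3 \le \tfrac{\mu L_g}{2}(d+3)^{3/2}$ by \cref{corr:rmoment} with $p = 3$. For part (b), start from $\|\grad f(x)\|^2 \le 2\|\E_{u_0}(g_\mu(x))\|^2 + 2\|\E_{u_0}(g_\mu(x)) - \grad f(x)\|^2$ via $\|a\|^2 \le 2\|b\|^2 + 2\|a-b\|^2$, then plug in part (a) and bound $(d+3)^3 \le (d+6)^3$ so that $2\cdot\bigl(\tfrac{\mu L_g}{2}\bigr)^2(d+3)^3 \le \tfrac{\mu^2 L_g^2}{2}(d+6)^3$ — I should double-check whether the intended bound has $L_g$ or $L_g^2$ as written, but the $\|a-b\|^2$ route produces $L_g^2$; likely a typo in the statement, and the proof will carry $L_g^2$. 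For part (c), use $\|g_\mu(x)\|^2 \le 2\|g_\mu(x) - \bar g_\mu(x)\|^2 + 2\|\bar g_\mu(x)\|^2$; the first term is at most $2\cdot\tfrac{\mu^2 L_g^2}{4}\|u\|^6$ whose expectation is $\le \tfrac{\mu^2 L_g^2}{2}(d+6)^3$ by \cref{corr:rmoment} with $p=6$, and for the second term $\E_{u_0}\|\bar g_\mu(x)\|^2 = \E_{u_0}\bigl(\langle \grad f(x),u\rangle^2\|u\|^2\bigr)$, which I would bound by $\le \sqrt{\E\langle\grad f(x),u\rangle^4}\sqrt{\E\|u\|^4}$ via Cauchy–Schwarz, using that $\langle\grad f(x),u\rangle$ is a mean-zero Gaussian with variance $\|\grad f(x)\|^2$ (so its fourth moment is $3\|\grad f(x)\|^4$) and $\E\|u\|^4 \le (d+4)^2$; alternatively, and more cleanly, condition on the direction of $u$ within $T_x\M$ and use the standard Gaussian computation $\E(\langle v,u\rangle^2 \|u\|^2) = (\|v\|^2 (d+2))$ for $v$ in a $d$-dimensional subspace — this directly yields the factor $(d+4)$ claimed (with room to spare, since $d+2 \le d+4$).

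The main obstacle is the second-moment computation $\E_{u_0}(\langle\grad f(x),u\rangle^2\|u\|^2)$ in part (c): one must correctly account for the correlation between $\langle\grad f(x),u\rangle$ and $\|u\|^2$ when $u$ ranges over the $d$-dimensional tangent space rather than all of $\R^n$. The clean way is to rotate coordinates (as in the proof of \cref{corr:rmoment}) so that $u$ becomes $(\tilde u_1,\dots,\tilde u_d,0,\dots,0)$ with $\tilde u_i$ i.i.d.\ standard normal, align $\grad f(x)$ with the first coordinate, and then evaluate $\E\bigl(\tilde u_1^2(\tilde u_1^2 + \cdots + \tilde u_d^2)\bigr) = \E \tilde u_1^4 + (d-1)\E\tilde u_1^2\tilde u_2^2 = 3 + (d-1) = d+2 \le d+4$. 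Everything else is routine application of \cref{eq:lgsmoothness}, \cref{corr:rmoment}, and the elementary inequality $\|a\|^2 \le 2\|b\|^2 + 2\|a-b\|^2$.
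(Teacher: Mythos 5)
Your proposal is correct, and for parts (a) and (b) it follows essentially the paper's own route: the surrogate $\bar g_\mu(x)=\langle \grad f(x),u\rangle u$ is exactly the integrand supplied by \cref{lemma0}, and both arguments reduce to the pointwise bound $|f(R_x(\mu u))-f(x)-\langle\grad f(x),\mu u\rangle|\le \tfrac{L_g}{2}\mu^2\|u\|^2$ followed by \cref{corr:rmoment}. Your diagnosis of the $L_g$ versus $L_g^2$ discrepancy in (b) is right: the paper's own derivation squares $\tfrac{L_g}{2}\mu^2\|u\|^2$ and hence produces $L_g^2$, so the single power of $L_g$ in the stated bound (and in the last line of the paper's proof) is a typo; your shortcut of plugging part (a) into $\|a\|^2\le 2\|b\|^2+2\|a-b\|^2$ even gives the slightly sharper $(d+3)^3$ in place of $(d+6)^3$. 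The one genuine divergence is in part (c), in bounding $\E\bigl(\langle\grad f(x),u\rangle^2\|u\|^2\bigr)$: the paper uses the Nesterov--Spokoiny exponential-splitting trick, inserting $e^{-\tau\|x\|^2/2}$ with $\tau=2/(d+4)$ and invoking $x^pe^{-\tau x^2/2}\le(2/(\tau e))^{p/2}$, whereas you rotate coordinates within the tangent space and compute the moment exactly, obtaining $(d+2)\|\grad f(x)\|^2\le (d+4)\|\grad f(x)\|^2$. Your computation is both simpler and tighter, and it buys an exact constant rather than an optimized upper bound. One caution: the Cauchy--Schwarz alternative you mention first does \emph{not} suffice, since $\sqrt{\E\langle\grad f(x),u\rangle^4}\,\sqrt{\E\|u\|^4}\le\sqrt{3}\,(d+4)\|\grad f(x)\|^2$ carries an extra factor of $\sqrt{3}$ and would yield $2\sqrt{3}(d+4)$ rather than the claimed $2(d+4)$; you should commit to the exact rotation argument (or the paper's trick) for that step.
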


\begin{proof}{Proof of \cref{lemma123}}
For part (a), since
    \begin{equation*}
    \begin{split}
        \E (g_\mu(x))- \grad f(x) =\frac{1}{\kappa}\int_{\R^n}\left(\frac{f(R_x(\mu u))-f(x)}{\mu} -\langle \grad f(x), u \rangle \right) u e^{-\frac{1}{2}\|u_0\|^2}d u_0,
    \end{split}
    \end{equation*}
    we have
    \begin{align*}
          & \quad \|\E (g_\mu(x))- \grad f(x)\| \\
        = & \quad \|\frac{1}{\mu\kappa}\int_{\R^n}\left(f(R_x(\mu u))-f(x) -\langle \grad f(x), \mu u \rangle \right) u e^{-\frac{1}{2}\|u_0\|^2}d u_0\| \\
        \leq & \quad \frac{1}{\mu\kappa} \int_{\R^n} \frac{L_g}{2}\|\mu u\|^2 \|u\| e^{-\frac{1}{2}\|u_0\|^2}d u_0 =  \frac{\mu L_g}{2\kappa} \int_{\R^n} \|u\|^3e^{-\frac{1}{2}\|u_0\|^2}d u_0 \leq \frac{\mu L_g}{2}(d+3)^{3/2},
    \end{align*}
    where the first inequality is by due to \eqref{eq:lgsmoothness}, and the last inequality is from \cref{corr:rmoment}. This completes the proof of part (a).

To prove part (b), note that 
\begin{align*}
        & \|\grad f(x)\|^2 = \left\|\frac{1}{\kappa}\int_{\R^n}\langle\grad f(x),u\rangle u e^{-\frac{1}{2}\|u_0\|^2}d u_0\right\|^2 \\
        = & \left\|\frac{1}{\mu\kappa}\int_{\R^n}( [f(R_x(\mu u))-f(x)] -[f(R_x(\mu u))-f(x)-\langle\grad f(x),\mu u\rangle] ) u e^{-\frac{1}{2}\|u_0\|^2}d u_0\right\|^2 \\
        \leq & 2\|\E (g_\mu(x))\|^2+\left\|\frac{2}{\mu^2}\int_{\R^n}(f(R_x(\mu u))-f(x)-\langle\grad f(x),\mu u\rangle ) u e^{-\frac{1}{2}\|u_0\|^2}d u_0\right\|^2 \\
        \leq & 2\|\E (g_\mu(x))\|^2+\frac{2}{\mu^2}\int_{\R^n}(f(R_x(\mu u))-f(x)-\langle\grad f(x),\mu u\rangle )^2\|u\|^2 e^{-\frac{1}{2}\|u_0\|^2}d u_0 \\
        \leq & 2\|\E (g_\mu(x))\|^2+\frac{\mu^2}{2}L_g (d+6)^3,
    \end{align*}
    where the last inequality is from the same trick as in part (a). This completes the proof of part (b).

Finally, we prove part (c). Since $\E(\|g_\mu(x)\|^2)=\frac{1}{\mu^2}\E_{u_0}\left[ (f(R_x(\mu u))-f(x))^2\|u\|^2 \right]$, and $(f(R_x(\mu u))-f(x))^2=(f(R_x(\mu u))-f(x)-\mu\langle\grad f(x),u\rangle+\mu\langle\grad f(x),u\rangle)^2 \leq 2(\frac{L_g}{2}\mu^2\|u\|^2)^2+2\mu^2 \langle\grad f(x),u\rangle^2$, we have
    \begin{equation}\label{trick_c_1}
        \E(\|g_\mu(x)\|^2)\leq \frac{\mu^2}{2}L_g^2\E(\|u\|^6)+2\E(\| \langle\grad f(x),u\rangle u\|^2) 
        \leq \frac{\mu^2}{2}L_g^2(d+6)^3+2\E(\| \langle\grad f(x),u\rangle u\|^2).
    \end{equation}
    Now we bound the term $\E(\| \langle\grad f(x),u\rangle u\|^2)$ using the same trick as in \cite{nesterov2017random}. Without loss of generality, assume $\X$ is the $d$-dimensional subspace generated by the first $d$ coordinates, i.e., $\forall x\in\X$, the last $n-d$ elements of $x$ are zeros. Also for brevity, denote $g=\grad f(x)$. We have that
    \begin{align*}
        \E(\| \langle\grad f(x),u\rangle u\|^2)&= \frac{1}{\kappa}\int_{\R^n}\langle\grad f(x),u\rangle^2 \|u\|^2 e^{-\frac{1}{2}\|u_0\|^2}d u_0 \\
        &=\frac{1}{\kappa(d)}\int_{\R^d}\left(\sum_{i=1}^{d}g_i x_i\right)^2 \left(\sum_{i=1}^{d}x_i^2\right) e^{-\frac{1}{2}\sum_{i=1}^{d}x_i^2}d x_1\cdots d x_d,
    \end{align*}
    where $x_i$ denotes the $i$-th coordinate of $u_0$, the last $n-d$ dimensions are integrated to be one, and $\kappa(d)$ is the normalization constant for $d$-dimensional Gaussian distribution. For simplicity, denote $x=(x_1,...,x_d)$, then
    \begin{equation}\label{trick_c_2}
        \begin{split}
            & \E(\| \langle\grad f(x),u\rangle u\|^2) = \frac{1}{\kappa(d)}\int_{\R^d}\langle g, x\rangle^2 \|x\|^2 e^{-\frac{1}{2}\|x\|^2}d x \\
        \leq & \frac{1}{\kappa(d)}\int_{\R^d}\|x\|^2 e^{-\frac{\tau}{2}\|x\|^2}\langle g, x\rangle^2 e^{-\frac{1-\tau}{2}\|x\|^2}d x \leq \frac{2}{\kappa(d)\tau e}\int_{\R^d}\langle g, x\rangle^2 e^{-\frac{1-\tau}{2}\|x\|^2}d x \\
        = &\frac{2}{\kappa(d)\tau(1-\tau)^{1+d/2} e}\int_{\R^d} \langle g, x\rangle^2 e^{-\frac{1}{2}\|x\|^2}d x =\frac{2}{\tau(1-\tau)^{1+d/2} e}\|g\|^2,
        \end{split}
    \end{equation}
    where the second inequality is due to the following fact: $x^p e^{-\frac{\tau}{2}x^2}\leq (\frac{2}{\tau e})^{p/2}$. Taking $\tau = \frac{2}{(d+4)}$ gives the desired result.
    \end{proof}


\subsection{The Zeroth-order Riemannian Hessian Estimator}\label{sec:hessianestimator}
We now extend the above methodology and propose estimators for the Riemannian Hessian in the stochastic zeroth-order setting. We restrict our discussion to compact submanifolds embedded in Euclidean space, so that the definition of Riemannian Hessian \eqref{Rhessian_embedded} is applied. We assume the following assumption of $F(x,\xi)$: 
\begin{assumption}\label{assumption_hessian}
    Given any point $x\in\M$ and $\eta\in T_x\M$, we have
    \begin{equation}\label{assumption_hessian_eq}
        \|P_{\eta}^{-1}\circ \hess F(R_x(\eta),\xi)\circ P_{\eta} - \hess F(x,\xi)\|_{\operatorname{op}}\leq L_H \|\eta\|,
    \end{equation}
    almost everywhere for $\xi$, where $P_{\eta}:T_x\M\rightarrow T_{R_x(\eta)}\M$ denotes the parallel transport \cite{agarwal2018adaptive}, an isometry from the tangent space of $x$ to the tangent space of $R_x(\eta)$, and $\circ$ is the function composition. Here $\|\cdot\|_{\operatorname{op}}$ is the operator norm in the ambient Euclidean space.
    \end{assumption}

\cref{assumption_hessian} is the analogue of the Lipschitz Hessian type assumption from the Euclidean setting, and induces the following equivalent conditions (see, also \cite{agarwal2018adaptive}):
    \begin{equation}\label{coro_hessian_assumption}
    \begin{split}
        \|P_{\eta}^{-1}\grad F(R_x(\eta),\xi) - \grad f(x)-\hess F(x,\xi)[\eta]\| & \leq \frac{L_H}{2}\|\eta\|^2 \\
        \left|F(R_{x}(\eta),\xi) - \left[ F(x,\xi) + \langle \eta, \grad F(x,\xi)\rangle+\frac{1}{2}\langle \eta,\hess F(x,\xi)[\eta] \rangle \right]\right| &\leq \frac{L_H}{6}\|\eta\|^3.
    \end{split}
    \end{equation}
In the Euclidean setting, $P_{\eta}$ reduces to the identity mapping. Throughout this section, we also assume that $F(\cdot,\xi)$ satisfies \cref{L-manifold-smooth} and the following assumption, which is used frequently in zeroth-order stochastic optimization \cite{ghadimi2013stochastic, balasubramanian2019zeroth,zhou2019faster}.
\begin{assumption}\label{sto_assumption}
We have (with $\E=\E_{\xi}$) that, $\E [F(x,\xi)]=f(x)$, $\E [\grad F(x,\xi)]=\grad f(x)$ and $\E\left[ \|\grad F(x,\xi) - \grad f(x)\|^2 \right]\leq \sigma^2$, $\forall x\in\mathcal{M}$.
\end{assumption}
We first introduce the following identity which follows immediately from the second-order Stein's identity for Gaussian distribution~\cite{stein1972bound}.

%

\begin{lemma}\label{lemma_second_order_stein}
Suppose $\X$ is a $d$-dimensional subspace of $\R^n$, with orthogonal projection matrix $P\in\R^{n\times n}$, $P=P^2=P^\top $, and  $u_0\sim\N(0,I_n)$ is a standard normal distribution and $u=P u_0$ is the orthogonal projection of $u_0$ onto the subspace. Then $\forall H\in\R^{n\times n}$, $H^\top =H$, and $H=P H P$ (which means that the eigenvectors of $H$ lies all in $\X$), we have
\begin{equation}\label{lemma_second_eq1}
    P H P = \frac{1}{2\kappa} \int_{\R^n} \langle u, H u \rangle (u u^\top -P) e^{-\frac{1}{2}\|u_0\|^2}d u_0 = \E \left[ \frac{1}{2}\langle u, H u \rangle (u u^\top -P) \right],
\end{equation}
where $\|\cdot\|$ here is the Euclidean norm on $\R^n$, and $\kappa$ is the constant for normal density function given by $\kappa := \smallint_{\R^n}e^{-\frac{1}{2}\|u\|^2}d u=(2\pi)^{n/2}$.
\end{lemma}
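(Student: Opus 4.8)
The plan is to reduce the $n$-dimensional identity to the classical second-order Stein identity for a $d$-dimensional standard Gaussian, by rotating coordinates so that $\X$ becomes the span of the first $d$ coordinates --- exactly the device used in the proof of \cref{corr:rmoment}. Write the eigendecomposition $P=Q^\top\Lambda Q$ with $Q$ orthogonal and $\Lambda=\mathrm{diag}(1,\dots,1,0,\dots,0)$ ($d$ ones), and set $\tilde u_0=Qu_0\sim\N(0,I_n)$. Then $Qu=\Lambda\tilde u_0=(v,0)$ with $v:=(\tilde u_{0,1},\dots,\tilde u_{0,d})\sim\N(0,I_d)$; moreover $QPQ^\top=\Lambda$, so $Q(uu^\top-P)Q^\top$ is block-diagonal with upper-left block $vv^\top-I_d$ and zeros elsewhere. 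Finally, since $H=H^\top=PHP$, the matrix $QHQ^\top$ is block-diagonal with an upper-left $d\times d$ block $\tilde H:=(QHQ^\top)_{1:d,1:d}$ and zeros elsewhere, so $\langle u,Hu\rangle=\langle v,\tilde Hv\rangle$. Conjugating the claimed identity by $Q$ on the left and $Q^\top$ on the right, it therefore suffices to prove $\E\big[\tfrac12\langle v,\tilde Hv\rangle(vv^\top-I_d)\big]=\tilde H$ for $v\sim\N(0,I_d)$ and $\tilde H$ symmetric.

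For this $d$-dimensional statement I would apply Gaussian integration by parts (the first-order Stein identity) twice: for smooth $\psi$ with Gaussian-integrable derivatives, $\E[v_i\psi(v)]=\E[\partial_{v_i}\psi(v)]$, and iterating gives $\E[(v_iv_j-\delta_{ij})\psi(v)]=\E[\partial_{v_i}\partial_{v_j}\psi(v)]$. Taking $\psi(v)=\langle v,\tilde Hv\rangle=\sum_{k,l}\tilde H_{kl}v_kv_l$, whose Hessian is the constant matrix $2\tilde H$ by symmetry of $\tilde H$, yields $\E[(v_iv_j-\delta_{ij})\langle v,\tilde Hv\rangle]=2\tilde H_{ij}$, i.e.\ $\E[(vv^\top-I_d)\langle v,\tilde Hv\rangle]=2\tilde H$, and dividing by $2$ finishes the reduced identity. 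An equivalent route avoiding integration by parts is to expand $\langle v,\tilde Hv\rangle$ and use Isserlis' theorem, $\E[v_iv_jv_kv_l]=\delta_{ij}\delta_{kl}+\delta_{ik}\delta_{jl}+\delta_{il}\delta_{jk}$; contracting against $\tilde H_{kl}$ and subtracting the $\delta_{ij}$ term leaves $\tilde H_{ij}+\tilde H_{ji}=2\tilde H_{ij}$. The two displayed expressions in the statement are equal by the definition of expectation against the density $\kappa^{-1}e^{-\|u_0\|^2/2}$, so nothing further is needed there.

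The calculation is entirely routine; the only point requiring care is the bookkeeping of the change of basis. One must verify that the hypotheses $P=P^2=P^\top$ and $H=H^\top=PHP$ really do force $QHQ^\top$ and $Q(uu^\top-P)Q^\top$ into the stated block-diagonal forms, so that both $\langle u,Hu\rangle$ and the matrix $uu^\top-P$ effectively live on the first $d$ coordinates and the reduced identity can be applied entrywise. This is immediate from $u=Q^\top\Lambda\tilde u_0$ and $\Lambda$ being a $0/1$ diagonal, but it is worth spelling out to make the reduction airtight. Beyond that, I anticipate no obstacles.
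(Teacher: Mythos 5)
Your proof is correct, and the key identity at its heart --- the second-order Stein identity applied to the quadratic form $\tfrac12\langle\cdot,H\,\cdot\rangle$ --- is the same one the paper uses. The mechanics differ, though. The paper's argument stays in the ambient space: it applies $\E[(x x^\top - I_n)g(x)]=\E[\nabla^2 g(x)]$ to $g(u_0)=\tfrac12\langle u_0,Hu_0\rangle$ for the full $n$-dimensional Gaussian $u_0$ (noting $\langle u_0,Hu_0\rangle=\langle u,Hu\rangle$ since $H=PHP$), obtains $\E[\tfrac12\langle u,Hu\rangle(u_0u_0^\top-I_n)]=H$, and then multiplies both sides by $P$, using $P(u_0u_0^\top-I_n)P=uu^\top-P$. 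You instead rotate by the eigenbasis of $P$ to reduce everything to a $d$-dimensional standard Gaussian and then prove the reduced Stein identity from scratch via integration by parts or Isserlis. Your route is more self-contained (it does not cite Stein's identity as a black box) and matches the reduction device of \cref{corr:rmoment}, at the cost of the change-of-basis bookkeeping you correctly flag; the paper's route is a two-line projection argument but presupposes the $n$-dimensional second-order Stein identity. Both are airtight, and your block-diagonal verifications ($QHQ^\top=\Lambda(QHQ^\top)\Lambda$ and $Q(uu^\top-P)Q^\top=\Lambda\tilde u_0\tilde u_0^\top\Lambda-\Lambda$) are exactly the checks needed to make the reduction rigorous.
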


The identity in~\eqref{lemma_second_eq1} simply follows by applying the second-order Stein's identity, $\E[(x x^\top - I_n)g(x)]=\E[\nabla^2 g(x)]$, directly to the function $g(x)=\frac{1}{2}\langle x, H x \rangle$ and multiplying the resulting identity by $P$ on both sides.

\begin{lemma}\cite{balasubramanian2019zeroth}\label{lemma_moment_second_order}
Suppose $\X$ is a $d$-dimensional subspace of $\R^n$, with orthogonal projection matrix $P\in\R^{n\times n}$, $P=P^2=P^\top $, and $u_0\sim\N(0,I_n)$ is a standard normal distribution and $u=P u_0$ is the orthogonal projection of $u_0$ onto the subspace. Then
\begin{equation}\label{lemma_moment_second_order_eq}
    \E [\|u_0u_0^\top -I_n\|_F^8]\leq 2(n+16)^8~~~\text{and}~~~    \E [\|u u^\top -P\|_F^8]\leq 2(d+16)^8.
\end{equation}
\end{lemma}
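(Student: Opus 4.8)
The plan is to reduce both inequalities to a one-dimensional moment computation: evaluate the two Frobenius norms exactly via trace identities, then expand a fourth power binomially and feed in the Gaussian moment bounds already available.

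First I would observe that, since $u_0u_0^\top - I_n$ is symmetric and $u_0u_0^\top u_0u_0^\top = \|u_0\|^2 u_0u_0^\top$,
\begin{equation*}
\|u_0u_0^\top - I_n\|_F^2 = \Tr\!\left((u_0u_0^\top - I_n)^2\right) = \|u_0\|^4 - 2\|u_0\|^2 + n = (\|u_0\|^2-1)^2 + (n-1).
\end{equation*}
In particular this quantity is nonnegative and at most $\|u_0\|^4 + n$, so $\|u_0u_0^\top - I_n\|_F^8 \le (\|u_0\|^4 + n)^4$. Expanding the right-hand side by the binomial theorem, taking expectations, and invoking \cref{lem:gaussmoment} in the form $\E[\|u_0\|^{4k}] = M_{4k}\le (n+4k)^{2k}\le (n+16)^{2k}$ for $k=0,\dots,4$ (the case $k=0$ being trivial since $M_0=1$), I get
\begin{equation*}
\E[\|u_0u_0^\top - I_n\|_F^8] \le \sum_{k=0}^{4}\binom{4}{k}\, n^{4-k}\,\E[\|u_0\|^{4k}] \le (n+16)^8\sum_{k=0}^{4}\binom{4}{k}(n+16)^{k-4} = (n+16)^8\left(1+\tfrac{1}{n+16}\right)^{4},
\end{equation*}
and since $\left(1+\tfrac{1}{n+16}\right)^{4}\le \left(\tfrac{18}{17}\right)^4 < 2$ for $n\ge 1$, the first bound follows.

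For the second bound I would run the identical argument with $u = Pu_0$, using $Pu_0 = u$, $P=P^2=P^\top$ and $\Tr(P)=d$. Expanding $\Tr\!\left((uu^\top - P)^2\right)$ and using $\Tr(uu^\top P) = u^\top P u = \|u\|^2$ and $\Tr(P^2)=\Tr(P)=d$ gives
\begin{equation*}
\|uu^\top - P\|_F^2 = \|u\|^4 - 2\|u\|^2 + d = (\|u\|^2-1)^2 + (d-1),
\end{equation*}
which is again nonnegative and at most $\|u\|^4 + d$; expanding $(\|u\|^4 + d)^4$ and applying \cref{corr:rmoment} in the form $\E[\|u\|^{4k}]\le (d+4k)^{2k}\le (d+16)^{2k}$ reproduces the previous chain verbatim with $n$ replaced by $d$, yielding $\E[\|uu^\top - P\|_F^8]\le 2(d+16)^8$.

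I do not anticipate a genuine obstacle; the only point requiring care is not being wasteful with constants. For instance, bounding $\|uu^\top - P\|_F \le \|u\|^2 + \sqrt d$ by the triangle inequality and then using a power-mean inequality $(a+b)^8 \le 2^7(a^8+b^8)$ would cost a spurious factor of $2^7$ and break the stated constant $2$; the exact trace identities above are precisely what allow one to discard only the nonnegative cross term $-2\|u\|^2$ while keeping the binomial expansion, which is what is needed to land at constant $2$.
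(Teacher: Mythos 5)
Your proof is correct, but it takes a genuinely different route from the paper's. The paper does not prove the first inequality at all — it cites it directly from \cite{balasubramanian2019zeroth} — and obtains the second from the first by an eigen-decomposition $P=Q^\top\Lambda Q$: rotation invariance of the Frobenius norm and of the Gaussian law reduces $\|uu^\top-P\|_F$ to $\|\tilde v\tilde v^\top-I_d\|_F$ with $\tilde v\sim\N(0,I_d)$, so the second bound is literally the first with $n$ replaced by $d$. You instead prove both bounds from scratch via the exact trace identities $\|u_0u_0^\top-I_n\|_F^2=\|u_0\|^4-2\|u_0\|^2+n$ and $\|uu^\top-P\|_F^2=\|u\|^4-2\|u\|^2+d$, discard the nonpositive cross term, and binomially expand $(\|u_0\|^4+n)^4$ against the moment bounds of \cref{lem:gaussmoment} and \cref{corr:rmoment}; all of these steps check out, including the comparison $n^{4-k}(n+16)^{2k}\leq (n+16)^{4+k}$ and the final estimate $\bigl(1+\tfrac{1}{n+16}\bigr)^4<2$. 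Your route has the advantage of being self-contained — it supplies a proof of the first inequality, which the paper outsources — and it even yields the slightly sharper constant $(18/17)^4\approx 1.26$ in place of $2$; the paper's reduction is shorter given the cited result and makes the $n\mapsto d$ substitution structurally transparent, which is the only point it actually needs to establish. Both arguments are valid.
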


\begin{proof}{Proof of \cref{lemma_moment_second_order}}
    See \cite{balasubramanian2019zeroth} for the proof of the first inequality in \cref{lemma_moment_second_order_eq}. We now show how to get the right part from the left. Similar to the proof of \cref{corr:rmoment}, we use an eigen-decomposition of $P=Q^T\Lambda Q$ and get (again $\tilde{u}=Q u$):
    $$
        \E \|u u^\top -P\|_F^8 = \E \|(\tilde{u}_1,...,\tilde{u}_d)^\top (\tilde{u}_1,...,\tilde{u}_d) - I_d\|_F^8\leq 2(d+16)^8,
    $$
    which completes the proof.
\end{proof}

We now propose our zeroth-order Riemannian Hessian estimator, motivated by the zeroth-order Hessian estimator in the Euclidean setting proposed by~\cite{balasubramanian2019zeroth}.
\begin{definition}[Zeroth-Order Riemannian Hessian]
    Generate $u\in T_x\M$ following a standard normal distribution on the tangent space $T_x\M$, by projection $u=P_x u_0$ as described in Section \ref{section_estimator}. Then, the zeroth-order Riemannian Hessian estimator of a function $f$ at the point $x$ is given by
    \begin{equation}\label{Gauss_oracle_second_order}
        H_\mu(x)=\frac{1}{2\mu^2}(u u^\top -P)[F(R_x(\mu u),\xi)+F(R_x(-\mu u),\xi)-2F(x,\xi)].
    \end{equation}
    Note that our Riemannian Hessian estimator is actually the Hessian estimator of the pullback function $\hat{F}_{x}(\eta,\xi)=F(R_{x}(\eta),\xi)$, $\forall x\in\M$ and $\eta\in T_x\M$ projected onto the tangent space $T_{x}\M$. 
\end{definition}
We immediately have the following bound on the variance of $H_\mu(x)$.
\begin{lemma}\label{lemma_bound_var_H}
Under \cref{L-manifold-smooth}, the Riemannian Hessian estimator given in \cref{Gauss_oracle_second_order} satisfies
    \begin{equation}\label{hessian_norm_bound}
        \E_{\mathcal{U},\Xi} \|H_{\mu}(x)\|_F^4 \leq \frac{(d+16)^8}{8}L_g^2.
    \end{equation}
\end{lemma}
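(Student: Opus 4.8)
The plan is to bound $\|H_\mu(x)\|_F$ pointwise (in $u_0$ and $\xi$) by a product of $\|u\|^2$ and $\|uu^\top-P\|_F$, and then pass to the fourth moment via the Cauchy--Schwarz inequality together with the moment bounds already established in \cref{corr:rmoment} and \cref{lemma_moment_second_order}. Concretely, write $\Delta:=F(R_x(\mu u),\xi)+F(R_x(-\mu u),\xi)-2F(x,\xi)$, so that $H_\mu(x)=\frac{\Delta}{2\mu^2}(uu^\top-P)$ and hence $\|H_\mu(x)\|_F=\frac{|\Delta|}{2\mu^2}\|uu^\top-P\|_F$. The main step is to control $|\Delta|$. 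Since $F(\cdot,\xi)$ satisfies \cref{L-manifold-smooth} for almost every $\xi$ with constant $L_g$ (and globally in $\eta$, as $\M$ is compact with globally defined retraction), applying \eqref{eq:lgsmoothness} to $F(\cdot,\xi)$ at $\eta=\mu u$ and at $\eta=-\mu u$ gives $|F(R_x(\mu u),\xi)-F(x,\xi)-\mu\langle\grad F(x,\xi),u\rangle|\le\frac{L_g}{2}\mu^2\|u\|^2$ and $|F(R_x(-\mu u),\xi)-F(x,\xi)+\mu\langle\grad F(x,\xi),u\rangle|\le\frac{L_g}{2}\mu^2\|u\|^2$. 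Adding these two inequalities, the first-order terms cancel and the triangle inequality yields $|\Delta|\le L_g\mu^2\|u\|^2$; thus $\|H_\mu(x)\|_F\le\frac{L_g}{2}\|u\|^2\|uu^\top-P\|_F$ pointwise, a bound independent of $\mu$ and $\xi$.

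Raising to the fourth power and taking expectation over $u_0$ and $\xi$ gives $\E\|H_\mu(x)\|_F^4\le\frac{L_g^4}{16}\E\big[\|u\|^8\|uu^\top-P\|_F^4\big]\le\frac{L_g^4}{16}\big(\E\|u\|^{16}\big)^{1/2}\big(\E\|uu^\top-P\|_F^8\big)^{1/2}$, using Cauchy--Schwarz in the last step. Then \cref{corr:rmoment} with $p=16$ gives $\E\|u\|^{16}\le(d+16)^8$, while \cref{lemma_moment_second_order} gives $\E\|uu^\top-P\|_F^8\le2(d+16)^8$, so $\E\|H_\mu(x)\|_F^4\le\frac{\sqrt2}{16}L_g^4(d+16)^8\le\frac{(d+16)^8}{8}L_g^4$ since $\sqrt2/16<1/8$. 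I do not expect a serious obstacle here: the only points requiring care are the antithetic cancellation of the first-order terms in $\Delta$ --- which is precisely why the estimator \eqref{Gauss_oracle_second_order} uses a symmetric (central) difference --- and the observation that the $L_g$-retraction-smoothness estimate for $F(\cdot,\xi)$ holds $\xi$-almost surely, so it may be invoked before integrating in $\xi$. (Carried out this way the argument produces $L_g^4$; the exponent $2$ in the displayed statement appears to be a typo.)
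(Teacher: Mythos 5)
Your proof is correct and follows essentially the same route as the paper's: the antithetic cancellation of the first-order terms via \cref{L-manifold-smooth} to get $|\Delta|\le L_g\mu^2\|u\|^2$, followed by Cauchy--Schwarz and the moment bounds of \cref{corr:rmoment} and \cref{lemma_moment_second_order}; the paper merely applies Cauchy--Schwarz to $|\Delta|^4$ and $\|uu^\top-P\|_F^4$ directly rather than to $\|u\|^8$ and $\|uu^\top-P\|_F^4$ after the pointwise bound. Your remark about the exponent is also right: the paper's own computation yields $\frac{(d+16)^8}{8}L_g^4$, so the $L_g^2$ in the displayed statement is indeed a typo (which then propagates as a harmless constant-factor discrepancy into \cref{lemma_var_hessian_approx}).
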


\begin{proof}{Proof of \cref{lemma_bound_var_H}}
From \cref{L-manifold-smooth} and \cref{corr:rmoment} we have
    \begin{equation}\label{lem6.5-eq1}
    \begin{split}
        &\E|F(R_x(\mu u),\xi)+F(R_x(-\mu u),\xi)-2F(x,\xi)|^8 \\
        =& \E|F(R_x(\mu u),\xi)-F(x,\xi)-\langle\grad F(x,\xi),\mu u\rangle +F(R_x(-\mu u),\xi)-F(x,\xi) -\langle\grad F(x,\xi),-\mu u\rangle|^8 \\
        \leq& \E [\frac{\mu^2L_g}{2}\|u\|^2+\frac{\mu^2L_g}{2}\|u\|^2]^8 = \E [\mu^{16}L_g^8 \|u\|^{16}]\leq \mu^{16} L_g^8(d+16)^8.
    \end{split}
    \end{equation}
Moreover, we have 
    \begin{equation}\label{lem6.5-eq2}
    \begin{split}
        \E \|H_{\mu}(x)\|_F^4 = & \E \left\|\frac{1}{2\mu^2}(u u^\top -P)[F(R_x(\mu u),\xi)+F(R_x(-\mu u),\xi)-2F(x,\xi)]\right\|_F^4 \\
        \leq & \frac{1}{16\mu^8}\left( \E|F(R_x(\mu u),\xi)+F(R_x(-\mu u),\xi)-2F(x,\xi)|^8 \E \|u u^\top  - P\|^8 \right)^{1/2} \\
        \leq & \frac{(d+16)^4}{8\mu^8} \left( \E|F(R_x(\mu u),\xi)+F(R_x(-\mu u),\xi)-2F(x,\xi)|^8 \right)^{1/2},
    \end{split}
    \end{equation}
    where the first inequality is by H\"{o}lder's inequality and the second one is by \cref{lemma_moment_second_order}.  Combining \eqref{lem6.5-eq1} and \eqref{lem6.5-eq2} yields the desired result \eqref{hessian_norm_bound}.
\end{proof}

We will also use the mini-batch multi-sampling technique. For $i=1,...,b$, denote each Hessian estimator as
\begin{equation}
    H_{\mu,i}(x)=\frac{1}{2\mu^2}(u_i u_i^\top -P)[F(R_x(\mu u_i),\xi_i)+F(R_x(-\mu u_i),\xi_i)-2F(x,\xi_i)].
\end{equation}
The averaged Hessian estimator is given by
\begin{equation}\label{multi_sample_hessian}
    \bar{H}_{\mu,\xi}(x)=\frac{1}{b}\sum_{i=1}^{b}H_{\mu,i}(x).
\end{equation}
We now have the following bound of $\bar{H}_{\mu,\xi}(x)$ and $\hess f(x)$.

\begin{lemma}\label{lemma_var_hessian_approx}
Under \cref{L-manifold-smooth} and \cref{assumption_hessian}, let $\bar{H}_{\mu,\xi}(x)$ be calculated as in \cref{multi_sample_hessian}, then we have that: $\forall x\in\M$ and $\forall \eta\in T_x\M$,
    \begin{equation}\label{zohessian_approx_square_moment}
        \E_{\mathcal{U},\Xi} \|\bar{H}_{\mu,\xi}(x) - \hess f(x)\|_{\operatorname{op}}^2 \leq  \frac{(d+16)^4}{\sqrt{2}b}L_g + \frac{\mu^2 L_H^2}{18}(d+6)^5,
    \end{equation}
    \begin{equation}\label{zohessian_approx_third_moment}
        \E_{\mathcal{U},\Xi} \|\bar{H}_{\mu,\xi}(x) - \hess f(x)\|_{\operatorname{op}}^3 \leq \tilde{C}\frac{(d+16)^6}{b^{3/2}}L_g^{1.5} + \frac{1}{27}\mu^3L_H^3 (d+6)^{7.5} ,
    \end{equation}
    where $\|\cdot\|_{op}$ denotes the operator norm and $\tilde{C}$ is some absolute constant.
\end{lemma}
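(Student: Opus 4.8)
The plan is to split the error $\bar{H}_{\mu,\xi}(x)-\hess f(x)$ into a stochastic/sampling part and a bias part via the triangle inequality, namely
\[
\bar{H}_{\mu,\xi}(x)-\hess f(x) = \bigl(\bar{H}_{\mu,\xi}(x)-\E_{\mathcal{U},\Xi}\bar{H}_{\mu,\xi}(x)\bigr) + \bigl(\E_{\mathcal{U},\Xi}\bar{H}_{\mu,\xi}(x)-\hess f(x)\bigr),
\]
and control each term separately, then recombine using $\|a+b\|^2\le 2\|a\|^2+2\|b\|^2$ and $\|a+b\|^3\le 4\|a\|^3+4\|b\|^3$. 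For the bias term, I would first compute $\E_{u_0}\bigl[\frac{1}{2\mu^2}(uu^\top-P)(F(R_x(\mu u),\xi)+F(R_x(-\mu u),\xi)-2F(x,\xi))\bigr]$ for a fixed $\xi$. Writing the second difference quotient via the second-order expansion in \eqref{coro_hessian_assumption} (the scalar inequality with the $\frac{L_H}{6}\|\eta\|^3$ remainder), the leading term is $\langle u,\hess F(x,\xi)[u]\rangle$, and by Lemma \ref{lemma_second_order_stein} applied to $H=\hess F(x,\xi)$ (which satisfies $H=PHP$ since $\hess F$ maps into and is evaluated on $T_x\M$), its expectation against $\frac{1}{2}(uu^\top-P)$ is exactly $\hess F(x,\xi)=\proj_{T_x\M}(\hess F(x,\xi))$. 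Taking $\E_\xi$ and using \cref{assumption_hessian}/\cref{sto_assumption} gives $\hess f(x)$, so the bias is entirely the contribution of the cubic remainder: bounded in operator norm by $\frac{1}{2\mu^2}\cdot\frac{L_H}{6}\mu^3\|u\|^3\cdot\|uu^\top-P\|_{\mathrm{op}} \le \frac{\mu L_H}{12}\|u\|^4$ (using $\|uu^\top-P\|_{\mathrm{op}}\le\|u\|^2$, which holds because $uu^\top-P$ restricted to $T_x\M$ has eigenvalues $\|u\|^2-1$ and $-1$). Then $\E_{u_0}\|u\|^4\le(d+4)^2$ and $\E_{u_0}\|u\|^8\le(d+8)^4$ from \cref{corr:rmoment} furnish $\mathrm{bias}^2\lesssim \mu^2L_H^2(d+6)^5$ and $\mathrm{bias}^3\lesssim \mu^3L_H^3(d+6)^{7.5}$; the stated constants $\tfrac{1}{18}$ and $\tfrac{1}{27}$ should drop out after tracking the powers of $2$ and $6$ carefully (absorbing cross terms into $(d+6)$).

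For the stochastic term, since $\bar{H}_{\mu,\xi}$ is an average of $b$ i.i.d. copies of $H_{\mu,i}$, I would use a variance-reduction estimate: $\E\|\bar{H}_{\mu,\xi}-\E\bar{H}_{\mu,\xi}\|_F^2 = \frac{1}{b}\E\|H_{\mu,1}-\E H_{\mu,1}\|_F^2 \le \frac{1}{b}\E\|H_{\mu,1}\|_F^2$, and for the third moment a similar $b^{-3/2}$-type bound (e.g. via $\E\|\bar H-\E\bar H\|_F^3 \le (\E\|\bar H - \E\bar H\|_F^2)^{3/4}(\E\|\bar H-\E\bar H\|_F^6)^{1/4}$ combined with a Marcinkiewicz–Zygmund / Rosenthal inequality for sums of independent mean-zero matrices, which yields the factor $b^{-3/2}$ and the absolute constant $\tilde C$). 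The single-sample moments come straight from \cref{lemma_bound_var_H}: $\E\|H_{\mu,1}\|_F^2\le(\E\|H_{\mu,1}\|_F^4)^{1/2}\le \frac{(d+16)^4}{2\sqrt2}L_g$, giving the $\frac{(d+16)^4}{\sqrt2 b}L_g$ term; pushing to the fourth power of $H$ (or re-running the H\"older computation in the proof of \cref{lemma_bound_var_H} with an eighth-power estimate) gives $\E\|H_{\mu,1}\|_F^3\lesssim (d+16)^6 L_g^{1.5}$, hence the $\tilde C (d+16)^6 b^{-3/2}L_g^{1.5}$ term. Throughout I use $\|\cdot\|_{\mathrm{op}}\le\|\cdot\|_F$ to pass from the Frobenius bounds to the operator-norm bounds claimed in the statement.

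The main obstacle I anticipate is the third-moment bound \eqref{zohessian_approx_third_moment}: getting the clean $b^{-3/2}$ scaling with an absolute constant requires a genuine concentration/moment inequality for sums of independent mean-zero random matrices (Rosenthal-type), rather than the naive $\E\|\sum(H_i-\E H_i)\|^3 \le b^2\,\mathbb{E}\|H_1-\mathbb{E}H_1\|^3$ one gets from the triangle inequality, which would only give $b^{-1}$. A secondary nuisance is bookkeeping the numerical constants so that the remainder genuinely collapses to $\tfrac{1}{18}$ and $\tfrac{1}{27}$ with all lower-order-in-$d$ terms absorbed into $(d+6)^5$ and $(d+6)^{7.5}$; this is routine but must be done with care about which Gaussian moment bound ($(d+p)^{p/2}$) is invoked at each step and about the factor-of-two splittings. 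Verifying that $\hess F(x,\xi)=P\,\hess F(x,\xi)\,P$ so that Lemma \ref{lemma_second_order_stein} applies verbatim is immediate from Definition \ref{Rhessian_embedded} (the Riemannian Hessian is self-adjoint on $T_x\M$ and projected onto $T_x\M$), but should be stated explicitly.
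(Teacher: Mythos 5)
Your overall strategy is the same as the paper's: a bias--variance split of $\bar H_{\mu,\xi}(x)-\hess f(x)$, identification of the bias with the cubic remainder after the Stein/quadratic-form term cancels exactly against $\hess f(x)$, the i.i.d.\ cross-term cancellation $\E\|\tfrac1b\sum X_i\|_F^2=\tfrac1b\E\|X_1\|_F^2$ for the second moment, and a Rosenthal-type inequality to extract the $b^{-3/2}$ rate for the third moment (you correctly diagnose that the triangle inequality alone cannot produce this rate, which is indeed the one genuinely nontrivial ingredient). Two technical points in your sketch would need repair, though neither changes the architecture of the argument.

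First, the pointwise bound $\|uu^\top-P\|_{\operatorname{op}}\le\|u\|^2$ is false when $\|u\|<1$: the eigenvalues of $uu^\top-P$ on $T_x\M$ are $\|u\|^2-1$ and $-1$ (multiplicity $d-1$), so $\|uu^\top-P\|_{\operatorname{op}}=\max\{|\|u\|^2-1|,1\}$, which exceeds $\|u\|^2$ on the event $\|u\|<1$. You can patch this with $\max\{\|u\|^2,1\}\le \|u\|^2+1$, but the paper sidesteps it entirely by applying Cauchy--Schwarz to $\E[\|u\|^3\,\|uu^\top-P\|_{\operatorname{op}}]\le\sqrt{\E\|u\|^6\;\E\|uu^\top-P\|_F^2}$, which lands directly on $(d+6)^{5/2}$ and hence the constant $\tfrac{1}{18}$ after squaring and doubling. (Also watch the factor of two: the symmetric difference contributes two cubic remainders, one for $+\mu u$ and one for $-\mu u$, so the bias integrand is $\tfrac{\mu L_H}{6}\|u\|^3\|uu^\top-P\|_{\operatorname{op}}$, not half of that.) Second, your interpolation $\E\|\cdot\|^3\le(\E\|\cdot\|^2)^{3/4}(\E\|\cdot\|^6)^{1/4}$ forces you to control the \emph{sixth} moment of $H_{\mu,1}$, which is not established anywhere (Lemma~\ref{lemma_bound_var_H} only gives the fourth moment) and would require redoing that lemma with twelfth-power Gaussian moment bounds. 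The paper instead uses $\E\|\cdot\|^3\le\sqrt{\E\|\cdot\|^2\,\E\|\cdot\|^4}$, so that Rosenthal applied at the fourth power plus Lemma~\ref{lemma_bound_var_H} is exactly enough; it yields $(b^{-1})^{1/2}(b^{-2})^{1/2}=b^{-3/2}$ with the same scaling. I recommend adopting that version so the proof closes with the moment bounds already available.
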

\begin{proof}{Proof of \cref{lemma_var_hessian_approx}}
    Denote $\E=\E_{\mathcal{U},\Xi}$ as the expectation with respect to all previous random variables. We first show \cref{zohessian_approx_square_moment}.  Denote $X_i=H_{\mu,i}-\E H_{\mu,i}$, then $X_i$'s are iid zero-mean random matrices. Since $\|\cdot\|_{\operatorname{op}}\leq\|\cdot\|_F$, we have 
    \begin{equation}\label{lem6.6-eq0}
        \begin{split}
           & \E \|\bar{H}_{\mu,\xi}(x) - \E\bar{H}_{\mu,\xi}(x)\|_{\operatorname{op}}^2 =\E \left\|\frac{1}{b}\sum_{i=1}^{b} X_i\right\|_{\operatorname{op}}^2 \leq \E \left\|\frac{1}{b}\sum_{i=1}^{b} X_i\right\|_{F}^2 \\
            =&\E\left[\frac{1}{b^2}\sum_{i=1}^{b}\| X_i\|_{F}^2 + \frac{1}{b^2}\sum_{i\not=j} \langle X_i,X_j\rangle \right] =\E\left[\frac{1}{b^2}\sum_{i=1}^{b}\| X_i\|_{F}^2\right] \\
           = & \E \frac{1}{b^2}b\| X_1\|_{F}^2=\E \frac{1}{b}\| H_{\mu,1}-\E H_{\mu,1}\|_{F}^2 =\frac{1}{b}\E \left[ \| H_{\mu,1}\|_{F}^2 - \|\E H_{\mu,1}\|_{F}^2 \right] \\ \leq & \frac{1}{b}\E \| H_{\mu,1}\|_{F}^2  \leq \frac{1}{b}\sqrt{\E\|H_{\mu,1}(x)\|_{F}^4} \leq  \frac{(d+16)^4}{2\sqrt{2}b}L_g,
        \end{split}
    \end{equation}
    where the third inequality is from the Jensen's inequality, and the last inequality is due to \cref{hessian_norm_bound}. Note that
    \eqref{lem6.6-eq0} immediately implies 
    \begin{equation}\label{lem6.6-eq1} 
    \begin{split}
     \E \|\bar{H}_{\mu,\xi}(x) - \hess f(x)\|_{\operatorname{op}}^2 \leq  & 2\E \|\bar{H}_{\mu,\xi}(x) - \E\bar{H}_{\mu,\xi}(x)\|_{\operatorname{op}}^2 + 2 \|\E\bar{H}_{\mu,\xi}(x) - \hess f(x)\|_{\operatorname{op}}^2  \\
       \leq &  \frac{(d+16)^4}{\sqrt{2}b}L_g + 2 \|\E\bar{H}_{\mu,\xi}(x) - \hess f(x)\|_{\operatorname{op}}^2.
       \end{split}
    \end{equation}
    
    Now we bound the term $\|\E\bar{H}_{\mu,\xi}(x) - \hess f(x)\|_{\operatorname{op}}^2$. Note that
    \begin{equation*}
    \begin{split}
        &|\langle \eta, (\E H_{\mu,i}(x) - \hess f(x))[\eta]\rangle|\\
        =& \left|\langle \eta, \left(\E\left[ \frac{1}{2\mu^2}(u u^\top -P)[f(R_x(\mu u))+f(R_x(-\mu u))-2f(x)]\right] - \hess f(x) \right)[\eta] \rangle\right| \\
        =&\left|\langle \eta,\left( \E\left[ \frac{1}{2\mu^2}(u u^\top -P)[f(R_x(\mu u)) +f(R_x(-\mu u))-2f(x) -\mu^2\langle u, \hess f(x)[u] \rangle]\right] \right)[\eta] \rangle\right| \\
        =& \frac{1}{2\mu^2}\left|\langle \eta,\left( \E\left[ [f(R_x(\mu u))-f(x)- \frac{\mu^2}{2}\langle u, \hess f(x)[u] \rangle \right.\right.\right.\\ 
        & \left.\left.\left. + f(R_x(-\mu u)) - f(x) -\frac{\mu^2}{2}\langle u, \hess f(x)[u] \rangle](u u^\top -P)\right] \right)[\eta] \rangle\right|,
    \end{split}
    \end{equation*}
    which together with \cref{assumption_hessian} yields 
    \begin{equation}\label{lem6.6-eq3}
    \begin{split}
    & |\langle \eta, (\E H_{\mu,i}(x) - \hess f(x))[\eta]\rangle| \leq  \frac{\mu L_H}{6}\E \left[\|u\|^3\|u u^\top -P\|_{\operatorname{op}}\right]\|\eta\|^2 \\ (\text{H\"{o}lder})\leq &  \frac{\mu L_H}{6}\sqrt{\E\|u\|^6 \E\|u u^\top -P\|_{F}^2} \|\eta\|^2\leq \frac{\mu L_H}{6}(d+6)^{5/2}\|\eta\|^2,
    \end{split}
    \end{equation}
    where the last inequality is by \cref{corr:rmoment} and \cref{lemma_moment_second_order}. \eqref{lem6.6-eq3} implies 
    \begin{equation}\label{lem6.6-eq3.5}
    \|\E\bar{H}_{\mu,\xi}(x) - \hess f(x)\|_{\operatorname{op}}\leq \frac{\mu L_H}{6}(d+6)^{5/2}.
    \end{equation}
    Combining \eqref{lem6.6-eq1} and \eqref{lem6.6-eq3.5} gives \cref{zohessian_approx_square_moment}.
    
    Now we show \cref{zohessian_approx_third_moment}. By a similar analysis we have
    \begin{equation}\label{lem6.6-eq4}
    \begin{split}
        &\E \|\bar{H}_{\mu,\xi}(x) - \hess f(x)\|_{\operatorname{op}}^3 \\
        \leq& \E (\|\bar{H}_{\mu,\xi}(x) - \E\bar{H}_{\mu,\xi}(x)\|_{\operatorname{op}} + \|\E\bar{H}_{\mu,\xi}(x) - \hess f(x)\|_{\operatorname{op}})^3 \\
        \leq & 8\E \|\bar{H}_{\mu,\xi}(x) - \E\bar{H}_{\mu,\xi}(x)\|_{\operatorname{op}}^3 + 8  \|\E\bar{H}_{\mu,\xi}(x) - \hess f(x)\|_{\operatorname{op}}^3  \\
        (\text{H\"{o}lder})\leq & 8\sqrt{\E \|\bar{H}_{\mu,\xi}(x) - \E\bar{H}_{\mu,\xi}(x)\|_{\operatorname{op}}^2\E \|\bar{H}_{\mu,\xi}(x) - \E\bar{H}_{\mu,\xi}(x)\|_{\operatorname{op}}^4} \\ & +8  \|\E\bar{H}_{\mu,\xi}(x) - \hess f(x)\|_{\operatorname{op}}^3,
    \end{split}
    \end{equation}
    where the second inequality is by the following fact: when $a,b\geq0$, $(a+b)^3\leq\max\{(2a)^3,(2b)^3\}\leq 8a^3+8b^3$. 
    Moreover, since $\|\cdot\|_{\operatorname{op}}\leq\|\cdot\|_F$, and $X_i=H_{\mu,i}-\E H_{\mu,i}$ are iid zero-mean random matrices, we have 
    \begin{equation}\label{lem6.6-eq5}
    \begin{split}
        &\E \|\bar{H}_{\mu,\xi}(x) - \E \bar{H}_{\mu,\xi}(x)\|_{\operatorname{op}}^4 = \E \|\frac{1}{b}\sum_{i=1}^{b}X_i\|_{\operatorname{op}}^4 \leq \frac{C}{b^4}\left( \E \|\sum_{i=1}^{b}X_i\|_{\operatorname{op}} + (b \E \|X_i\|_{\operatorname{op}}^4)^{1/4} \right)^4 \\
       \leq & \frac{C}{b^4}\left( \sqrt{ \E \|\sum_{i=1}^{b} X_i\|_{F}^2} + (b \E \|X_i\|_{F}^4)^{1/4} \right)^4=\frac{C}{b^4}\left( \sqrt{ \sum_{i=1}^{b}\E \| X_i\|_{F}^2} + (b \E \|X_i\|_{F}^4)^{1/4} \right)^4\\
       =&\frac{C}{b^4}\left( \sqrt{b}\sqrt{\E \| X_1\|_{F}^2} + (b \E \|X_1\|_{F}^4)^{1/4} \right)^4 \leq\frac{C}{b^4}\left( \sqrt{b}\sqrt[4]{\E \| X_1\|_{F}^4} + (b \E \|X_1\|_{F}^4)^{1/4} \right)^4 \\
        =&\frac{C}{b^4}(\sqrt{b}+\sqrt[4]{b})^4 \E \|H_{\mu,1}-\E H_{\mu,1}\|_{F}^4 \leq \frac{16C}{b^2} \E \|H_{\mu,1}-\E H_{\mu,1}\|_{F}^4 \\
        =& \frac{16C}{b^2} \E (\|H_{\mu,1}\|_{F}^2-2\langle H_{\mu,1}, \E H_{\mu,1} \rangle+\|\E H_{\mu,1}\|_{F}^2)^2 \\
        \leq &\frac{16C}{b^2} \E (\|H_{\mu,1}\|_{F}^2+2\|H_{\mu,1}\|_{F} \|\E H_{\mu,1}\|_{F}+\|\E H_{\mu,1}\|_{F}^2)^2 \\
       \leq & \frac{16C}{b^2} \E (2\|H_{\mu,1}\|_{F}^2+2\|\E H_{\mu,1}\|_{F}^2)^2 \leq\frac{16C}{b^2} \E (2\|H_{\mu,1}\|_{F}^2+2\E\| H_{\mu,1}\|_{F}^2)^2 \\
     \leq &\frac{64C}{b^2} (\E \|H_{\mu,1}\|_{F}^4 + \E \|H_{\mu,1}\|_{F}^4) \leq \frac{128C}{b^2}(d+16)^8 L_g^2,
    \end{split}
    \end{equation}
    where the first inequality is due to the Rosenthal inequality~\cite{rio2009moment}, $C$ is an absolute constant, the fourth inequality is due to the fact $1\leq \sqrt[4]{b}\leq\sqrt{b}$. Plugging \cref{lem6.6-eq0}, \cref{lem6.6-eq3.5} and \cref{lem6.6-eq5} back to \cref{lem6.6-eq4} gives the desired result \eqref{zohessian_approx_third_moment}.
\end{proof}

\section{Stochastic Zeroth-order Riemannian Optimization Algorithms}\label{sec:algorithms}
We now demonstrate the applicability of the developed Riemannian derivative estimation methodology in Section~\ref{sec:prelim}, for various classes of stochastic zeroth-order Riemannian optimization algorithms. 

\subsection{Zeroth-order Smooth Riemannian Optimization}

In this section, we focus on the smooth optimization problem with $h\equiv 0$ and $f$ satisfying \cref{L-manifold-smooth}. We propose \texttt{ZO-RGD}, the zeroth-order Riemannian gradient descent method and provide its complexity analysis. The algorithm is formally presented in \cref{algorithm1}. 

\begin{algorithm}[ht]
   \caption{Zeroth-Order Riemannian Gradient Descent (\texttt{ZO-RGD})}
   \label{algorithm1}
\begin{algorithmic}[1]
   \STATE {\bfseries Input:} Initial point $x_0\in\M$, smoothing parameter $\mu$, step size $\eta_k$, fixed number of iteration $N$.
   \FOR{$k=0$ {\bfseries to} $N-1$}
   \STATE Sample a standard Gaussian random vector $u_k\in T_{x_k}\M$ by orthogonal projection in \cref{def_zero_rgrad}.
   \STATE Compute the zeroth-order gradient $g_\mu(x_k)$ by \cref{Gauss_oracle}.
   \STATE Update $x_{k+1}=R_{x_k}(-\eta_k g_\mu (x_k))$.
   \ENDFOR
\end{algorithmic}
\end{algorithm}

The following theorem gives the iteration and oracle complexities of \cref{algorithm1} for obtaining an $\epsilon$-stationary point of \eqref{problem} when $h\equiv 0$. 

\begin{theorem}\label{thm1}
    Let $f$ satisfy \cref{L-manifold-smooth} and suppose $\{x_k\}$ is the sequence generated by \cref{algorithm1} with the stepsize $\eta_k=\hat{\eta}=\frac{1}{2(d+4)L_g}$. Then, we have
    \begin{equation}\label{thm1-eq1}
        \begin{split}
            \hspace{-0.2in}\frac{1}{N+1}\sum_{k=0}^{N}&\E_{\mathcal{U}_k}\|\grad f(x_k)\|^2\leq  \frac{4}{\hat{\eta}}\left(\frac{f(x_0)-f(x^*)}{N+1}+C(\mu)\right),
        \end{split}
    \end{equation}
    where $\mathcal{U}_k$ denotes the set of all Gaussian random vectors we drew for the first $k$ iterations \footnote{The notation of taking the expectation w.r.t. a set, is to take the expectation for each of the elements in the set.}, and $C(\mu)=\frac{\mu^2 L_g}{16}\frac{(d+3)^3}{(d+4)}+\frac{\mu^2 }{16}\frac{(d+6)^3}{(d+4)}+\frac{\mu^2 L_g}{16}\frac{(d+6)^3}{(d+4)^2}$. In order to have
    \begin{equation}\label{thm1-eq2}
        \frac{1}{N+1}\sum_{k=0}^{N}\E_{\mathcal{U}_k}\|\grad f(x_k)\|^2\leq \epsilon^2,
    \end{equation}
    we need the smoothing parameter $\mu$ and number of iteration $N$ (which is also the number of calls to the zeroth-order oracle) to be set as
        $\mu=\mathcal{O}\left({\epsilon}/{d^{3/2}}\right),\ N = \mathcal{O}\left({d}/{\epsilon^2}\right).$
\end{theorem}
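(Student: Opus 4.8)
The plan is to derive a one-step descent inequality in conditional expectation and then telescope. First I would apply \cref{L-manifold-smooth} to the update $x_{k+1}=R_{x_k}(-\eta_k g_\mu(x_k))$, taking $\eta=-\eta_k g_\mu(x_k)\in T_{x_k}\M$, to get
\[
f(x_{k+1})\le f(x_k)-\eta_k\langle \grad f(x_k),g_\mu(x_k)\rangle+\frac{L_g\eta_k^2}{2}\|g_\mu(x_k)\|^2 .
\]

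Next, I would condition on $x_k$ (equivalently, on the Gaussian vectors $\mathcal{U}_{k-1}$ drawn so far) and average over the fresh vector $u_k$. For the cross term, use the polarization identity $\langle \grad f(x_k),\E_{u_k}g_\mu(x_k)\rangle=\tfrac12\|\grad f(x_k)\|^2+\tfrac12\|\E_{u_k}g_\mu(x_k)\|^2-\tfrac12\|\grad f(x_k)-\E_{u_k}g_\mu(x_k)\|^2$; bound the last term with the bias estimate \cref{lemma123}(a), bound the middle term from below via \cref{lemma123}(b), and bound the second-moment term $\E_{u_k}\|g_\mu(x_k)\|^2$ via \cref{lemma123}(c). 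The key structural fact is that the step size $\hat\eta=\tfrac{1}{2(d+4)L_g}$ makes $L_g\hat\eta^2(d+4)=\hat\eta/2$, so the positive $\|\grad f(x_k)\|^2$-contribution coming from part (c) cancels the $-\tfrac{\hat\eta}{2}\|\grad f(x_k)\|^2$ produced by polarization, and the $\|\grad f(x_k)\|^2$-term surviving from part (b) leaves a net coefficient of $-\hat\eta/4$, i.e. $-\tfrac{\hat\eta}{2}-\tfrac{\hat\eta}{4}+\tfrac{\hat\eta}{2}=-\tfrac{\hat\eta}{4}$. Collecting all the remaining $\mu$-dependent residuals, one checks that they sum to exactly $\hat\eta\,C(\mu)$, yielding $\E_{u_k}f(x_{k+1})\le f(x_k)-\tfrac{\hat\eta}{4}\|\grad f(x_k)\|^2+\hat\eta\,C(\mu)$.

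Finally I would rearrange this to $\tfrac{\hat\eta}{4}\|\grad f(x_k)\|^2\le f(x_k)-\E_{u_k}f(x_{k+1})+\hat\eta\,C(\mu)$, take full expectation over $\mathcal{U}_k$, sum over $k=0,\dots,N$, use the telescoping and $\E f(x_{N+1})\ge f(x^*)$, and divide by $N+1$ to obtain \eqref{thm1-eq1}. For the complexity claim, note $4/\hat\eta=8(d+4)L_g=\mathcal{O}(d)$ while the dominant term of $C(\mu)$ is of order $\mu^2 d^2$, so $\tfrac{4}{\hat\eta}C(\mu)=\mathcal{O}(\mu^2 d^3)$; choosing $\mu=\mathcal{O}(\epsilon/d^{3/2})$ forces this term below $\epsilon^2/2$, and then $N=\mathcal{O}(d/\epsilon^2)$ forces the optimization-error term $\tfrac{4}{\hat\eta}\cdot\tfrac{f(x_0)-f(x^*)}{N+1}$ below $\epsilon^2/2$, giving \eqref{thm1-eq2}.

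The main obstacle I anticipate is the bookkeeping in the second step: carefully tracking all three $\mu$-dependent residual terms through the multiplications by $\eta_k$ and $\eta_k^2$, and confirming the exact algebraic cancellation that depends on the precise value of $\hat\eta$ — a slip there would change the constants in $C(\mu)$ and hence the required scaling of $\mu$.
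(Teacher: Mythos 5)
Your approach is exactly the paper's: apply \cref{L-manifold-smooth} to the retraction step, take conditional expectation over $u_k$, complete the square on the cross term, invoke parts (a), (b), (c) of \cref{lemma123}, and use $\hat\eta=\frac{1}{2(d+4)L_g}$ so that the $\|\grad f(x_k)\|^2$ contribution from part (c) equals $\frac{\hat\eta}{2}\|\grad f(x_k)\|^2$ and the net coefficient after polarization and part (b) is $-\frac{\hat\eta}{4}$; the telescoping step is then identical to the paper's.

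The one slip is in the residual accounting, precisely where you anticipated trouble. The three $\mu$-dependent remainders are $\frac{\hat\eta}{2}\cdot\frac{\mu^2L_g^2}{4}(d+3)^3$ (from part (a)), $\frac{\hat\eta}{2}\cdot\frac{\mu^2 L_g}{4}(d+6)^3$ (from part (b)), and $\frac{\hat\eta^2 L_g}{2}\cdot\frac{\mu^2 L_g^2}{2}(d+6)^3$ (from part (c)); substituting $\hat\eta=\frac{1}{2(d+4)L_g}$, these evaluate exactly to the three terms of $C(\mu)$ as defined in the theorem, i.e.\ the per-iteration additive error is $C(\mu)$, \emph{not} $\hat\eta\,C(\mu)$ (the factors of $\hat\eta$ are already absorbed into the $(d+4)$ denominators of $C(\mu)$). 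The correct one-step inequality is $\E_{u_k}f(x_{k+1})\le f(x_k)-\frac{\hat\eta}{4}\|\grad f(x_k)\|^2+C(\mu)$, and telescoping it yields \eqref{thm1-eq1} with the $\frac{4}{\hat\eta}C(\mu)$ term; your claimed residual $\hat\eta\,C(\mu)$ would instead produce $4C(\mu)$ after dividing by $\hat\eta/4$, which neither matches \eqref{thm1-eq1} nor follows from the derivation. Your closing complexity analysis already treats the error term as $\frac{4}{\hat\eta}C(\mu)=\mathcal{O}(\mu^2 d^3)$, i.e.\ the correct form, so the choices $\mu=\mathcal{O}(\epsilon/d^{3/2})$ and $N=\mathcal{O}(d/\epsilon^2)$ stand once the residual is corrected.
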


\begin{proof}{Proof of \cref{thm1}}
    From \cref{L-manifold-smooth} we have
    \[f(x_{k+1})\leq f(x_k)-\eta_k\langle g_\mu (x_k),\grad f(x_k) \rangle+\frac{\eta_k^2 L_g}{2}\|g_\mu (x_k)\|^2.\]
    Taking the expectation w.r.t. $u_k$ on both sides, we have
    \begin{align*}
        & \E_{u_k}\left[f(x_{k+1})\right] \leq f(x_k)-\eta_k\langle\E_{u_k}(g_\mu (x_k)),\grad f(x_k) \rangle+\frac{\eta_k^2 L_g}{2}\E_{u_k}(\|g_\mu (x_k)\|^2) \\
        \leq & f(x_k)-\eta_k\langle\E_{u_k}(g_\mu (x_k)),\grad f(x_k) \rangle +\frac{\eta_k^2 L_g}{2}\left(\frac{\mu^2}{2}L_g^2(d+6)^3+2(d+4)\|\grad f(x_k)\|^2\right),
    \end{align*}
    where the last inequality is by \cref{lemma123}. Now Take $\eta_k=\hat{\eta}=\frac{1}{2(d+4)L_g}$, we have
    \begin{align*}
        &\E_{u_k}\left[f(x_{k+1})\right]\\
        \leq & f(x_k)+\frac{\hat{\eta}}{2}( \|\grad f(x_k)\|^2-2\langle\E_{u_k}(g_\mu (x_k)),\grad f(x_k) \rangle )+\frac{\mu^2 L_g}{16}\frac{(d+6)^3}{(d+4)^2} \\
        =&f(x_k)+\frac{\hat{\eta}}{2}(\|\grad f(x_k)-\E_{u_k}(g_\mu (x_k))\|^2-\|\E_{u_k}(g_\mu (x_k))\|^2)+\frac{\mu^2 L_g}{16}\frac{(d+6)^3}{(d+4)^2} \\
        \leq & f(x_k)+\frac{\hat{\eta}}{2}\left(\frac{\mu^2 L_g^2}{4}(d+3)^{3}-\frac{1}{2}\|\grad f(x_k)\|^2+\frac{\mu^2}{4}L_g(d+6)^3\right)+\frac{\mu^2 L_g}{16}\frac{(d+6)^3}{(d+4)^2} \\
        =&f(x_k)-\frac{\hat{\eta}}{4}\|\grad f(x_k)\|^2+C(\mu),
    \end{align*}
    where the second inequality is from \cref{lemma123}. Define $\phi_k:=f(x_k)-f(x^*)$. Now take the expectation w.r.t. $\mathcal{U}_k=\{u_0,u_1,\ldots,u_{k-1}\}$,  we have
    $$
        \phi_{k+1}\leq \phi_k- \frac{\hat{\eta}}{4}\E_{\mathcal{U}_k}\|\grad f(x_k)\|^2+C(\mu).
    $$
    Summing the above inequality over $k=0,\ldots,N$ yields \eqref{thm1-eq1}.

    Therefore with $\mu=\mathcal{O}({\epsilon}/{d^{3/2}})$ we have $C(\mu)\leq {\hat{\eta}\epsilon^2}/{4}$. Taking $N\geq 8(d+4)L_g{(f(x_0)-f(x^*))}/{\epsilon^2}$ yields \eqref{thm1-eq2}. In summary, the number of iterations for obtaining an $\epsilon$-stationary solution is $\mathcal{O}({d}/{\epsilon^2})$, and hence the total zeroth-order oracle complexity is also $\mathcal{O}({d}/{\epsilon^2})$.
\end{proof}


\begin{remark}\label{rmk1}
Note that in \cref{algorithm1}, we only sample one Gaussian vector in each iteration of the algorithm. In practice, one can also sample multiple Gaussian random vectors in each iteration and obtain an averaged gradient estimator. Suppose we sample $m$ i.i.d. Gaussian random vectors in each iteration and use the average $\bar{g}_\mu(x)=\frac{1}{m}\sum_{i=1}^{m}g_{\mu,i}(x)$, then the bound for our zeroth-order estimator becomes
    \begin{equation}\label{multi_sample_bound}
            \E(\|\bar{g}_\mu(x)-\grad f(x)\|^2)\leq \mu^2L_g^2(d+6)^3+\frac{2(d+4)}{m}\|\grad f(x)\|^2.
    \end{equation}
Hence, the final result in \cref{thm1} can be improved to
\begin{equation}\label{multi_sample_bound-2}
        \begin{split}
            \frac{1}{N+1}\sum_{k=0}^{N}\E_{\mathcal{U}_k}\|\grad f(x_k)\|^2\leq 4 L_g \frac{f(x_0)-f(x^*)}{N+1}+\mu^2 L_g^2(d+6)^3,
        \end{split}
    \end{equation}
with $\hat{\eta}={1}/{L_g}$ and $C(\mu)={\mu^2 L_g}(d+6)^3/2$. Therefore the number of iterations required is improved to
$N = \mathcal{O}(1/\epsilon^2)$ when we set $\mu=\mathcal{O}(\epsilon/d^{3/2})$ and $ m=\mathcal{O}(d)$. However, the zeroth-order oracle complexity is still $\mathcal{O}({d}/{\epsilon^2})$. The proof of \eqref{multi_sample_bound} and \eqref{multi_sample_bound-2} is given in the appendix. This multi-sampling technique will play a key role in our stochastic and non-smooth case analyses.
\end{remark}

\subsection{Zeroth-Order Stochastic Riemannian Optimization for Nonconvex Problem}

In this section, we focus on the following nonconvex smooth problem:
\begin{equation}\label{prob:nonconvex-smooth}
    \min_{x\in\M} f(x):= \int_{\xi} F(x,\xi)d P(\xi),
\end{equation}
where $P$ is a random distribution, $F$ is a function satisfying \cref{L-manifold-smooth}, in variable $x$, almost surely. Note that $f$ automatically satisfies \cref{L-manifold-smooth} 
by the Jensen's inequality.

In the stochastic case, sampling multiple times in every iteration can improve the convergence rate. Our zeroth-order Riemannian gradient estimator is given by
\begin{equation}\label{stocashtic_oracle}
    \bar{g}_{\mu,\xi}(x) = \frac{1}{m}\sum_{i=1}^{m}g_{\mu,\xi_i}(x),~\text{where}~~g_{\mu,\xi_i}(x) = \frac{F(R_x(\mu u_i),\xi_i)-F(x,\xi_i)}{\mu} u_i,
\end{equation}
and $u_i$ is a standard normal random vector on $T_x\M$. We also immediately have that
\begin{equation}
    \E_{\xi_i} g_{\mu,\xi_i}(x) = \frac{f(R_x(\mu u))-f(x)}{\mu} u = g_{\mu}(x).
\end{equation}

The multi-sampling technique enables us to obtain the following bound on $\E\|\bar{g}_{\mu,\xi}(x)-\grad f(x)\|^2$, the proof of which is given in the Appendix \ref{sec:lemma4.2}.

\begin{lemma}\label{lemma:stocgradzor}
{For the Riemannian gradient estimator in~\eqref{stocashtic_oracle}, under Assumptions \ref{L-manifold-smooth} and \ref{sto_assumption}, we have}
\begin{equation}\label{stochastic_ineq}
\begin{split}
    &\E\|\bar{g}_{\mu,\xi}(x)-\grad f(x)\|^2\leq \mu^2 L_g^2(d+6)^3+\frac{8(d+4)}{m}\sigma^2+\frac{8(d+4)}{m}\|\grad f(x)\|^2,
\end{split}
\end{equation}
where the expectation $\E$ is taken for both Gaussian vectors $\mathcal{U}=\{u_1,...,u_m\}$ and $\xi$.
\end{lemma}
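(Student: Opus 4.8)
The plan is to decompose the estimator error into a ``bias'' term coming from the deterministic smoothing estimator $g_\mu(x)$ (already controlled by \cref{lemma123}) and a ``variance'' term coming from the stochasticity in the $\xi_i$'s, which will shrink at rate $1/m$ because the samples are i.i.d. First I would write
\begin{equation*}
\E\|\bar g_{\mu,\xi}(x)-\grad f(x)\|^2 = \E\|\bar g_{\mu,\xi}(x)-g_\mu(x)\|^2 + \E\|g_\mu(x)-\grad f(x)\|^2 + 2\E\langle \bar g_{\mu,\xi}(x)-g_\mu(x),\, g_\mu(x)-\grad f(x)\rangle,
\end{equation*}
using that $\E_{\xi}\bar g_{\mu,\xi}(x)=g_\mu(x)$ for a fixed realization of the Gaussian vectors $\mathcal{U}$. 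Conditioning on $\mathcal{U}$ first makes the cross term vanish, since $g_\mu(x)$ and $\grad f(x)$ are $\mathcal U$-measurable and $\bar g_{\mu,\xi}(x)-g_\mu(x)$ has conditional mean zero.

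For the second term, I would combine parts (a) and (c) of \cref{lemma123}: $\E_{\mathcal U}\|g_\mu(x)-\grad f(x)\|^2 \le 2\E_{\mathcal U}\|g_\mu(x)\|^2 + 2\|\grad f(x)\|^2$ is too lossy, so instead I would use $\E\|g_\mu - \grad f\|^2 = \E\|g_\mu\|^2 - 2\langle \E g_\mu, \grad f\rangle + \|\grad f\|^2 \le \E\|g_\mu\|^2 - \|\grad f\|^2 + \|\E g_\mu - \grad f\|^2$, and plug in (c) for $\E\|g_\mu\|^2$ and (a) for $\|\E g_\mu-\grad f\|$; this yields a $\mu^2 L_g^2(d+6)^3$-type term plus a $(2d+7)\|\grad f(x)\|^2$-type term, which is absorbed into the claimed $\tfrac{8(d+4)}{m}\|\grad f(x)\|^2$ bound once $m\ge 1$ is used (note $m \le 8(d+4)$ is not needed since the stated bound is an upper bound; one simply checks $2(d+4) \le 8(d+4)$, etc.). For the first term, since $\bar g_{\mu,\xi}-g_\mu = \tfrac1m\sum_i (g_{\mu,\xi_i}-g_\mu)$ is an average of $m$ i.i.d. conditionally-zero-mean terms, $\E_{\xi|\mathcal U}\|\bar g_{\mu,\xi}-g_\mu\|^2 = \tfrac1m \E_{\xi|\mathcal U}\|g_{\mu,\xi_1}-g_\mu\|^2 \le \tfrac1m \E_{\xi|\mathcal U}\|g_{\mu,\xi_1}\|^2$. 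Then I would bound $\E\|g_{\mu,\xi_1}\|^2$ by mimicking the proof of part (c) of \cref{lemma123} but carrying $F(\cdot,\xi)$ instead of $f$: expand $(F(R_x(\mu u),\xi)-F(x,\xi))^2 \le 2(\tfrac{L_g}{2}\mu^2\|u\|^2)^2 + 2\mu^2\langle\grad F(x,\xi),u\rangle^2$, take expectations, use \cref{corr:rmoment} and the Gaussian-weighting trick (choosing $\tau = 2/(d+4)$) to get $\E\|\langle\grad F(x,\xi),u\rangle u\|^2 \le 2(d+4)\E\|\grad F(x,\xi)\|^2$, and finally split $\E\|\grad F(x,\xi)\|^2 \le 2\sigma^2 + 2\|\grad f(x)\|^2$ via \cref{sto_assumption}. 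Collecting gives the $\tfrac{8(d+4)}{m}\sigma^2$ and $\tfrac{8(d+4)}{m}\|\grad f(x)\|^2$ contributions.

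The main obstacle I anticipate is bookkeeping the constants so that everything fits under the single clean bound in \eqref{stochastic_ineq}: in particular making sure the $\|\grad f(x)\|^2$ coefficient from the bias term (which is $O(d)$, not $O(d)/m$) is correctly merged with the $O(d)/m$ variance coefficient, and verifying that the choice $\tau=2/(d+4)$ still gives a factor of the stated form after the $(1-\tau)^{-1-d/2}e^{-1}$ prefactor is bounded by a constant (this is exactly the computation in \eqref{trick_c_2}, which I would cite rather than redo). A secondary subtlety is that \cref{L-manifold-smooth} is assumed for $F(\cdot,\xi)$ only almost surely, so the Hölder/Jensen steps should be taken under the almost-sure event and the expectations over $\xi$ taken afterward; this is routine given the standing assumptions in this subsection.
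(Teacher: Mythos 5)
Your overall plan---condition on the Gaussian directions to kill the cross term, control the $\xi$-variance at rate $1/m$, and control the remaining deterministic-estimator error via \cref{lemma123}---is close in spirit to the paper's argument, and your treatment of the variance term (reproducing the proof of part (c) of \cref{lemma123} with $F(\cdot,\xi)$ in place of $f$, then splitting $\E\|\grad F(x,\xi)\|^2\le 2\sigma^2+2\|\grad f(x)\|^2$ via \cref{sto_assumption}) is exactly what the paper does to obtain \eqref{temp_lemma}. However, there is a genuine gap in your handling of the second term. After conditioning on $\mathcal{U}$, that term should be $\E\|\bar g_\mu(x)-\grad f(x)\|^2$ with $\bar g_\mu(x)=\frac1m\sum_i g_\mu^{(i)}(x)$ the average of the deterministic estimators over the $m$ sampled directions; you instead bound the single-sample quantity $\E\|g_\mu(x)-\grad f(x)\|^2$ using part (c) of \cref{lemma123}, which produces a $2(d+4)\|\grad f(x)\|^2$ contribution carrying no $1/m$ factor. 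Your claim that this is ``absorbed into $\frac{8(d+4)}{m}\|\grad f(x)\|^2$ once $m\ge1$ is used'' is false: $2(d+4)\le\frac{8(d+4)}{m}$ requires $m\le4$, whereas the downstream results take $m=\mathcal{O}(d\sigma^2/\epsilon^2)$, and \cref{thm2} specifically needs the coefficient of $\|\grad f(x)\|^2$ to be driven below $1$ by choosing $m$ large. A bound with an $O(d)\|\grad f(x)\|^2$ term that does not decay in $m$ is strictly weaker than \eqref{stochastic_ineq} and does not suffice.

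The fix is to make the bias term free of $\|\grad f(x)\|^2$ altogether. The paper centers at the full mean $\E_{\mathcal{U},\Xi}\bar g_{\mu,\xi}(x)=\E_{u_0}(g_\mu(x))$, so the squared bias is $\|\E_{u_0}(g_\mu(x))-\grad f(x)\|^2\le\frac{\mu^2L_g^2}{4}(d+3)^3$ by part (a) of \cref{lemma123} (a pure $\mu^2$-term), while the full variance is $\frac1m$ times a single-sample second moment, which is where all of the $(d+4)\|\grad f(x)\|^2/m$ and $(d+4)\sigma^2/m$ contributions live. Alternatively, within your decomposition you could apply a second bias--variance split over $\mathcal{U}$ to get $\E\|\bar g_\mu(x)-\grad f(x)\|^2=\frac1m\bigl(\E\|g_\mu(x)\|^2-\|\E g_\mu(x)\|^2\bigr)+\|\E g_\mu(x)-\grad f(x)\|^2$, which restores the $1/m$ on the $\|\grad f(x)\|^2$ coefficient; this is precisely the computation behind \eqref{multi_sample_bound} in \cref{rmk1}. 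As a secondary point, your intermediate inequality $\E\|g_\mu-\grad f\|^2\le\E\|g_\mu\|^2-\|\grad f\|^2+\|\E g_\mu-\grad f\|^2$ would require $\|\E g_\mu\|^2\ge\|\grad f\|^2$, which does not follow from \cref{lemma123}; the correct identity replaces $-\|\grad f\|^2$ by $-\|\E g_\mu\|^2$.
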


Our zeroth-order Riemannian stochastic gradient descent algorithm (\texttt{ZO-RSGD}) for solving \eqref{prob:nonconvex-smooth}, is presented in \cref{algorithm2}.

\begin{algorithm}[ht]
   \caption{Zeroth-order Riemannian Stochastic Gradient Descent (\texttt{ZO-RSGD})}
   \label{algorithm2}
\begin{algorithmic}[1]
   \STATE {\bfseries Input:} Initial point $x_0\in\M$, smoothing parameter $\mu$, multi-sample constant $m$, step size $\eta_k$, fixed number of iteration $N$.
   \FOR{$k=0$ {\bfseries to} $N-1$}
   \STATE Sample the standard Gaussian random vectors $u^k_i$ on $T_{x_k}\M$ by orthogonal projection in \cref{def_zero_rgrad}, and sample $\xi^k_i$, $i=1,...,m$.
   \STATE Compute the zeroth-order gradient $\bar{g}_{\mu,\xi}(x_k)$ by \cref{stocashtic_oracle}.
   \STATE Update $x_{k+1}=R_{x_k}(-\eta_k \bar{g}_{\mu,\xi}(x_k))$.
   \ENDFOR
\end{algorithmic}
\end{algorithm}


Now we present convergence analysis for obtaining an $\epsilon$-stationary point of \eqref{prob:nonconvex-smooth}.

\begin{theorem}\label{thm2}
    Let  $F$ satisfy \cref{L-manifold-smooth}, w.r.t. variable $x$ almost surely. Suppose $\{x_k\}$ is the sequence generated by \cref{algorithm2} with the stepsize $\eta_k=\hat{\eta}=\frac{1}{L_g}$. Under~\cref{sto_assumption}, we have
    \begin{align}\label{thm2-eq1}
        \frac{1}{N+1}\sum_{k=0}^{N}\E_{\mathcal{U}_k,\Xi_k}\|\grad f(x_k)\|^2\leq  4 L_g \frac{f(x_0)-f(x^*)}{N+1}+C(\mu),
    \end{align}
    where $C(\mu)=2\mu^2 L_g^2(d+6)^3+\frac{16(d+4)}{m}\sigma^2$, $\mathcal{U}_k$ denotes the set of all Gaussian random vectors and $\Xi_k$ denotes the set of all random variable $\xi_k$ in the first $k$ iterations. In order to have $\frac{1}{N+1}\sum_{k=0}^{N}\E_{\mathcal{U}_k,\Xi_k}\|\grad f(x_k)\|^2\leq \epsilon^2$, we need the smoothing parameter $\mu$, number of sampling $m$ in each iteration and number of iterations $N$ to be
    \begin{equation}
        \mu=\mathcal{O}\left({\epsilon}/{d^{3/2}}\right),\ m=\mathcal{O}\left({d\sigma^2}/{\epsilon^2}\right),\ N = \mathcal{O}\left({1}/{\epsilon^2}\right).
    \end{equation}
    Hence, the number of calls to the zeroth-order oracle is $m N=\mathcal{O}({d}/{\epsilon^4})$.
\end{theorem}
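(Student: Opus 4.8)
The plan is to mirror the descent argument used for \cref{thm1}, with the modification that we now pay for the extra $\xi$-stochasticity via the mini-batch estimator $\bar g_{\mu,\xi}$ and the bound in \cref{lemma:stocgradzor}. First I would note that $f$ itself satisfies \cref{L-manifold-smooth} (by Jensen's inequality), and invoke it at the update $x_{k+1}=R_{x_k}(-\eta_k\bar g_{\mu,\xi}(x_k))$ to get the one-step inequality
\begin{equation*}
f(x_{k+1})\leq f(x_k)-\eta_k\langle \bar g_{\mu,\xi}(x_k),\grad f(x_k)\rangle+\frac{\eta_k^2 L_g}{2}\|\bar g_{\mu,\xi}(x_k)\|^2 .
\end{equation*}
Let $\E_k$ denote expectation over the fresh randomness $\mathcal U_k=\{u^k_i\}$ and $\Xi_k=\{\xi^k_i\}$ conditioned on the history. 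Since $\E_{\xi}\bar g_{\mu,\xi}(x_k)=g_\mu(x_k)$ given the $u^k_i$, we have $\E_k\bar g_{\mu,\xi}(x_k)=\E_{u}g_\mu(x_k)$, whose distance to $\grad f(x_k)$ is at most $\tfrac{\mu L_g}{2}(d+3)^{3/2}$ by \cref{lemma123}(a).

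The key algebraic step is to take $\E_k$ of the descent inequality and handle the two error terms so that $\|\grad f(x_k)\|^2$ emerges with a strictly negative coefficient. For the cross term I would use $-2\langle a,b\rangle=\|a-b\|^2-\|a\|^2-\|b\|^2$ with $a=\E_k\bar g_{\mu,\xi}(x_k)$, $b=\grad f(x_k)$; for the quadratic term I would use the variance decomposition $\E_k\|\bar g_{\mu,\xi}(x_k)\|^2\leq\|\E_k\bar g_{\mu,\xi}(x_k)\|^2+\E_k\|\bar g_{\mu,\xi}(x_k)-\grad f(x_k)\|^2$ together with \cref{lemma:stocgradzor}. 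The crucial choice is $\eta_k=\hat\eta=1/L_g$: the term $-\tfrac{\hat\eta}{2}\|\E_k\bar g_{\mu,\xi}(x_k)\|^2$ coming from the cross term exactly cancels the term $\tfrac{\hat\eta^2 L_g}{2}\|\E_k\bar g_{\mu,\xi}(x_k)\|^2$ from the quadratic term (this is the analogue of the role played by $\hat\eta=\tfrac{1}{2(d+4)L_g}$ in \cref{thm1}, except that here the mini-batch size $m$ rather than the step size carries the dimension). What survives is
\begin{equation*}
\E_k f(x_{k+1})\leq f(x_k)-\frac{1}{2L_g}\Bigl(1-\tfrac{8(d+4)}{m}\Bigr)\|\grad f(x_k)\|^2+\frac{C(\mu)}{4L_g},
\end{equation*}
where the $\tfrac{8(d+4)}{m}\|\grad f(x_k)\|^2$ contribution is precisely the $\|\grad f\|^2$-dependent part of the bound in \cref{lemma:stocgradzor}, and $C(\mu)$ collects the $\mu^2$-order terms (smoothing bias from \cref{lemma123}(a) and the $\mu^2L_g^2(d+6)^3$ term from \cref{lemma:stocgradzor}) together with the $\tfrac{16(d+4)}{m}\sigma^2$ term, up to absorbing lower-order $\mu^2$ pieces. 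As long as $m$ is at least a fixed constant multiple of $d+4$ — which is automatically satisfied by the final choice $m=\mathcal O(d\sigma^2/\epsilon^2)$ — the coefficient of $\|\grad f(x_k)\|^2$ is at least $\tfrac{1}{4L_g}$.

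I would then set $\phi_k:=f(x_k)-f(x^*)$, take total expectation over $\mathcal U_k,\Xi_k$, telescope the recursion over $k=0,\dots,N$, divide by $N+1$, and multiply by $4L_g$ to obtain \eqref{thm2-eq1}. Finally, to force the right-hand side below $\epsilon^2$ I would balance its three pieces: $\mu=\mathcal O(\epsilon/d^{3/2})$ makes $\mu^2L_g^2(d+6)^3=\mathcal O(\epsilon^2)$; $m=\mathcal O(d\sigma^2/\epsilon^2)$ makes $\tfrac{16(d+4)}{m}\sigma^2=\mathcal O(\epsilon^2)$ and simultaneously guarantees $m\gtrsim d$ used above; and $N=\mathcal O(1/\epsilon^2)$ makes $4L_g(f(x_0)-f(x^*))/(N+1)=\mathcal O(\epsilon^2)$, yielding a total oracle count $mN=\mathcal O(d/\epsilon^4)$. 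I expect the main obstacle — as opposed to routine bookkeeping — to be exactly the coupling between the smoothing bias (controlled by $\mu$) and the variance-type bound of \cref{lemma:stocgradzor}, whose leading $\|\grad f\|^2$-coefficient scales like $d/m$: it is this term, and not the descent geometry, that forces $m=\Omega(d)$, and is the reason the iteration complexity stays dimension-free at $\mathcal O(1/\epsilon^2)$ while the oracle complexity inherits the factor $d$.
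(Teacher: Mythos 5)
Your proposal is correct and follows essentially the same route as the paper: the descent inequality from \cref{L-manifold-smooth}, completing the square at $\hat\eta=1/L_g$, the bound of \cref{lemma:stocgradzor} with $m\gtrsim d$ to keep a strictly negative coefficient on $\|\grad f(x_k)\|^2$, and telescoping. The only (cosmetic) difference is that the paper completes the square pointwise before taking expectations, so the single quantity $\E\|\bar{g}_{\mu,\xi}(x_k)-\grad f(x_k)\|^2$ appears exactly once and produces the stated constants in $C(\mu)$, whereas your bias/variance split after taking $\E_k$ counts that error term twice and costs a factor of two in the constants without affecting any of the asymptotic conclusions.
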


\begin{proof}{Proof of \cref{thm2}}
    From \cref{L-manifold-smooth}, we have:
    $$
        f(x_{k+1})\leq f(x_k)-\eta_k\langle \bar{g}_{\mu,\xi}(x),\grad f(x_k) \rangle+\frac{\eta_k^2 L_g}{2}\|\bar{g}_{\mu,\xi}(x)\|^2
    $$
    Take $\eta_k=\hat{\eta}=\frac{1}{L_g}$, we have
    \begin{equation*}
    \begin{split}
        f(x_{k+1})&\leq f(x_k)-\eta_k\langle \bar{g}_{\mu,\xi}(x),\grad f(x_k) \rangle+\frac{\eta_k^2 L_g}{2}\|\bar{g}_{\mu,\xi}(x)\|^2 \\
        &=f(x_k)+\frac{1}{2 L_g}\left(\|\bar{g}_{\mu,\xi}(x)-\grad f(x)\|^2- \|\grad f(x)\|^2\right).
    \end{split}
    \end{equation*}
    Take the expectation for the random variables at iteration $k$ on both sides, we have
    \begin{equation*}
    \begin{split}
        &\E_{k} f(x_{k+1})\leq f(x_k)+\frac{1}{2 L_g}\left(\E_{k}\|\bar{g}_{\mu,\xi}(x)-\grad f(x)\|^2- \|\grad f(x)\|^2\right) \\
        \cref{multi_sample_bound}&\leq f(x_k)+\frac{1}{2 L_g}\left(\mu^2 L_g^2(d+6)^3+\frac{8(d+4)}{m}\sigma^2+\left(\frac{8(d+4)}{m}-1\right)\|\grad f(x)\|^2\right).
    \end{split}
    \end{equation*}
    Summing up over $k=0,...,N$ (assuming that $m\geq 16(d+4)$) yields \eqref{thm2-eq1}.
    In summary, the total number of iterations for obtaining an $\epsilon$-stationary solution of \eqref{prob:nonconvex-smooth} is $\mathcal{O}({1}/{\epsilon^2})$, and the stochastic zeroth-order oracle complexity is $\mathcal{O}({d}/{\epsilon^4})$.
\end{proof}

{In Appendix~\ref{sec:geoconvex}, we present the oracle complexity of Algorithm~\ref{algorithm2} when $f$ is geodesically convex and $\mathcal{M}$ is the Hadamard manifold.}

\subsection{Zeroth-order Stochastic Riemannian Proximal Gradient Method}
We now consider the general optimization problem of the form in \cref{problem}. For the sake of notation, we denote $p(x):= f(x) +h(x)$. We assume that $\M$ is a compact submanifold, $h$ is convex in the embedded space $\R^n$ and is also Lipschitz continuous with parameter $L_h$, and $f(x):= \int_{\xi} F(x,\xi)d P(\xi)$ satisfying \cref{sto_assumption}. 


The non-differentiability of $h$ prohibits Riemannian gradient methods to be applied directly. In \cite{chen2018proximal}, by assuming that the exact gradient of $f$ is available, a manifold proximal gradient method (ManPG) is proposed for solving \eqref{problem}. 
One typical iteration of ManPG is as follows:
\begin{equation}\label{eq:manpg}
\begin{split}
    v_k & :={\argmin}~\langle \grad f(x_k), v \rangle + \frac{1}{2t}\|v\|^2+h(x_k+v), \ \st, \ {v\in T_{x_k}\M}\\
    x_{k+1} & :=R_{x_k}(\eta_k v_k),
\end{split}
\end{equation}
where $t>0$ and $\eta_k>0$ are step sizes.
In this section, we develop a zeroth-order counterpart of ManPG (\texttt{ZO-ManPG}), where we assume that only noisy function evaluations of $f$ are available. The following lemma from~\cite{chen2018proximal} provides a notion of stationary point that is useful for our analysis. 
\begin{lemma}\label{lemma_opt}
Let $\bar{v}_k$ be the minimizer of the $v$-subproblem in \eqref{eq:manpg}.
    If $\bar{v}_k=0$, then $x_k$ is a stationary point of problem \eqref{problem}. We say $x_k$ is an $\epsilon$-stationary point of \eqref{problem} with $t=\frac{1}{L_g}$, if $\|\bar{v}_k\| \leq {\epsilon}/{L_g}$.
\end{lemma}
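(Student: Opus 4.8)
The plan is to read off the claim directly from the first-order optimality condition of the convex $v$-subproblem in \eqref{eq:manpg}. First I would note that, since $T_{x_k}\M$ is a linear subspace of $\R^n$ and the map $\psi(v):=\langle \grad f(x_k),v\rangle+\frac{1}{2t}\|v\|^2+h(x_k+v)$ is finite-valued and strongly convex (modulus $1/t$ from the quadratic term), the subproblem $\min_{v\in T_{x_k}\M}\psi(v)$ has a unique minimizer $\bar v_k$. Writing the constraint as the indicator $\iota_{T_{x_k}\M}$, the optimality condition is $0\in\partial\psi(\bar v_k)+N_{T_{x_k}\M}(\bar v_k)$; because the normal cone of a subspace at any of its points is the orthogonal complement $(T_{x_k}\M)^\perp$, and because $\partial\psi(\bar v_k)=\grad f(x_k)+\frac1t\bar v_k+\partial h(x_k+\bar v_k)$ by the convex sum rule (the first two summands are smooth, $h$ is finite), this is equivalent to
\[
0\in\proj_{T_{x_k}\M}\!\Big(\grad f(x_k)+\tfrac1t\,\bar v_k+\partial h(x_k+\bar v_k)\Big),
\]
i.e.\ there is $\zeta_k\in\partial h(x_k+\bar v_k)$ with $\proj_{T_{x_k}\M}(\grad f(x_k)+\frac1t\bar v_k+\zeta_k)=0$ (using also that $\grad f(x_k)\in T_{x_k}\M$ already).

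Next I would specialize to $\bar v_k=0$: the displayed condition collapses to $\proj_{T_{x_k}\M}(\grad f(x_k)+\zeta_k)=0$ for some $\zeta_k\in\partial h(x_k)$, which is exactly the definition of a stationary point of the composite problem \eqref{problem}, namely $0\in\grad f(x_k)+\proj_{T_{x_k}\M}\big(\partial h(x_k)\big)$ (equivalently, $-\grad f(x_k)$ belongs to the projected subdifferential of $h$ at $x_k$). This establishes the first assertion. For the second assertion I would simply observe that the same optimality condition with $t=1/L_g$ gives $\|\proj_{T_{x_k}\M}(\grad f(x_k)+\zeta_k)\|=\frac1t\|\bar v_k\|=L_g\|\bar v_k\|$, so the hypothesis $\|\bar v_k\|\le\epsilon/L_g$ is precisely the statement that this natural stationarity residual is at most $\epsilon$; since this clause is a definition, nothing further is required.

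I expect the only delicate point to be the convex-analysis bookkeeping under the subspace constraint: justifying the sum rule for $\partial\psi$ and, above all, converting $0\in\partial(\psi+\iota_{T_{x_k}\M})(\bar v_k)$ into the projected form $0\in\proj_{T_{x_k}\M}(\partial\psi(\bar v_k))$ via $N_{T_{x_k}\M}(\bar v_k)=(T_{x_k}\M)^\perp$. Everything else is routine, and indeed the result is essentially a restatement of the optimality analysis already carried out in~\cite{chen2018proximal}, so the write-up can be kept short by citing that reference for the subproblem optimality characterization and only spelling out the $\bar v_k=0$ and $t=1/L_g$ specializations.
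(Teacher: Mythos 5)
Your argument is correct: the paper itself states this lemma without proof, importing it from \cite{chen2018proximal}, and your derivation via the first-order optimality condition $0\in \frac{1}{t}\bar v_k+\grad f(x_k)+\proj_{T_{x_k}\M}\partial h(x_k+\bar v_k)$ (using $N_{T_{x_k}\M}(\bar v_k)=(T_{x_k}\M)^{\perp}$ and the convex sum rule) is exactly the standard argument underlying that reference, and is the same optimality characterization the paper invokes in its proof of the non-expansiveness lemma. The only caveat is cosmetic: in your residual identity the subgradient $\zeta_k$ lives in $\partial h(x_k+\bar v_k)$ rather than $\partial h(x_k)$, but since the $\epsilon$-stationarity clause is a definition, nothing further is needed.
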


Our \texttt{ZO-ManPG} iterates as:
\begin{equation}\label{sub_prob}
\begin{split}
    v_k &:={\argmin}~\langle \bar{g}_{\mu,\xi}(x_k), v \rangle + \frac{1}{2t}\|v\|^2+h(x_k+v), \ \st, \ {v\in T_{x_k}\M},\\
    x_{k+1} & :=R_{x_k}(\eta_k v_k),
\end{split}
\end{equation}
where $\bar{g}_{\mu,\xi}(x_k)$ is defined in \cref{stocashtic_oracle}. Note that the only difference between \texttt{ZO-ManPG} \eqref{sub_prob} and ManPG \eqref{eq:manpg} is that in \eqref{sub_prob} we use $\bar{g}_{\mu,\xi}(x)$ to replace the Riemannian gradient $\grad f$ in \eqref{eq:manpg}. A more complete description of the algorithm is given in \cref{algorithm_manpg}. 
\begin{algorithm}[ht]
   \caption{Zeroth-Order Riemannian Proximal Gradient Descent (\texttt{ZO-ManPG})}
   \label{algorithm_manpg}
\begin{algorithmic}[1]
   \STATE {\bfseries Input:} Initial point $x_0$ on $\M$, smoothing parameter $\mu$, number of multi-sample $m$, step size $\eta_k$, fixed number of iteration $N$.
   \FOR{$k=0$ {\bfseries to} $N-1$}
   \STATE Sample $m$ standard Gaussian random vector $u_i$ on $T_{x_{k}}\M$ by by orthogonal projection in \cref{def_zero_rgrad}, $i=1,...,m$.
   \STATE Compute the zeroth-order gradient the random oracle $\bar{g}_\mu(x_{k})$ by \cref{stocashtic_oracle}.
   \STATE Solve $v_k$ from \cref{sub_prob}.
   \STATE Update $x_{k+1}=R_{x_k}(\eta_k v_k)$.
   \ENDFOR
\end{algorithmic}
\end{algorithm}
Now we provide some useful lemmas for analyzing the iteration complexity of \cref{algorithm_manpg}. 
\begin{lemma}(Non-expansiveness)\label{lemma_nonexpansive}
    Suppose $v:=\arg\min_{v\in T_{x}\M} \langle g_1, v \rangle + \frac{1}{2t}\|v\|^2+h(x+v)$ and $w:=\arg\min_{w\in T_{x}\M} \langle g_2, w \rangle + \frac{1}{2t}\|w\|^2+h(x+w)$. Then we have
    \begin{equation}\label{non-expansive}
        \|v-w\|\leq t \|g_1-g_2\|.
    \end{equation}
\end{lemma}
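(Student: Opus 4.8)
\textbf{Proof proposal for Lemma~\ref{lemma_nonexpansive} (Non-expansiveness).}

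The plan is to exploit the strong convexity of the two subproblems together with the first-order optimality conditions restricted to the tangent space $T_x\M$. Define the two objective functions
\[
\phi_1(z) := \langle g_1, z \rangle + \tfrac{1}{2t}\|z\|^2 + h(x+z),
\qquad
\phi_2(z) := \langle g_2, z \rangle + \tfrac{1}{2t}\|z\|^2 + h(x+z),
\]
both considered over the linear subspace $T_x\M$. Each $\phi_i$ is $\tfrac{1}{t}$-strongly convex on $T_x\M$ (the $\langle g_i,\cdot\rangle$ and $h(x+\cdot)$ parts are convex and the quadratic contributes the strong convexity), so $v$ and $w$ are their unique minimizers. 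The first step is to write the variational inequality characterizing each minimizer: for all $z \in T_x\M$,
\[
\langle g_1 + \tfrac{1}{t} v + s_v, \, z - v \rangle \geq 0,
\qquad
\langle g_2 + \tfrac{1}{t} w + s_w, \, z - w \rangle \geq 0,
\]
where $s_v \in \partial h(x+v)$ and $s_w \in \partial h(x+w)$ are appropriately chosen subgradients (projected onto $T_x\M$, or handled via the subdifferential of the restriction of $z\mapsto h(x+z)$ to the subspace). Since $v, w \in T_x\M$, I can take $z = w$ in the first inequality and $z = v$ in the second.

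The second step is to add the two resulting inequalities. After rearranging, the cross terms in $g_i$ and in the subgradients combine: monotonicity of $\partial h$ (restricted to the subspace, which is still a monotone operator) gives $\langle s_v - s_w, v - w\rangle \geq 0$, and the quadratic terms produce $\tfrac{1}{t}\|v-w\|^2$. This yields
\[
\tfrac{1}{t}\|v - w\|^2 \leq \langle g_1 - g_2, \, w - v \rangle \leq \|g_1 - g_2\|\,\|v-w\|,
\]
using Cauchy--Schwarz. Dividing through by $\|v-w\|$ (the case $v=w$ being trivial) and multiplying by $t$ gives $\|v-w\| \leq t\,\|g_1 - g_2\|$, as claimed.

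The main obstacle — really the only subtle point — is handling the subdifferential of $h$ correctly when the minimization is constrained to the subspace $T_x\M$ rather than all of $\R^n$. I expect to resolve this by noting that the $v$-subproblem is an unconstrained strongly convex minimization of the function $z \mapsto \phi_i(z)$ over the finite-dimensional space $T_x\M$, so standard subdifferential calculus applies on that space: the optimality condition is $0 \in g_i|_{T_x\M} + \tfrac{1}{t}v + \partial_{T_x\M}\big(h(x+\cdot)\big)(v)$, and the sum rule holds since $h$ is finite-valued and convex (Lipschitz with constant $L_h$). The monotonicity inequality $\langle s_v - s_w, v-w\rangle \geq 0$ then follows from monotonicity of this subdifferential operator on $T_x\M$. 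With that in place the rest is a routine two-line strong-convexity argument; no properties of the manifold beyond the linearity of $T_x\M$ are needed (the Lipschitz constant $L_h$ of $h$ does not even enter the bound).
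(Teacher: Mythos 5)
Your proposal is correct and follows essentially the same route as the paper's proof: both write the first-order optimality conditions for the two subproblems on the subspace $T_x\M$, pair them against $w-v$ and $v-w$, and cancel the subgradient terms of $h$ (the paper does this via the convexity inequality $\langle p_1, v-w\rangle \geq h(x+v)-h(x+w)$ and its counterpart, which added together are exactly the monotonicity of $\partial h$ that you invoke), before finishing with Cauchy--Schwarz.
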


\begin{proof}{Proof of \cref{lemma_nonexpansive}}
    By the first order optimality condition \cite{yang2014optimality}, we have $0\in \frac{1}{t}v+g_1+\proj_{T_{x}\M}\partial h(x+v)$ and $0\in \frac{1}{t}w+g_2+\proj_{T_{x}\M}\partial h(x+w)$, i.e. $\exists p_1\in\partial h(x+v)$ and $p_2\in\partial h(x+w)$ such that $v = -t(g_1+\proj_{T_{x}\M}(p_1))$ and $ w = -t(g_2+\proj_{T_{x}\M}(p_2))$. Therefore we have
    \begin{equation}\label{c1-c2}
    \begin{split}
        \langle v, w-v \rangle &= t\langle g_1+\proj_{T_{x}\M}(p_1),v-w \rangle \\
        \langle w, v-w \rangle &= t\langle g_2+\proj_{T_{x}\M}(p_2),w-v \rangle.
	\end{split}
    \end{equation}
    Now since $v,w\in T_{x}\M$, and using the convxity of $h$, we have
    \begin{equation}\label{c3}
        \langle \proj_{T_{x}\M}(p_1), v-w \rangle =\langle p_1,v-w \rangle
        =\langle p_1,(v+x)-(w+x) \rangle \geq h(v+x)-h(w+x).
    \end{equation}
    Substituting \cref{c1-c2} and into \eqref{c3} yields,
    \begin{equation*}
    \begin{split}
        \langle v, w-v \rangle & \geq t\langle g_1,v-w \rangle + h(v+x)-h(w+x) \\
        \langle w, v-w \rangle & \geq t\langle g_2,w-v \rangle + h(w+x)-h(v+x).
    \end{split}
    \end{equation*}
    Summing these two inequalities gives $\langle v-w,v-w \rangle\leq t\langle g_2- g_1,v-w \rangle$, and \cref{non-expansive} follows by applying the Cauchy-Schwarz inequality.
\end{proof}

\begin{corollary}\label{coro_manpg}
    Suppose $v_k$ is given by \eqref{sub_prob}, and $\bar{v}_k$ is solution of the $v$-subproblem in \cref{eq:manpg}, then we have
    \begin{equation*}
    \begin{split}
        \E_{\mathcal{U}_k,\Xi_k} \|v_k-\bar{v}_k\|_F^2\leq t^2\left( \mu^2 L_g^2(d+6)^3+\frac{8(d+4)}{m}\sigma^2+\frac{8(d+4)}{m}\|\grad f(x_k)\|^2\right).
    \end{split}
    \end{equation*}
\end{corollary}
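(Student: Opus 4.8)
The plan is to combine the non-expansiveness lemma (\cref{lemma_nonexpansive}) with the moment bound on the gradient estimator from \cref{lemma:stocgradzor}. The key observation is that $v_k$ and $\bar v_k$ are the minimizers of the same proximal subproblem on $T_{x_k}\M$ with the only difference being the linear term: $v_k$ uses $g_1 = \bar g_{\mu,\xi}(x_k)$ while $\bar v_k$ uses $g_2 = \grad f(x_k)$. Hence \cref{lemma_nonexpansive} applies verbatim and gives the deterministic (pathwise) bound
\begin{equation*}
\|v_k - \bar v_k\|^2 \leq t^2 \|\bar g_{\mu,\xi}(x_k) - \grad f(x_k)\|^2,
\end{equation*}
which holds for every realization of the Gaussian vectors $\mathcal U_k$ and the stochastic samples $\Xi_k$. (Here I am using that the Frobenius norm and the Euclidean norm on the relevant matrix/vector space coincide in the way the paper treats them.)

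Next I would take expectation $\E_{\mathcal U_k,\Xi_k}$ on both sides. Since $t^2$ is deterministic, the right-hand side becomes $t^2 \,\E_{\mathcal U_k,\Xi_k}\|\bar g_{\mu,\xi}(x_k) - \grad f(x_k)\|^2$. Strictly speaking one should be careful that $x_k$ itself depends on the earlier randomness; conditioning on the history up through iteration $k-1$ (so that $x_k$ is fixed), \cref{lemma:stocgradzor} gives
\begin{equation*}
\E\bigl[\|\bar g_{\mu,\xi}(x_k) - \grad f(x_k)\|^2 \,\big|\, x_k\bigr] \leq \mu^2 L_g^2 (d+6)^3 + \frac{8(d+4)}{m}\sigma^2 + \frac{8(d+4)}{m}\|\grad f(x_k)\|^2,
\end{equation*}
and then taking the outer expectation over the history and invoking the tower property yields exactly the claimed inequality, with $\|\grad f(x_k)\|^2$ understood under $\E_{\mathcal U_k,\Xi_k}$ as well. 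Substituting this into the bound from the previous paragraph completes the argument.

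This corollary is essentially an immediate composition of two already-established results, so there is no serious obstacle; the only points requiring a little care are bookkeeping ones — making sure the conditioning structure is handled correctly so that \cref{lemma:stocgradzor} can be applied with $x_k$ treated as fixed, and confirming that the norm appearing in \cref{lemma_nonexpansive} (stated for vectors in $T_x\M$) matches the Frobenius norm in the corollary's statement, which holds because tangent vectors here are matrices/vectors in the ambient Euclidean space with the induced inner product. I would write the proof in two displayed lines: first the pathwise non-expansiveness bound, then the expectation step citing \cref{lemma:stocgradzor}.
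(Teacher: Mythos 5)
Your proposal is correct and follows exactly the paper's own argument: apply the non-expansiveness \cref{lemma_nonexpansive} with $g_1=\bar g_{\mu,\xi}(x_k)$ and $g_2=\grad f(x_k)$, then take expectations and invoke \cref{lemma:stocgradzor}. Your extra remark about conditioning on the history so that $x_k$ is treated as fixed is a valid bookkeeping point that the paper leaves implicit.
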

\begin{proof}{Proof of \cref{coro_manpg}}
By \cref{lemma_nonexpansive}, we have
    $$
        \E_{\mathcal{U}_k,\Xi_k} \|v_k-\bar{v}_k\|_F^2\leq t^2\E_{\mathcal{U}_k,\Xi_k} \|\bar{g}_{\mu,\xi}(x_k)-\grad f(x_k)\|_F^2.
    $$
   From \cref{lemma:stocgradzor},
    \begin{equation*}
    \begin{split}
      &  \E_{\mathcal{U}_k,\Xi_k} \|\bar{g}_{\mu,\xi}(x_k)-\grad f(x_k)\|_F^2 \\ \leq &  \mu^2 L_g^2(d+6)^3+\frac{8(d+4)}{m}\sigma^2+\frac{8(d+4)}{m}\|\grad f(x_k)\|^2.
    \end{split}
    \end{equation*}
   The desired result hence follows by combining these two inequalities.
\end{proof}

The following lemma shows the sufficient decrease property for one iteration of \texttt{ZO-ManPG}.

\begin{lemma}\label{lemma_manpg_update}
    For any $t>0$, there exists a constant $\bar{\eta}>0$ such that for any $0\leq \eta_k\leq \min\{1, \bar{\eta}\}$, the $(x_k,v_k)$ generated by \cref{algorithm_manpg} satisfies 
    \begin{equation}\label{lemma_manpg_update-eq1}
        p(x_{k+1})-p(x_k)\leq -\left(\frac{\eta_k}{2t}-\tilde{C}\right)\|v_k\|^2,
    \end{equation}
    where $\tilde{C}=\mu^2 L_g^2(d+6)^3+\frac{8(d+4)}{m}\sigma^2+\frac{8(d+4)}{m}G^2$ and $G$ is the upper bound of the Riemannian gradient $\grad f(x)$ (existence by the compactness of $\M$).
\end{lemma}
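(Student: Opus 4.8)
The plan is to follow the standard manifold proximal-gradient sufficient-decrease argument from \cite{chen2018proximal}, modified to absorb the inexactness of the zeroth-order gradient $\bar g_{\mu,\xi}(x_k)$ and the effect of the retraction on the ambient nonsmooth term. Write $p(x_{k+1})-p(x_k)=[f(x_{k+1})-f(x_k)]+[h(x_{k+1})-h(x_k)]$ with $x_{k+1}=R_{x_k}(\eta_k v_k)$, and bound the two pieces separately. For the smooth piece I would apply \cref{L-manifold-smooth} to $f$ (which holds since $F(\cdot,\xi)$ is $L_g$-retraction-smooth almost surely, hence so is $f$ by Jensen's inequality), obtaining $f(x_{k+1})-f(x_k)\le \eta_k\langle\grad f(x_k),v_k\rangle+\tfrac{L_g\eta_k^2}{2}\|v_k\|^2$.

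The more delicate piece is $h(x_{k+1})-h(x_k)$, because $h$ is convex only in the ambient space whereas the update moves along $\M$. First I would use that $R_{x_k}$ is a retraction, so $R_{x_k}(\xi)=x_k+\xi+O(\|\xi\|^2)$; by compactness of $\M$ this holds uniformly, i.e.\ $\|R_{x_k}(\eta_k v_k)-(x_k+\eta_k v_k)\|\le M\eta_k^2\|v_k\|^2$ for an absolute constant $M$. Combining this with $L_h$-Lipschitz continuity of $h$, then with convexity of $h$ and $\eta_k\le 1$, gives $h(x_{k+1})\le h(x_k+\eta_k v_k)+L_hM\eta_k^2\|v_k\|^2\le (1-\eta_k)h(x_k)+\eta_k h(x_k+v_k)+L_hM\eta_k^2\|v_k\|^2$.

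Next I would invoke optimality of $v_k$ in \cref{sub_prob}: the subproblem objective is $\tfrac1t$-strongly convex on $T_{x_k}\M$ and $v=0$ is feasible, so comparing the two (exactly as in the proof of \cref{lemma_nonexpansive}) yields $\langle\bar g_{\mu,\xi}(x_k),v_k\rangle+h(x_k+v_k)-h(x_k)\le -\tfrac1t\|v_k\|^2$. Substituting the three bounds into $p(x_{k+1})-p(x_k)$ and writing $\grad f(x_k)=\bar g_{\mu,\xi}(x_k)+(\grad f(x_k)-\bar g_{\mu,\xi}(x_k))$ leaves $p(x_{k+1})-p(x_k)\le -\tfrac{\eta_k}{t}\|v_k\|^2+\eta_k\langle\grad f(x_k)-\bar g_{\mu,\xi}(x_k),v_k\rangle+(\tfrac{L_g}{2}+L_hM)\eta_k^2\|v_k\|^2$. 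The rest is bookkeeping: choose $\bar\eta$ (of order $\big(t(\tfrac{L_g}{2}+L_hM)\big)^{-1}$) so that for $\eta_k\le\min\{1,\bar\eta\}$ the $O(\eta_k^2)$ term is dominated by a fraction of the linear descent $-\tfrac{\eta_k}{t}\|v_k\|^2$, leaving a residual of at least $-\tfrac{\eta_k}{2t}\|v_k\|^2$; and control the estimator-error cross term by Cauchy--Schwarz and Young's inequality, using \cref{lemma:stocgradzor} to bound $\E_{\mathcal U_k,\Xi_k}\|\bar g_{\mu,\xi}(x_k)-\grad f(x_k)\|^2\le\tilde C$ together with the uniform bound $G\ge\sup_{x\in\M}\|\grad f(x)\|$ (finite by compactness of $\M$) --- this is exactly what produces the $\tilde C\|v_k\|^2$ correction in \eqref{lemma_manpg_update-eq1}.

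The main obstacle I anticipate is the handling of the ambient nonsmooth term $h$ along the retraction: one must first descend from the manifold iterate $R_{x_k}(\eta_k v_k)$ to the Euclidean point $x_k+\eta_k v_k$ (so that convexity of $h$ becomes usable) through the uniform second-order retraction estimate, and then balance the resulting quadratic-in-$\eta_k$ error against the linear term $-\tfrac{\eta_k}{t}\|v_k\|^2$ --- it is precisely this balancing that forces the restriction $\eta_k\le\bar\eta$. A secondary subtlety is that the inexact gradient introduces a cross term $\eta_k\langle\grad f(x_k)-\bar g_{\mu,\xi}(x_k),v_k\rangle$ that is not mean-zero, since $v_k$ itself depends on $\bar g_{\mu,\xi}(x_k)$ and $\E\bar g_{\mu,\xi}=g_\mu\ne\grad f$; absorbing it cleanly into the $\tilde C$-term relies on the second-moment bound of \cref{lemma:stocgradzor} and on the compactness-based bound $G$, which is what accounts for the $G^2$-dependence inside $\tilde C$.
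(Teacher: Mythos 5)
Your proposal follows essentially the same route as the paper's proof: split $p$ into its smooth and nonsmooth parts, apply the retraction-smoothness of $f$, isolate the gradient-estimation error as a cross term controlled via \cref{lemma:stocgradzor} and the uniform bound $G$, and obtain the $-\frac{\eta_k}{2t}\|v_k\|^2$ descent from the strong convexity of the subproblem together with the retraction's second-order approximation and the convexity/Lipschitzness of $h$ --- precisely the ingredients of Lemma 5.2 (and Fact 3.6) of \cite{chen2018proximal}, to which the paper defers that portion of the argument. If anything, you spell out more of the details than the paper itself does.
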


\begin{proof}{Proof of \cref{lemma_manpg_update}}
    Notice that
    \begin{equation*}
    \begin{split}
        &f(x_{k+1})-f(x_k)\leq \langle\grad f(x_k),R_{x_k}(\eta_k v_k) - x_k\rangle + \frac{L_g}{2}\|R_{x_k}(\eta_k v_k) - x_k\|^2 \\
        &=\langle\grad f(x_k) - \bar{g}_{\mu,\xi}(x), R_{x_k}(\eta_k v_k) - x_k\rangle + \langle\bar{g}_{\mu,\xi}(x), R_{x_k}(\eta_k v_k) - x_k\rangle + \frac{L_g}{2}\|R_{x_k}(\eta_k v_k) - x_k\|^2,
    \end{split}
    \end{equation*}
    where the inequality follows from Assumption \ref{L-manifold-smooth}. Moreover, by Lemma \ref{lemma:stocgradzor} and the Fact 3.6 of \cite{chen2018proximal}, we have
    \begin{equation*}
    \begin{split}
        &\langle\grad f(x_k) - \bar{g}_{\mu,\xi}(x), R_{x_k}(\eta_k v_k) - x_k\rangle\leq \|\grad f(x_k) - \bar{g}_{\mu,\xi}(x)\|\|R_{x_k}(\eta_k v_k) - x_k\|\\
        &\leq M_1^2\eta_k^2\left[\mu^2 L_g^2(d+6)^3+\frac{8(d+4)}{m}\sigma^2+\frac{8(d+4)}{m}\|\grad f(x)\|^2\right]\|v_k\|^2.
    \end{split}
    \end{equation*}
    The rest of the proof of bounding $\langle\bar{g}_{\mu,\xi}(x), R_{x_k}(\eta_k v_k) - x_k\rangle + \frac{L_g}{2}\|R_{x_k}(\eta_k v_k) - x_k\|^2$ follows from exactly the same process as in (\cite{chen2018proximal}, Lemma 5.2). We omit the details for brevity. 
\end{proof}

\begin{theorem}\label{thm3}
    Under~\cref{sto_assumption} and~\cref{L-manifold-smooth}, the sequence generated by \cref{algorithm_manpg}, with $\eta_k=\hat{\eta} < \min\{1, \bar{\eta}\}$ and $t={1}/{L_g}$, satisfies:
    \begin{equation}\label{thm3-eq1}
    \begin{split}
        \frac{1}{N}\sum_{k=0}^{N-1} \E_{\mathcal{U}_k,\Xi_k}\|\bar{v}_k\|^2 &\leq \frac{4t(p(x_0)-p(x^*))}{(\hat{\eta}-8\tilde{C}) t N}+\frac{\hat{\eta}N t^2}{\hat{\eta}-8\tilde{C}t}\tilde{C}+\frac{8 t^3}{\hat{\eta}-8\tilde{C}t}\tilde{C}^2,
    \end{split}
    \end{equation}
    where $\tilde{C}=\mu^2 L_g^2(d+6)^3+\frac{8(d+4)}{m}\sigma^2+\frac{8(d+4)}{m}G^2$ and $G$ is the upper bound of the Riemannian gradient $\grad f(x)$ over the manifold $\M$. To guarantee 
    \[\min_{k=0,...,N-1} \E_{\mathcal{U}_k,\Xi_k}\|\bar{v}_k\|_F^2\leq{\epsilon^2}/{L_g^2},\] 
    the parameters need to be set as:
    $\mu=\mathcal{O}\left({\epsilon}/{d^{3/2}}\right)$, $m=\mathcal{O}\left({d G^2}/{\epsilon^2}\right)$, $N=\mathcal{O}\left({1}/{\epsilon^2}\right).$
    Hence, the number of calls to the stochastic zeroth-order oracle is $\mathcal{O}({d}/{\epsilon^4})$.
\end{theorem}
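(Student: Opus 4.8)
The plan is to treat \texttt{ZO-ManPG} as an inexact version of \texttt{ManPG} and combine three ingredients already available: the one-step sufficient-decrease estimate of \cref{lemma_manpg_update}, the non-expansiveness of the proximal subproblem (\cref{lemma_nonexpansive}), quantified in expectation by \cref{coro_manpg}, and the stationarity criterion of \cref{lemma_opt}, which measures progress through the \emph{exact}-gradient direction $\bar v_k$ rather than the computed direction $v_k$.

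First I would fix $t=1/L_g$ and a step size $\hat\eta\le\min\{1,\bar\eta\}$ with $\bar\eta$ from \cref{lemma_manpg_update}, and write the per-iteration bound $p(x_{k+1})-p(x_k)\le-\big(\frac{\hat\eta}{2t}-\tilde{C}\big)\|v_k\|^2$, where $\tilde{C}=\mu^2L_g^2(d+6)^3+\frac{8(d+4)}{m}\sigma^2+\frac{8(d+4)}{m}G^2$ and $G$ is the uniform bound on $\|\grad f\|$ over the compact manifold $\M$ (compactness, together with continuity of $f+h$, also guarantees that the minimum value $p(x^*)$ is attained, and is what lets one replace $\|\grad f(x_k)\|$ by $G$ in \cref{coro_manpg}). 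Taking expectation over all the Gaussian smoothing vectors and stochastic samples drawn up to iteration $k$, using the tower property, and telescoping over $k=0,\dots,N-1$ with $\E[p(x_N)]\ge p(x^*)$ produces $\big(\frac{\hat\eta}{2t}-\tilde{C}\big)\sum_{k=0}^{N-1}\E\|v_k\|^2\le p(x_0)-p(x^*)$.

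The next step is to convert control of $\sum_k\E\|v_k\|^2$ into control of $\sum_k\E\|\bar v_k\|^2$. Using $\|\bar v_k\|^2\le 2\|v_k\|^2+2\|v_k-\bar v_k\|^2$ together with $\E\|v_k-\bar v_k\|^2\le t^2\tilde{C}$ from \cref{coro_manpg}, I would substitute into the telescoped inequality and rearrange; carrying the additive inexactness terms through (after expectation, the quadratic bound on $\|v_k-\bar v_k\|$ and a Young-type splitting account for the $\tilde{C}$ and $\tilde{C}^2$ contributions) yields exactly \eqref{thm3-eq1}. A routine but necessary check here is that the denominator $\hat\eta-8\tilde{C}t$ is positive — guaranteed once $\tilde{C}$ is small, which the parameter choices will ensure — and that $\bar\eta$ in \cref{lemma_manpg_update} can be taken independent of $\mu$ and $m$, so that $\hat\eta$ is a legitimate constant step size.

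Finally, for the oracle count I would pick the parameters so that each term on the right of \eqref{thm3-eq1} is $\mathcal{O}(\epsilon^2/L_g^2)$: $\mu=\mathcal{O}(\epsilon/d^{3/2})$ makes $\mu^2L_g^2(d+6)^3=\mathcal{O}(\epsilon^2)$; $m=\mathcal{O}(dG^2/\epsilon^2)$ (hence also at least of order $d\sigma^2/\epsilon^2$) makes $\frac{8(d+4)}{m}(\sigma^2+G^2)=\mathcal{O}(\epsilon^2)$; together these give $\tilde{C}=\mathcal{O}(\epsilon^2)$, so the denominators are of order $\hat\eta$ and the $\tilde{C}$ and $\tilde{C}^2$ terms are $\mathcal{O}(\epsilon^2)$; then $N=\mathcal{O}(1/\epsilon^2)$ kills the first term. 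Consequently $\min_{0\le k\le N-1}\E\|\bar v_k\|^2\le\frac1N\sum_k\E\|\bar v_k\|^2\le\epsilon^2/L_g^2$, which by \cref{lemma_opt} is the desired $\epsilon$-stationarity, and the number of calls to the stochastic zeroth-order oracle is $mN=\mathcal{O}(d/\epsilon^4)$. I expect the main obstacle to be the careful stochastic bookkeeping in the sufficient-decrease step — conditioning the expectations correctly so that the subproblem inexactness $\|v_k-\bar v_k\|$ and the substitution $\|\grad f(x_k)\|\le G$ can be inserted without leaving uncontrolled cross terms — and checking that every constant, in particular $\bar\eta$, is uniform in the smoothing and batch-size parameters.
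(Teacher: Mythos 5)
Your proposal follows essentially the same route as the paper's proof: telescope the sufficient-decrease bound of \cref{lemma_manpg_update}, pass from $\sum_k\E\|v_k\|^2$ to $\sum_k\E\|\bar v_k\|^2$ via the non-expansiveness estimate of \cref{coro_manpg} with $\|\grad f(x_k)\|\le G$, rearrange to obtain \eqref{thm3-eq1}, and then tune $\mu$, $m$, $N$ so that each term is $\mathcal{O}(\epsilon^2/L_g^2)$. The decomposition $\|\bar v_k\|^2\le 2\|v_k\|^2+2\|v_k-\bar v_k\|^2$ and the parameter accounting match the paper's argument, so the proposal is correct and not materially different.
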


\begin{proof}{Proof of \cref{thm3}}
    Summing up \eqref{lemma_manpg_update-eq1} over $k=0,\ldots,N-1$ and using  \cref{coro_manpg}, we have:
    \begin{equation*}
    \begin{split}
        &p(x_0)-\E_{\mathcal{U}_k,\Xi_k} p(x_k)\geq\sum_{k=0}^{N-1}[\frac{\eta_k}{2 t}-\tilde{C}]\E_{\mathcal{U}_k}\|v_k\|_F^2\geq [\frac{\hat{\eta}}{4 t}-2\tilde{C}] \sum_{k=0}^{N-1}2\E_{\mathcal{U}_k,\Xi_k}\|v_k\|_F^2\\
        &\geq [\frac{\hat{\eta}}{4 t}-2\tilde{C}]\sum_{k=0}^{N-1}\left[ \E_{\mathcal{U}_k,\Xi_k}\|\bar{v}_k\|_F^2- t^2\left( \mu^2 L_g^2(d+6)^3+\frac{8(d+4)}{m}\sigma^2\right.\right.\\ & \left.\left.+\frac{8(d+4)}{m}\|\grad f(x_k)\|^2\right) \right] \\
%
        &\geq [\frac{\hat{\eta}}{4 t}-2\tilde{C}]\sum_{k=0}^{N-1} \E_{\mathcal{U}_k,\Xi_k}\|\bar{v}_k\|_F^2 - \frac{\hat{\eta} N t}{4}\left(\mu^2 L_g^2(d+6)^3+\frac{8(d+4)}{m}\sigma^2+\frac{8(d+4)}{m}G^2\right)\\ &
        +2t^2\left(\mu^2 L_g^2(d+6)^3+\frac{8(d+4)}{m}\sigma^2+\frac{8(d+4)}{m}G^2\right)^2,
    \end{split}
    \end{equation*}
    which immediately implies the desired result \eqref{thm3-eq1}.
\end{proof}


\begin{remark}
    The subproblem \cref{sub_prob} is the main computational effort in \cref{algorithm_manpg}. 
    Fortunately, this subproblem can be efficiently solved by a regularized semi-smooth Newton's method when $\M$ takes certain forms. We refer the reader to   \cite{xiao2018regularized,chen2018proximal} for more details. 
\end{remark}

\subsection{Escaping saddle points: Zeroth-order stochastic cubic regularized Newton's method over Riemannian manifolds}\label{sec_cubic}

In this section, we consider the problem of escaping saddle-points and converging to local minimizers in a stochastic zeroth-order Riemannian setting. Towards that, we leverage the Hessian estimator methodology developed in Section~\ref{sec:hessianestimator} and analyze a zeroth-order Riemannian stochastic cubic regularized Newton's method (\texttt{ZO-RSCRN}) for solving \eqref{prob:nonconvex-smooth}, which provably escapes the saddle points. 
Our approach is motivated by \cite{zhang2018cubic}, where the authors proposed the minimization of function $m_{x,\sigma}(\eta) = f(x)+\langle \grad f(x), \eta \rangle+\frac{1}{2}\langle P_x\circ \hess f(x)\circ P_x[\eta], \eta \rangle+\frac{\alpha_k}{6}\|\eta\|^3$ at each iteration. The zeroth-order counterpart replaces the Riemannian gradient and Hessian with the corresponding zeroth-order estimators. The proposed \texttt{ZO-RSCRN} algorithm is described in \cref{algorithm_cubic}. In \texttt{ZO-RSCRN}, the function in the cubic regularized subproblem is
\begin{equation}\label{cubic_subproblem}
    \hat{m}_{x,\alpha}(\eta) = f(x)+\langle \bar{g}_{\mu,\xi}(x), \eta \rangle+\frac{1}{2}\langle \bar{H}_{\mu,\xi}(x)[\eta], \eta \rangle+\frac{\alpha}{6}\|\eta\|^3.
\end{equation}
Note that if $\hat{\eta}=\operatorname{argmin}_{\eta} \hat{m}_{x,\alpha}(\eta)$, then the projection $P_x(\hat{\eta})$ is also a minimizer, because $\bar{g}_{\mu,\xi}(x)$ and $\bar{H}_{\mu,\xi}(x)$ only take effect on the component that is in $T_x\M$. 

\begin{algorithm}[tb]
   \caption{ Zeroth-Order Riemannian Stochastic Cubic Regularized Newton's Method \texttt{(ZO-RSCRN)}}
   \label{algorithm_cubic}
\begin{algorithmic}[1]
   \STATE {\bfseries Input:} Initial point $x_0$ on $\M$, smoothing parameter $\mu$, multi-sample parameter $m$ and $b$, cubic regularization parameter $\alpha$, number of iteration $N$.
   \FOR{$k=0$ {\bfseries to} $N-1$}
   \STATE Compute $\bar{g}_{\mu,\xi}(x_k)$ and $\bar{H}_{\mu,\xi}(x_k)$ based on~\eqref{stocashtic_oracle} and~\eqref{multi_sample_hessian} respectively.
   \STATE Solve $\eta_k=\operatorname{argmin}_{\eta} \hat{m}_{x_k,\alpha}(\eta)$, where $\hat{m}_{x,\alpha}(\eta)$ is defined in~\eqref{cubic_subproblem}.
   \STATE Update $x_{k+1}=R_{x_k}(P_x(\eta_k))$.
   \ENDFOR
\end{algorithmic}
\end{algorithm}

\begin{theorem}\label{thm:scrn}
    For manifold $\M$ and function $f:\M\rightarrow\R$ under Assumptions \ref{L-manifold-smooth}, \ref{assumption_hessian} and \ref{sto_assumption}, define $k_{\min}:=\argmin_{k}\E_{\mathcal{U}_k,\Xi_k}\|\eta_k\|$, then the update in \cref{algorithm_cubic} with $\alpha\geq L_H$ satisfies:
    \begin{equation}\label{thm:scrn-optcond}
    \E\|g_{k_{\min}+1}\|\leq \mathcal{O}(\epsilon), \mbox{ and } \E [\lambda_{\min}(\hess f_{k_{\min}+1})]\geq -\mathcal{O}(\sqrt{\epsilon}),
    \end{equation}
    given that the parameters satisfy:
    \begin{equation}\label{thm:scrn-param}
        N=\mathcal{O}\left({1}/{\epsilon^{3/2}}\right),\  \mu=\mathcal{O}\left(\min\left\{\frac{\epsilon}{d^{3/2}},\sqrt{\frac{\epsilon}{d^5}}\right\}\right),\ m=\mathcal{O}({d}/{\epsilon^2}),\ b=\mathcal{O}({d^4}/{\epsilon}),
    \end{equation}
    where $\lambda_{\min}$ denotes the smallest eigenvalue. 
    Hence, the zeroth-order oracle complexity is $\mathcal{O}({d}/{\epsilon^{7/2}}+{d^4}/{\epsilon^{5/2}})$.
\end{theorem}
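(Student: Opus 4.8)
The plan is to follow the Riemannian cubic‑regularized Newton analysis of \cite{zhang2018cubic}, but to carry the bias and variance of the zeroth‑order estimators $\bar g_{\mu,\xi}$ and $\bar H_{\mu,\xi}$ through every inequality. I will use throughout that $\M$ is compact, so $f\ge f^\ast>-\infty$ and $\|\grad f(x)\|\le G$ on $\M$, and that taking $\E_\xi$ in \cref{assumption_hessian} and \eqref{coro_hessian_assumption} makes $f$ satisfy the cubic descent bound $f(R_x(\eta))\le f(x)+\langle\grad f(x),\eta\rangle+\frac12\langle\hess f(x)[\eta],\eta\rangle+\frac{L_H}{6}\|\eta\|^3$, the first‑order bound $\|P_\eta^{-1}\grad f(R_x(\eta))-\grad f(x)-\hess f(x)[\eta]\|\le\frac{L_H}{2}\|\eta\|^2$, and the Lipschitz‑Hessian bound on $\hess f$ with constant $L_H$. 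Writing $e_g^{(k)}:=\grad f(x_k)-\bar g_{\mu,\xi}(x_k)$, $e_H^{(k)}:=\hess f(x_k)-\bar H_{\mu,\xi}(x_k)$, and letting $\E_k$ denote expectation conditional on $x_k$, \cref{lemma:stocgradzor} and \cref{lemma_var_hessian_approx} give $\E_k\|e_g^{(k)}\|^2\le\Delta_g:=\mu^2L_g^2(d+6)^3+\tfrac{8(d+4)}{m}(\sigma^2+G^2)$, $\E_k\|e_H^{(k)}\|_{\operatorname{op}}^2\le\Delta_H$, and $\E_k\|e_H^{(k)}\|_{\operatorname{op}}^3\le\Delta_H'$, where $\Delta_H,\Delta_H'$ are the right‑hand sides of \eqref{zohessian_approx_square_moment}, \eqref{zohessian_approx_third_moment}.

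First I would prove a per‑iteration descent lemma. Since $\eta_k$ minimizes $\hat m_{x_k,\alpha}$ in \eqref{cubic_subproblem}, its optimality conditions are $\bar g_{\mu,\xi}(x_k)+\bar H_{\mu,\xi}(x_k)[\eta_k]+\tfrac\alpha2\|\eta_k\|\eta_k=0$ and $\bar H_{\mu,\xi}(x_k)+\tfrac\alpha2\|\eta_k\|I\succeq0$ on $T_{x_k}\M$, which via the standard Nesterov--Polyak identity give $\hat m_{x_k,\alpha}(\eta_k)\le f(x_k)-\tfrac{\alpha}{12}\|\eta_k\|^3$. Substituting $\grad f(x_k)=\bar g_{\mu,\xi}(x_k)+e_g^{(k)}$, $\hess f(x_k)=\bar H_{\mu,\xi}(x_k)+e_H^{(k)}$ into the cubic descent bound for $f$ yields, for $\alpha\ge L_H$, $f(x_{k+1})\le f(x_k)-\tfrac{\alpha}{12}\|\eta_k\|^3+\langle e_g^{(k)},\eta_k\rangle+\tfrac12\langle e_H^{(k)}[\eta_k],\eta_k\rangle$. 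I would then bound the last two terms by Young's inequality with exponents $(3,\tfrac32)$, $\|e_g^{(k)}\|\,\|\eta_k\|\le\delta\|\eta_k\|^3+c\|e_g^{(k)}\|^{3/2}$ and $\|e_H^{(k)}\|_{\operatorname{op}}\|\eta_k\|^2\le\delta'\|\eta_k\|^3+c'\|e_H^{(k)}\|_{\operatorname{op}}^3$ with $\delta+\tfrac12\delta'<\tfrac{\alpha}{12}$, take $\E_k$ (using Jensen, $\E_k\|e_g^{(k)}\|^{3/2}\le\Delta_g^{3/4}$), take full expectation, and telescope over $k=0,\dots,N-1$ to get $\tfrac1N\sum_k\E\|\eta_k\|^3\le\Theta:=\mathcal{O}(\tfrac{f(x_0)-f^\ast}{N}+\Delta_g^{3/4}+\Delta_H')$. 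Hence the index $k_{\min}$ singled out by this bound satisfies $\E\|\eta_{k_{\min}}\|^3\le\Theta$, and therefore $\E\|\eta_{k_{\min}}\|^2\le\Theta^{2/3}$ and $\E\|\eta_{k_{\min}}\|\le\Theta^{1/3}$ by Jensen.

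Next I would convert smallness of $\|\eta_{k_{\min}}\|$ into approximate second‑order stationarity at $x_{k_{\min}+1}$. Replacing $\eta_k$ by its tangential projection (legitimate by the remark after \eqref{cubic_subproblem}), combining the first‑order optimality condition with the first‑order bound for $f$ and the isometry of $P_{\eta_k}$ gives $\|\grad f(x_{k+1})\|\le\|e_g^{(k)}\|+\|e_H^{(k)}\|_{\operatorname{op}}\|\eta_k\|+(\tfrac\alpha2+\tfrac{L_H}{2})\|\eta_k\|^2$, while combining the second‑order optimality condition with \cref{assumption_hessian} (in $\E_\xi$) and the fact that parallel transport preserves eigenvalues gives $\lambda_{\min}(\hess f(x_{k+1}))\ge-(\tfrac\alpha2+L_H)\|\eta_k\|-\|e_H^{(k)}\|_{\operatorname{op}}$. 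Taking expectations at the deterministic index $k_{\min}$ and using Cauchy--Schwarz with $\E\|e_g^{(k_{\min})}\|\le\sqrt{\Delta_g}$, $\E\|e_H^{(k_{\min})}\|_{\operatorname{op}}\le\sqrt{\Delta_H}$ yields $\E\|\grad f(x_{k_{\min}+1})\|=\mathcal{O}(\Theta^{2/3}+\sqrt{\Delta_g}+\sqrt{\Delta_H}\,\Theta^{1/3})$ and $\E[\lambda_{\min}(\hess f(x_{k_{\min}+1}))]=-\mathcal{O}(\Theta^{1/3}+\sqrt{\Delta_H})$. Substituting $N=\mathcal{O}(\epsilon^{-3/2})$, $\mu=\mathcal{O}(\min\{\epsilon d^{-3/2},\sqrt{\epsilon d^{-5}}\})$, $m=\mathcal{O}(d\epsilon^{-2})$, $b=\mathcal{O}(d^4\epsilon^{-1})$ makes $\Delta_g=\mathcal{O}(\epsilon^2)$, $\Delta_H=\mathcal{O}(\epsilon)$, $\Delta_H'=\mathcal{O}(\epsilon^{3/2})$, so $\Theta=\mathcal{O}(\epsilon^{3/2})$, giving \eqref{thm:scrn-optcond}; the zeroth‑order oracle complexity is $N$ times the per‑iteration count $\mathcal{O}(m+b)$, i.e.\ $\mathcal{O}(d/\epsilon^{7/2}+d^4/\epsilon^{5/2})$.

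The hard part will be the descent lemma, specifically making the estimator errors compatible with the very tight cubic term: the perturbations $\langle e_g^{(k)},\eta_k\rangle$ and $\langle e_H^{(k)}[\eta_k],\eta_k\rangle$ must be absorbed without destroying the $-\|\eta_k\|^3$ decrease, and the Young exponents $3/2$ (gradient) and $3$ (Hessian) are exactly the ones for which, after Jensen, only the second‑ and third‑order estimator moments from \cref{lemma:stocgradzor} and \cref{lemma_var_hessian_approx} appear; this in turn forces the two‑regime choice of $\mu$ (the gradient bias demands $\mu=\mathcal{O}(\epsilon/d^{3/2})$, while the $\mu^2(d+6)^5$ term in the Hessian error demands $\mu=\mathcal{O}(\sqrt{\epsilon/d^5})$). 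Secondary subtleties are the dependence between $\eta_k$ and the step‑$k$ estimator errors, handled by conditioning on $x_k$ before applying the moment bounds (and using $\|\grad f(x_k)\|\le G$), and the bookkeeping that $k_{\min}$ is a deterministic index, so $\E\|e_g^{(k_{\min})}\|^2$ and $\E\|e_H^{(k_{\min})}\|_{\operatorname{op}}^2$ remain controlled by the uniform bounds $\Delta_g,\Delta_H$.
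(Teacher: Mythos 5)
Your proposal is correct and follows essentially the same route as the paper's proof: the Nesterov--Polyak optimality conditions of the inexact cubic model yielding the $-\tfrac{\alpha}{12}\|\eta_k\|^3$ decrease, absorption of the estimator-error terms $\langle e_g^{(k)},\eta_k\rangle$ and $\tfrac12\langle e_H^{(k)}[\eta_k],\eta_k\rangle$ via Young's inequality with conjugate exponents $(3,3/2)$, telescoping to control $\E\|\eta_{k_{\min}}\|^3$, and then the same first- and second-order stationarity bounds at $x_{k_{\min}+1}$ from the optimality conditions combined with \cref{assumption_hessian} and the isometry of parallel transport. The only cosmetic difference is that you bound $\E[\|e_H^{(k)}\|_{\operatorname{op}}\|\eta_k\|]$ by Cauchy--Schwarz where the paper uses Young's inequality; the rates and parameter choices are identical.
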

\begin{proof}{Proof of \cref{thm:scrn}}
    Denote $f_k=f(x_k)$, $g_k=\grad f(x_k)$ and $\E=\E_{\mathcal{U}_k,\Xi_k}$ for ease of notation. We first provide the global optimality conditions of subproblem \cref{cubic_subproblem} following \cite{nesterov2006cubic}:
    \begin{equation}\label{cubic_optimality}
        (\bar{H}_{\mu,\xi}(x)+\lambda^*I)\eta + \bar{g}_{\mu,\xi}(x)=0,\ \lambda^*=\frac{\alpha}{2}\|\eta\|,\ \bar{H}_{\mu,\xi}(x)+\lambda^*I\succeq 0.
    \end{equation}
    Since the parallel transport $P_{\eta}$ is an isometry, we have 
    \begin{equation*}
    \begin{split}
        &\|g_{k+1}\|= \|P_{\eta_k}^{-1}g_{k+1}\| \\
        =&\|(P_{\eta_k}^{-1}g_{k+1}-g_{k}-\hess f_{k}[\eta_k]) + (g_{k}-\bar{g}_{\mu,\xi}(x_k))\\&+(\hess f_{k}[\eta_k] - \bar{H}_{\mu,\xi}(x_k)[\eta_k]) + (\bar{g}_{\mu,\xi}(x_k) + \bar{H}_{\mu,\xi}(x_k)[\eta_k])\| \\
        \leq & \|P_{\eta_k}^{-1}g_{k+1}-g_{k}-\hess f_{k}[\eta_k]\| + \|g_{k}-\bar{g}_{\mu,\xi}(x_k)\| \\&+ \|\hess f_{k}[\eta_k]-\bar{H}_{\mu,\xi}(x_k)[\eta_k]\| + \|\bar{g}_{\mu,\xi}(x_k)+\bar{H}_{\mu,\xi}(x_k)[\eta_k]\| \\
        \cref{coro_hessian_assumption}\leq & \frac{L_H}{2}\|\eta_k\|^2 + \|g_{k}-\bar{g}_{\mu,\xi}(x_k)\| \\&+ \|\hess f_{k}[\eta_k]-\bar{H}_{\mu,\xi}(x_k)[\eta_k]\| + \|\bar{g}_{\mu,\xi}(x_k)+\bar{H}_{\mu,\xi}(x_k)[\eta_k]\| \\
        \cref{cubic_optimality} = & \frac{L_H}{2}\|\eta_k\|^2 + \|g_{k}-\bar{g}_{\mu,\xi}(x_k)\|+ \|\hess f_{k}[\eta_k]-\bar{H}_{\mu,\xi}(x_k)[\eta_k]\| + \lambda^*\|\eta_k\|\\
        \cref{cubic_optimality} \leq & \frac{L_H}{2}\|\eta_k\|^2 + \|g_{k}-\bar{g}_{\mu,\xi}(x_k)\|+ \|\hess f_{k}[\eta_k]-\bar{H}_{\mu,\xi}(x_k)\|_{\operatorname{op}}\|\eta_k\| + \frac{\alpha}{2}\|\eta_k\|^2 \\
       \leq & \frac{L_H}{2}\|\eta_k\|^2+\|g_{k}-\bar{g}_{\mu,\xi}(x_k)\|+\frac{1}{2}\|\hess f_{k}-\bar{H}_{\mu,\xi}(x_k)\|_{\operatorname{op}}^2+\frac{1}{2}\|\eta_k\|^2+\frac{\alpha}{2}\|\eta_k\|^2.
    \end{split}
    \end{equation*}
    Taking expectation on both sides of the above inequality gives (by \cref{stochastic_ineq} and \cref{zohessian_approx_square_moment})
    \begin{equation}\label{temp_1_cubic}
        \E\|g_{k+1}\|-\sqrt{\delta_g}-\delta_H\leq\frac{1}{2}(L_{H}+\alpha+1+2L_2 \|g_k\|)\E\|\eta_k\|^2,
    \end{equation}
    where $\delta_g = \mu^2L_g^2(d+6)^3+\frac{8(d+4)}{m}(G^2+\sigma^2)$, 
  $G$ is the upper bound of $\|\grad f\|$ over $\M$, and $\delta_H = \frac{(d+16)^4}{b}L_g + \frac{\mu^2 L_H^2}{18}(d+6)^5$. 
Since $P_{\eta_k}^{-1}$ is an isometry, we have: 
    \begin{equation*}
    \begin{split}
        &\lambda_{\min}(\hess f_{k+1}) = \lambda_{\min}(P_{\eta_k}^{-1}\circ \hess f_{k+1} \circ P_{\eta_k}) \\
        \geq & \lambda_{\min}(P_{\eta_k}^{-1}\circ \hess f_{k+1} \circ P_{\eta_k} - \hess f_{k}) \\ &+\lambda_{\min}(\hess f_{k} - \bar{H}_{\mu,\xi}(x_k)) + \lambda_{\min}(\bar{H}_{\mu,\xi}(x_k)) \\
        \cref{assumption_hessian_eq}\geq & -L_H\|\eta_k\|+\lambda_{\min}(\hess f_{k} - \bar{H}_{\mu,\xi}(x_k)) + \lambda_{\min}(\bar{H}_{\mu,\xi}(x_k)) \\
        = & \lambda_{\min}(\hess f_{k} - \bar{H}_{\mu,\xi}(x_k)) + \lambda_{\min}(\bar{H}_{\mu,\xi}(x_k)-L_H\|\eta_k\| I) \\
        \cref{cubic_optimality} \geq & \lambda_{\min}(\hess f_{k} - \bar{H}_{\mu,\xi}(x_k)) - \frac{\alpha+2L_H}{2}\|\eta_k\|.
    \end{split}
    \end{equation*}    
    Taking expectation, we obtain (by \cref{zohessian_approx_square_moment})
    \begin{equation}\label{temp_2_cubic}
        \frac{\alpha+2L_{H}}{2}\E\|\eta_k\|\geq -(\sqrt{\delta_{H}}+\E\lambda_{\min}(\hess f_{k+1})).
    \end{equation}
    Now we will upper bound $\E\|\eta_k\|$. From \cref{assumption_hessian}, we have
    \begin{equation}\label{cubic_proof_main}
    \begin{split}
        \hat{f}_{x_k}(\eta_k)&\leq f(x_k) + g_k^\top  \eta_k  + \frac{1}{2}\eta_k^\top  H_k \eta_k + \frac{L_H}{6}\|\eta_k\|^3 \\
        &= \left(f(x_k)+\bar{g}_\mu(x_k)^\top  \eta_k +\frac{1}{2} \eta_k^\top  \bar{H}_\mu(x_k)\eta_k +\frac{L_H}{6}\|\eta_k\|^3 \right) \\ &+ \left( (g_k - \bar{g}_\mu(x_k))^\top  \eta_k + \frac{1}{2}\eta_k^\top  (H_k-\bar{H}_\mu(x_k)) \eta_k\right).
    \end{split}
    \end{equation}
    Using \cref{cubic_optimality} we have
    \begin{equation}\label{cubic_proof_eq1}
    \begin{split}
        &f(x_k)+\bar{g}_\mu(x_k)^\top  \eta_k +\frac{1}{2} \eta_k^\top  \bar{H}_\mu(x_k)\eta_k +\frac{L_H}{6}\|\eta_k\|^3 \\
       =& f(x_k) -\frac{1}{2}\eta_k^\top  \bar{H}_\mu(x_k)\eta_k + (\frac{L_H}{6} - \frac{\alpha}{2})\|\eta_k\|^3 \\
      =& f(x_k) -\frac{1}{2}\eta_k^\top  (\bar{H}_\mu(x_k)+\frac{\alpha}{2}\|\eta_k\|I)\eta_k - (\frac{\alpha}{4} - \frac{L_H}{6})\|\eta_k\|^3 \\
       \leq & f(x_k) - (\frac{\alpha}{4} - \frac{L_H}{6})\|\eta_k\|^3
        \leq f(x_k)-\frac{\alpha}{12}\|\eta_k\|^3,
    \end{split}
    \end{equation}
    where the last inequality is due to $\alpha\geq L_H$. Moreover, by Cauchy-Schwarz inequality and Young's inequality, we have 
    \begin{equation}\label{cubic_proof_eq2}
    \begin{split}
        &\E\left[ (g_k - \bar{g}_\mu(x_k))^\top  \eta_k + \frac{1}{2}\eta_k^\top  (H_k-\bar{H}_\mu(x_k)) \eta_k \right] \\
       \leq & \E\|g_k - \bar{g}_\mu(x_k)\|\|\eta_k\| + \frac{1}{2}\E\| H_k-\bar{H}_\mu(x_k)\|_{\operatorname{op}}\|\eta_k\|^2 \\
       \leq & \frac{32}{3\alpha}\E\|g_k-\bar{g}_\mu(x_k)\|^{3/2}+\frac{12}{\alpha}\E\| H_k-\bar{H}_\mu(x_k)\|_{\operatorname{op}}^3+\frac{\alpha}{24}\E\|\eta_k\|^3.
    \end{split}
    \end{equation}
    Plugging \eqref{cubic_proof_eq1} and \eqref{cubic_proof_eq2} to \cref{cubic_proof_main}, we have 
    \begin{equation}\label{cubic_proof_eq3}
    \E f_{k+1}\leq f_k - \frac{\alpha}{24}\E\|\eta_k\|^3 + \frac{32}{3 L_H}\delta_g^{3/4}+\frac{12}{L_H}\tilde{\delta}_H,
    \end{equation} 
    where $\tilde{\delta}_H=\tilde{C}\frac{(d+16)^6}{b^{3/2}}L_g^{1.5} + \frac{1}{27}\mu^3L_H^3 (d+6)^{7.5}$. Taking the sum for \eqref{cubic_proof_eq3} over $k=0,\ldots,N-1$, we have
    \begin{equation*}
    \begin{split}
        \frac{1}{N}\sum_{k=0}^{N}\E\|\eta_k\|^3\leq \frac{24}{L_H}\left(\frac{f_{0}-f^*}{N}+\frac{32}{3 L_H}\delta_g^{3/4}+\frac{12}{L_H}\tilde{\delta}_H\right),
    \end{split}
    \end{equation*}
    which together with \eqref{thm:scrn-param} yields 
    \begin{equation}\label{cubic_proof_eq4}
        \E\|\eta_{k_{\min}}\|^3\leq \mathcal{O}(\epsilon^{3/2}), \mbox{ and } \E \|\eta_{k_{\min}}\|^2\leq \mathcal{O}(\epsilon).
    \end{equation}
   Combining \cref{cubic_proof_eq4}, \cref{temp_1_cubic} and \cref{temp_2_cubic} yields \eqref{thm:scrn-optcond}. 
\end{proof}

\begin{remark}
    To solve the subproblem, we implement the same Krylov subspace method as in \cite{agarwal2018adaptive}, where the Riemannian Hessian and vector multiplication is approximated by Lanczos iterations. Note also that in our setting, we only require vector-vector multiplications due to the structure of our Hessian estimator in~\cref{Gauss_oracle_second_order}. For the purpose of brevity, we refer to \cite{carmon2018analysis,agarwal2018adaptive} for a comprehensive study of this method.
\end{remark}

\section{Numerical Experiments and Applications}\label{experiments}
We now explore the performance of the proposed algorithms on various simulation experiments. Finally, we demonstrate the applicability of stochastic zeroth-order Riemannian optimization for the problems of zeroth-order attacks on deep neural networks and controlling stiffness matrix in robotics. 
We conducted our experiments on a desktop with Intel Core 9600K CPU and NVIDIA GeForce RTX 2070 GPU. 

\subsection{Simulation Experiments} For all the simulation experiments listed below, we plot the average result over 100 runs.

{\bf Experiment 1: Procrustes problem \cite{absil2009optimization}.} This is a matrix linear regression problem on a given manifold: $\min_{X\in\M} \|AX-B\|_F^2$, where $X\in\R^{n\times p}$, $A\in\R^{l\times n}$ and $B\in\R^{l\times p}$. The manifold we use is the Stiefel manifold $\M = \St(n, p)$. 
In our experiment, we pick up different dimension $n\times p$ and record the time cost to achieve prescribed precision $\epsilon$. The entries of matrix $A$ are generated by standard Gaussian distribution. We compare our \texttt{ZO-RGD} (\cref{algorithm1}) with the first-order Riemannian gradient method (RGD) on this problem. The results are shown in Table \ref{tab:table_1}. For each run, we sample $m=n\times p$ Gaussian samples for each iteration. The multi-sample version of \texttt{ZO-RGD} closely resembles the convergence rate of RGD, as shown in \cref{fig_proscrutes}. These results 
indicate our zeroth-order method \texttt{ZO-RGD} is comparable with its first-order counterpart RGD, though the former one only uses zeroth-order information. 

\begin{table}[t!]
\begin{center}
\begin{small}
\begin{sc}
\begin{tabular}{|c|c|c|c|c|}
\hline
Dimension & $\epsilon$ & Stepsize & No. iter. \texttt{ZO-RGD} & Aver. No. iter. RGD \\
\hline
$15\times 5$ & $10^{-3}$ & $10^{-2}$ & $460\pm 137$ & 442 \\
$25\times 15$ & $10^{-3}$ & $10^{-2}$ & $892\pm 99$ & 852 \\
$50\times 20$ & $10^{-2}$ & $5\times10^{-3}$ & $255\pm 26$ & 236 \\
\hline
\end{tabular}
\end{sc}
\end{small}
\end{center}
\caption{Comparison of \texttt{ZO-RGD} and RGD on the Procrustes problem.}\label{tab:table_1}
\end{table}

\begin{figure*}[t!]
\begin{center}
\subfigure[$(n,p)=(15,5)$]{\includegraphics[clip, trim=4cm 8cm 4cm 8cm,width=0.32\columnwidth]{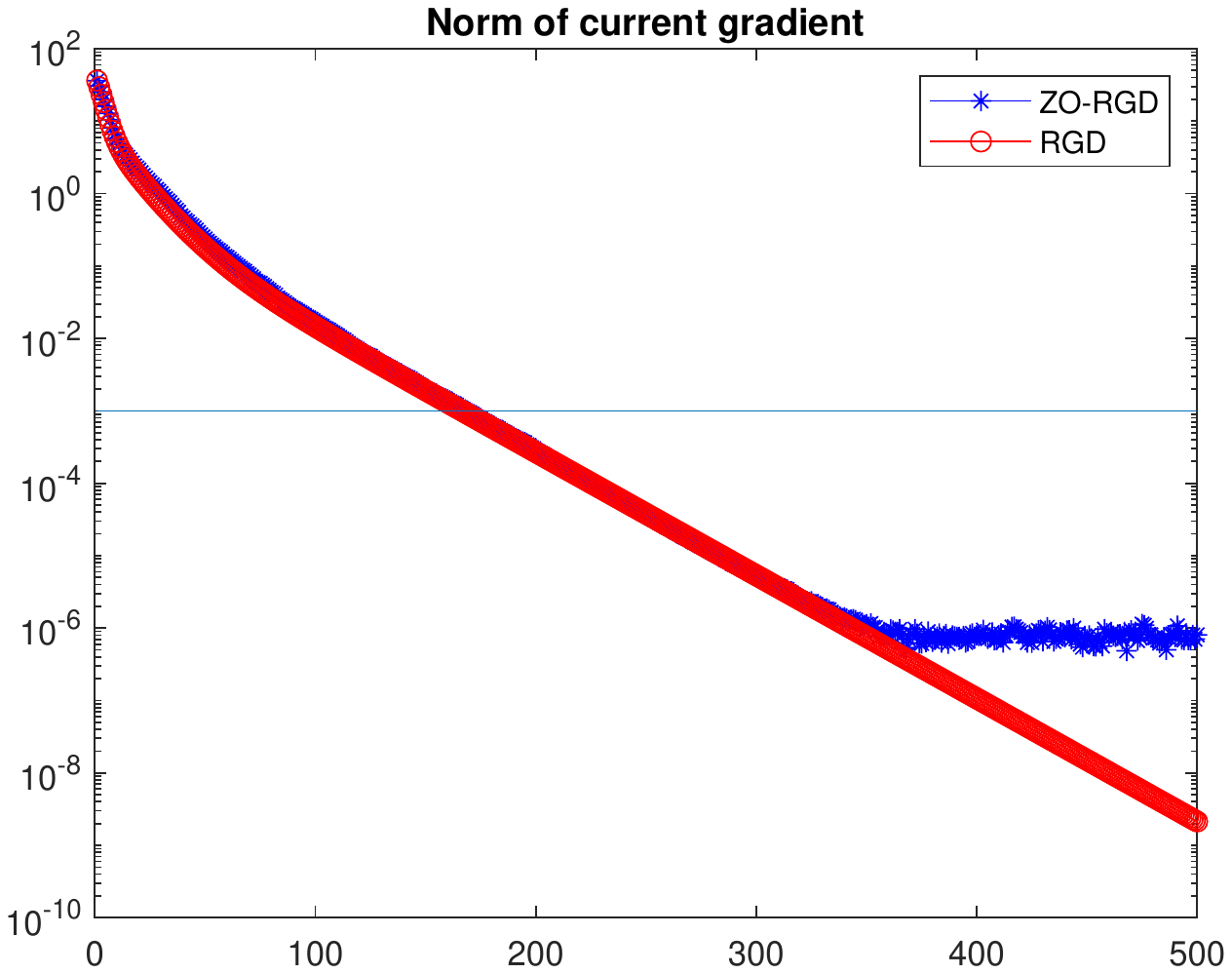}}
\subfigure[$(n,p)=(25,15)$]{\includegraphics[clip, trim=4cm 8cm 4cm 8cm,width=0.32\columnwidth]{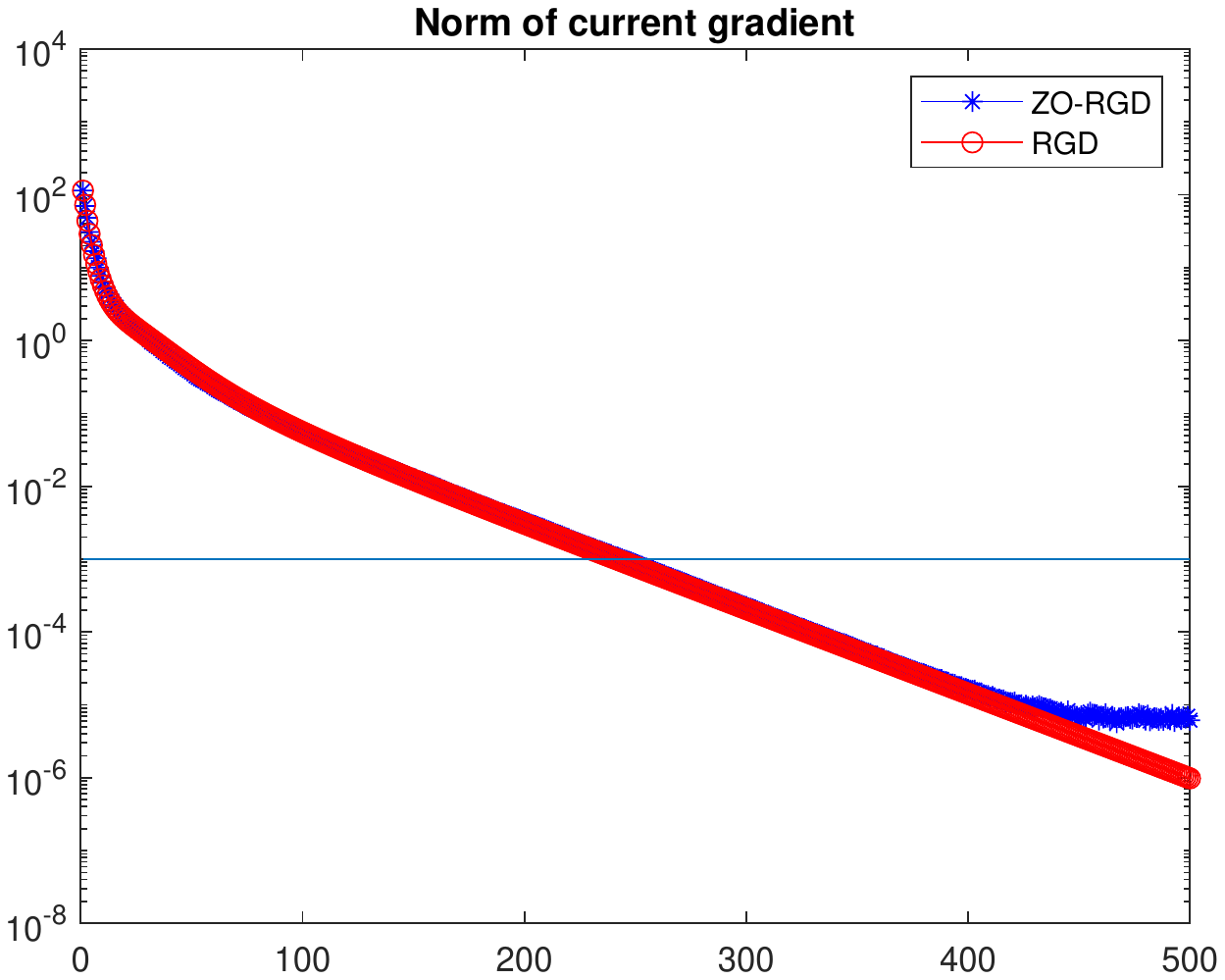}}
\subfigure[$(n,p)=(50,20)$]{\includegraphics[clip, trim=4cm 8cm 4cm 8cm,width=0.32\columnwidth]{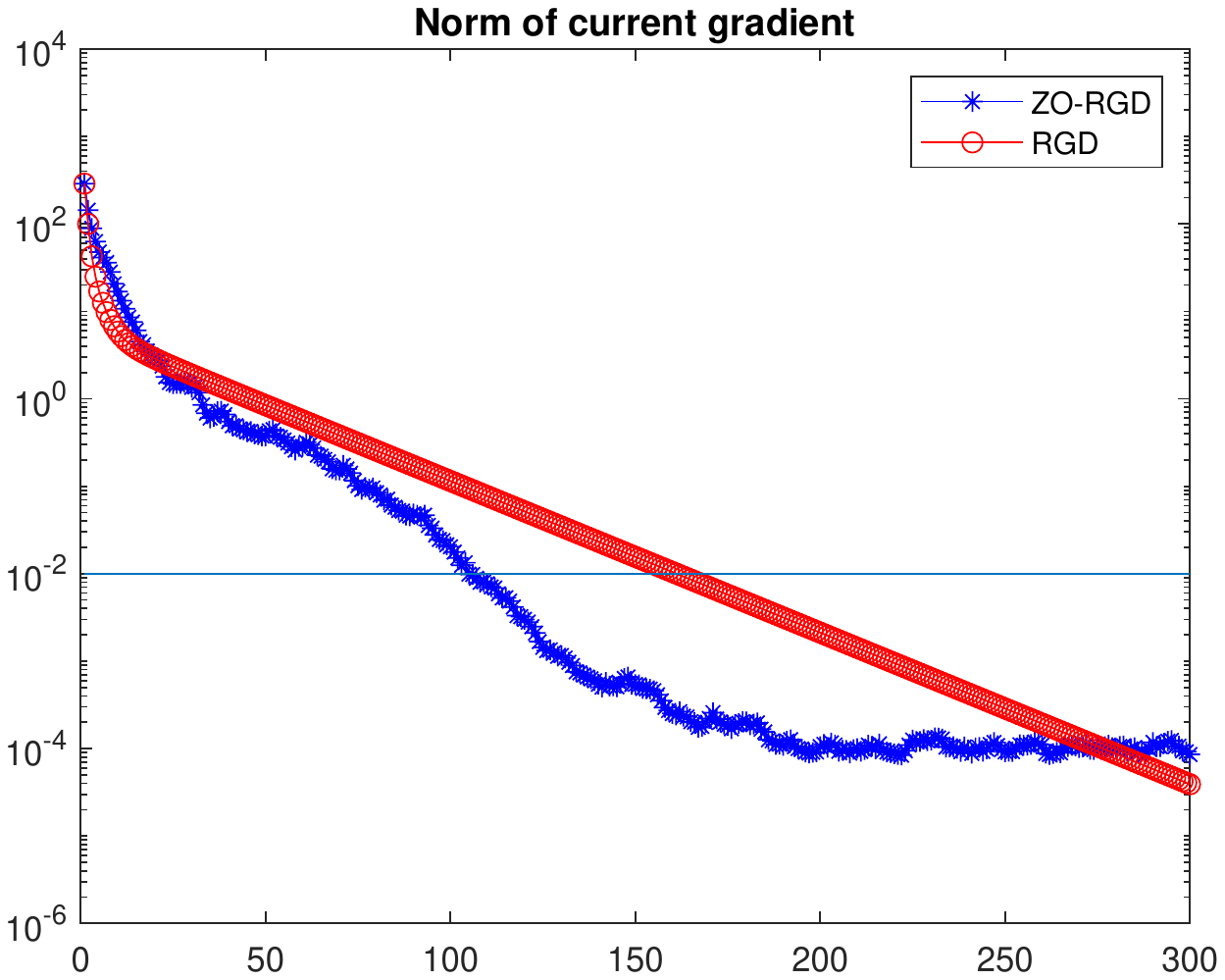}}
\caption{The convergence curve of ZO-RGD v.s. RGD. x-axis is the number of iterations and y-axis is the norm of Riemannian gradient at corresponding points. Note that our zeroth-order algorithm does not use gradient information in updates, while the graph still shows the norm of gradient to show the effectiveness of our method. The horizontal lines are the prescribed precisions. }
\label{fig_proscrutes}
\end{center}
\end{figure*}

{\bf Experiment 2: k-PCA \cite{zhang2016fast,tripuraneni2018averaging,zhou2019faster}.} k-PCA on Grassmann manifold is a Rayleigh quotient minimization problem. Given a symmetric positive definite matrix $H\in\R^{n\times n}$, we need to solve $\min_{X\in\operatorname{Gr}(n,p)}-\frac{1}{2}\Tr (X^\top  H X)$. The Grassmann manifold $\operatorname{Gr}(n,p)$ is the set of $p$-dimensional subspaces in $\mathbb{R}^n$. We refer the reader to \cite{absil2009optimization} for more details about the Grassmann quotient manifold. This problem can be written as a finite sum problem: $\min_{X\in\operatorname{Gr}(n,p)}\sum_{i=1}^{n}-\frac{1}{2}\Tr (X^\top  h_i h_i^\top  X)$, where $h_i\in\R^{n}$ and $H=\sum_{i=1}^{n} h_i h_i^\top$. We compare our \texttt{ZO-RSGD} algorithm (\cref{algorithm2}) and its first-order counterpart RSGD on this problem. The results are shown in \cref{fig_kpca} (a) and (d). 
In our experiment, we set $n=100$, $p=50$, and the matrix $H$ is generated by $H=AA^\top $, where $A\in\R^{n\times p}$ is a normalized randomly generated data matrix. From \cref{fig_kpca} (a) and (d), we see that the performance of \texttt{ZO-RSGD} is similar to its first-order counterpart RSGD. 

\begin{figure}[t!]
\begin{center}
\subfigure[Experiment 2: $m=20$]{\includegraphics[clip, trim=3.5cm 8cm 4cm 8cm,width=0.3\columnwidth]{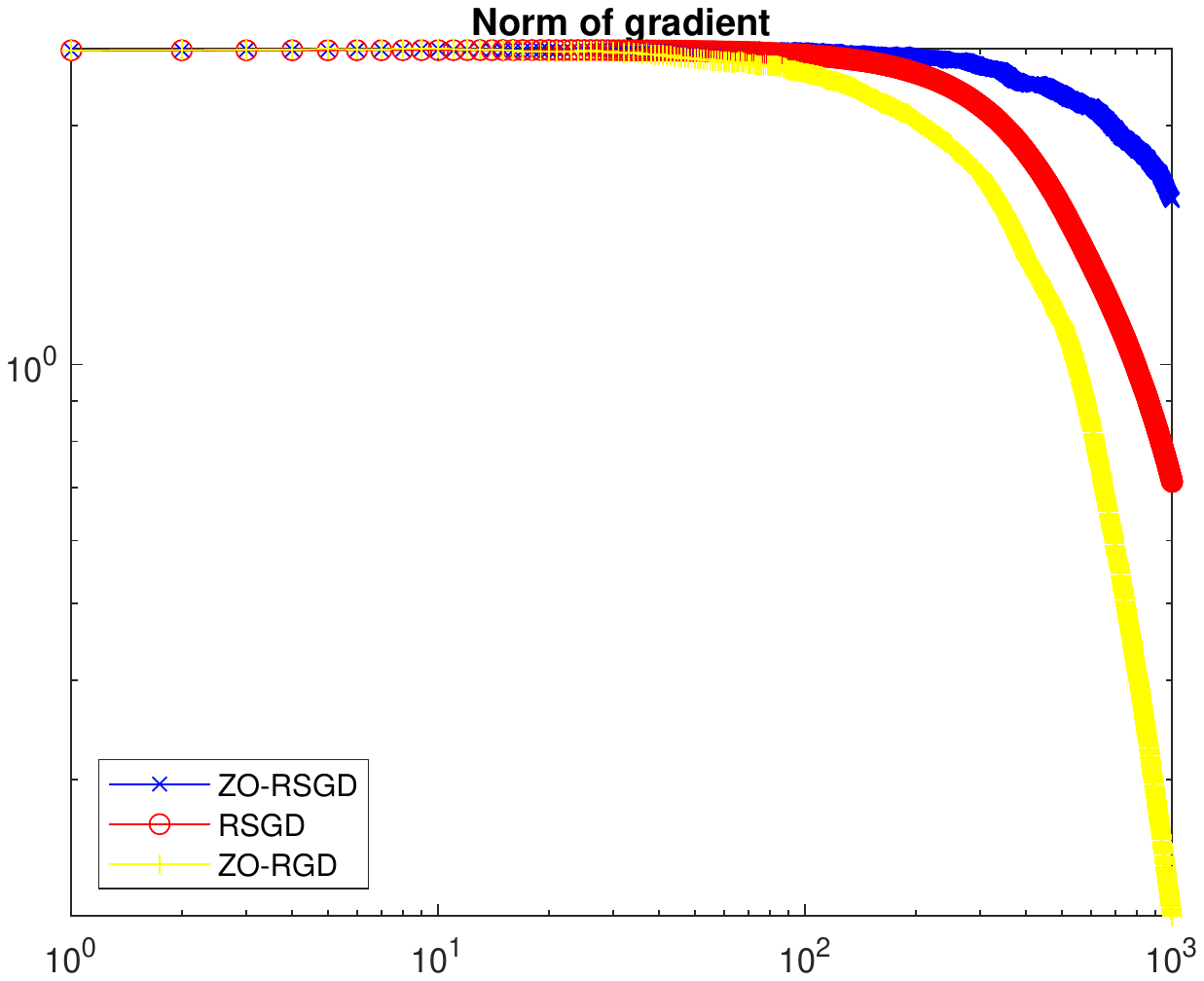}}
\subfigure[Experiment 3]{\includegraphics[clip, trim=3.5cm 8cm 4cm 8cm,width=0.3\columnwidth]{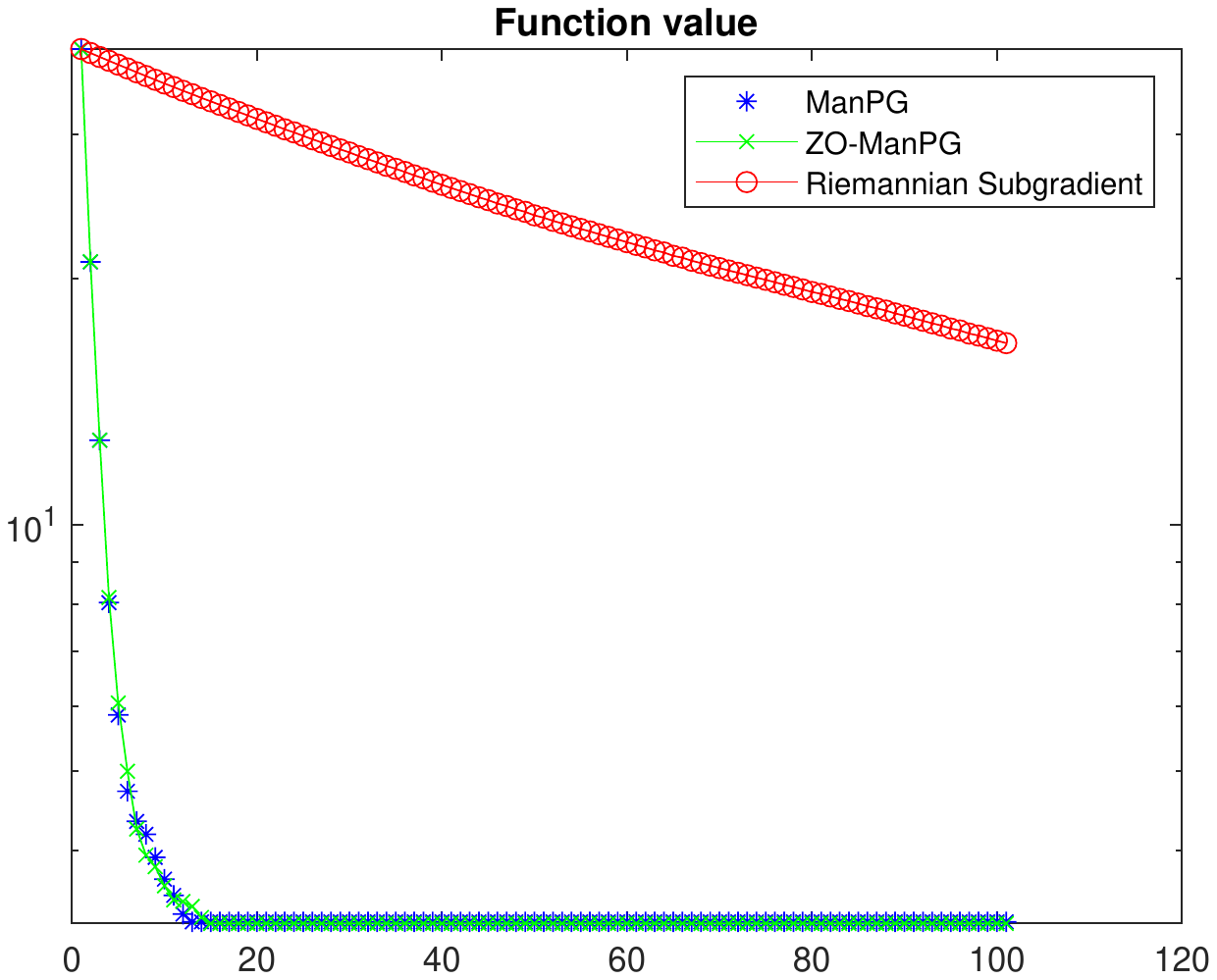}}
\subfigure[Experiment 4]{\includegraphics[clip, trim=3.5cm 8cm 4cm 8cm,width=0.3\columnwidth]{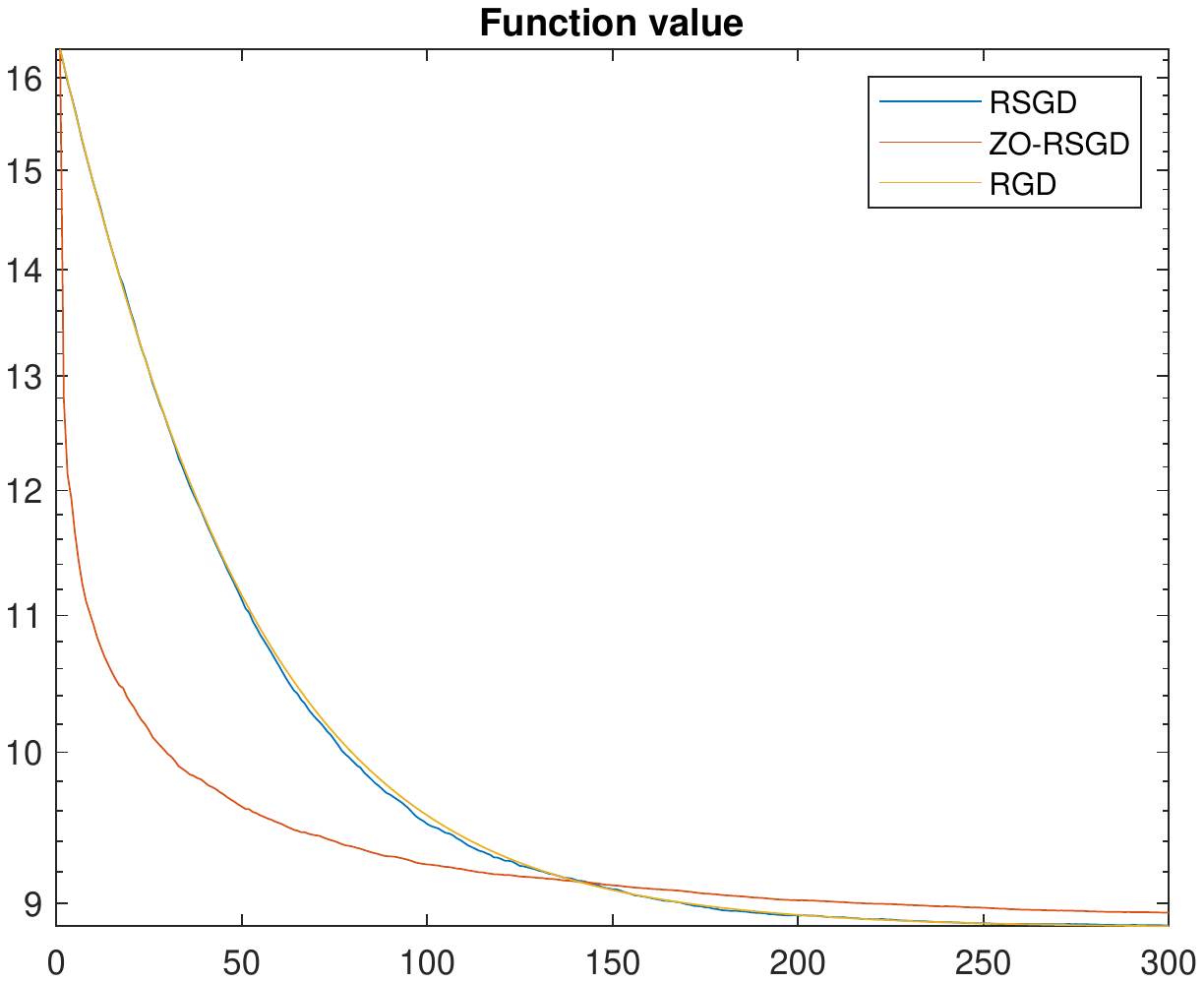}}\\
\subfigure[Experiment 2: $m=40$]{\includegraphics[clip, trim=3.5cm 8cm 4cm 8cm,width=0.3\columnwidth]{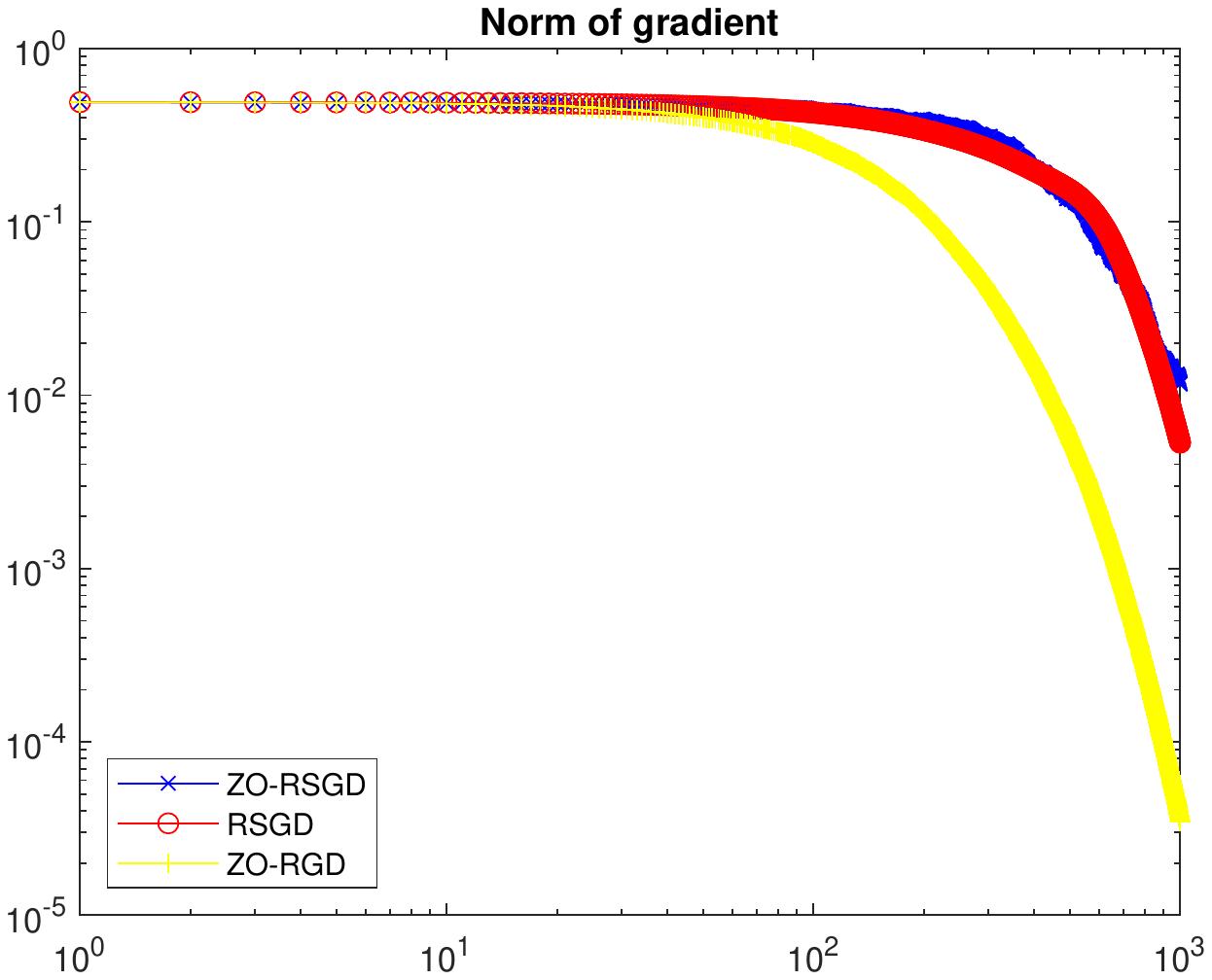}}
\subfigure[Experiment 3]{\includegraphics[clip, trim=3.5cm 8cm 4cm 8cm,width=0.3\columnwidth]{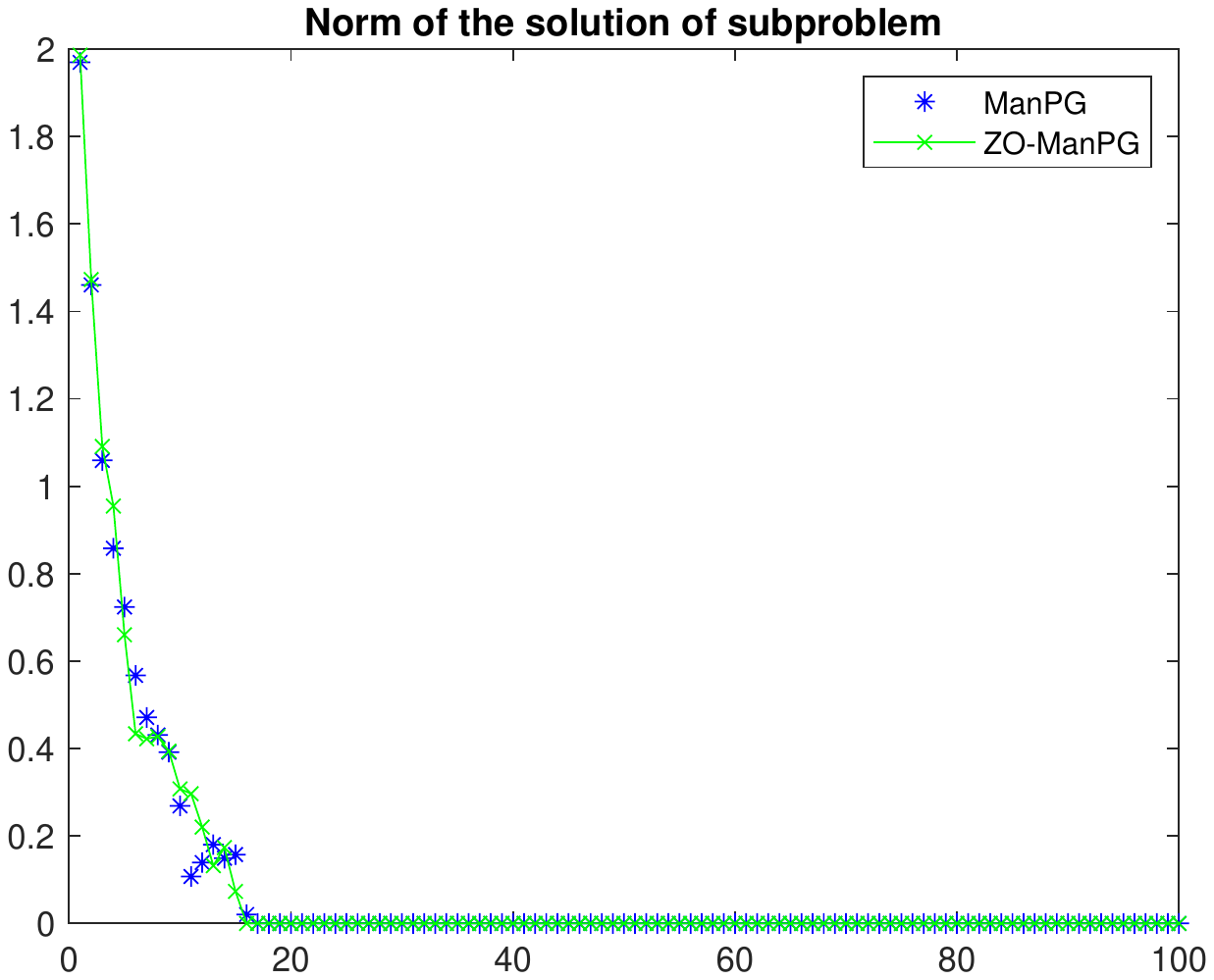}}
\subfigure[Experiment 4]{\includegraphics[clip, trim=3.5cm 8cm 4cm 8cm,width=0.3\columnwidth]{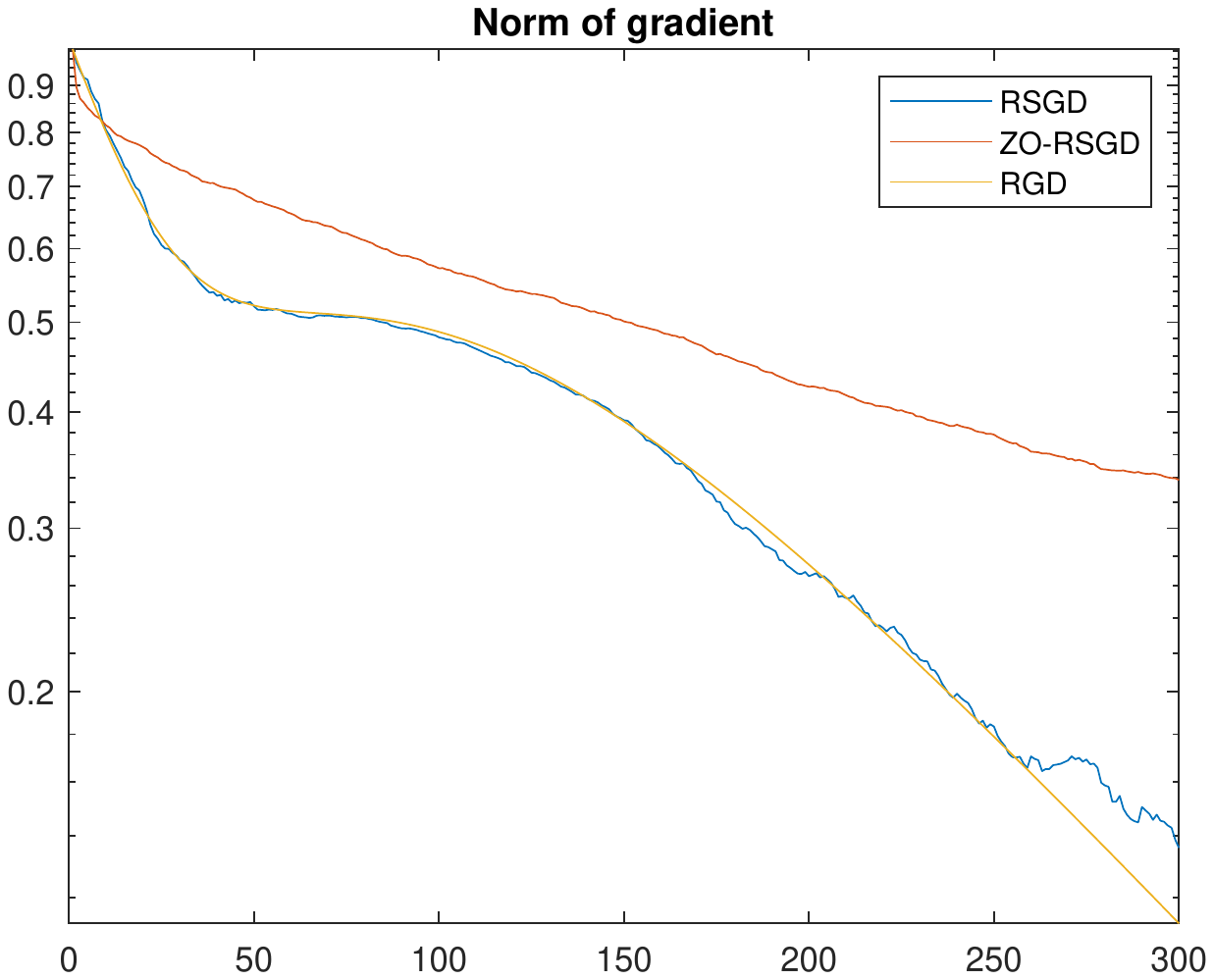}}
\caption{The convergence of three numerical experiments. The $x$-axis always denotes the number of iterations. Figures (a) and (d) are results for k-PCA (Experiment 2). Here three algorithms are compared: \texttt{ZO-RSGD} (\cref{algorithm2}), RSGD, and \texttt{ZO-RGD} (\cref{algorithm1}). Figures (b) and (e) are results for sparse PCA (Experiment 3) in which the $y$-axis of Figure (e) denotes the norm of $v_k$ in  \eqref{eq:manpg} (for ManPG) and \eqref{sub_prob} (for \texttt{ZO-ManPG}), which actually measures the optimality of the problem. Here three algorithms are compared: \texttt{ZO-ManPG} (\cref{algorithm_manpg}), ManPG and Riemannian subgradient method. Figures (c) and (f) are results for Karcher mean of PSD matrices problem (Experiment 4). Here three algorithms are compared: RSGD, \texttt{ZO-RSGD} (\cref{algorithm2}), and RGD.} 
\label{fig_kpca}
\end{center}
\end{figure}

{\bf Experiment 3: Sparse PCA \cite{Jolliffe2003,Zou-spca-2006,zou2018selective}.} The sparse PCA problem, arising in statistics, is a Riemannian optimization problem over the Stiefel manifold with nonsmooth objective: $\min_{X\in \St(n, p)}-\frac{1}{2}\Tr (X^\top  A^\top A X)+\lambda \|X\|_1$. Here, $A\in \R^{m\times n}$ is the normalized data matrix. We compare our \texttt{ZO-ManPG} (\cref{algorithm_manpg}) with ManPG \cite{chen2018proximal} and Riemannian subgradient method \cite{li2019nonsmooth}. In our numerical experiments, we chose $(m,n,p)=(50,100,10)$, and entries of $A$ are drawn from Gaussian distribution and rows of $A$ are then normalized. The comparison results are shown in \cref{fig_kpca} (b) and (e). These results show that our \texttt{ZO-ManPG} is comparable to its first-order counterpart ManPG and they are both much better than the Riemannian subgradient method. 


{\bf Experiment 4: Karcher mean of given PSD matrices \cite{bini2013computing,zhang2016first,kasai2018riemannian}.} Given a set of positive semidefinite (PSD) matrices $\{A_i\}_{i=1}^{n}$ where $A_i\in\R^{d\times d}$ and $A_i\succeq 0$, we want to calculate their Karcher mean: $ \min _{X \in \mathcal{S}_{++}^{d}} \frac{1}{2 n} \sum_{i=1}^{n}\left(\operatorname{dist}\left(X, A_{i}\right)\right)^{2}$, where $\operatorname{dist}\left(X, Y\right) = \|\operatorname{logm}(X^{-1/2} Y X^{-1/2})\|_{F}$ ($\operatorname{logm}$ stands for matrix logarithm) represents the distance along the corresponding geodesic between the two points $X, Y \in \mathcal{S}_{++}^{d}$. 
This experiment serves as an example of optimizing geodesically convex functions over Hadamard manifolds, with \texttt{ZO-RSGD} (\cref{algorithm2}).
In our numerical experiment, we take $d=3$ and $n=500$. 
We compare our \texttt{ZO-RSGD} algorithm with its first-order counterpart RSGD and RGD. The results are shown in \cref{fig_kpca} (c) and (f), and from these results we see that ZO-RSGD is comparable to its first-order counterpart RSGD in terms of function value, though it is inferior to RSGD and RGD in terms of the size of the gradient. 

{\bf Experiment 5: Procrustes problem with \texttt{ZO-RSCRN}.} Here, we consider the Procrustes problem in Experiment 1 and use the \texttt{ZO-RSCRN} with both estimated gradients and Hessians. Following~\cite{agarwal2018adaptive}, we use the gradient norm as a performance measure (although the algorithm converges to local-minimzers). We use the Lanczos method (specifically Algorithm 2 from~\cite{agarwal2018adaptive}) for solving the sub-problem in Step 4.  Furthermore, as we are estimating the second order information, we set $n=6$ and $p=4$ and consider $\epsilon=10^{-3}$. In  Figure~\ref{fig:robotics}, (a), we plot the gradient norm versus iterations for Riemannian Stochastic Cubic-Regularized Newton method in the zeroth order and second-order setting. We notice that the zeroth-order method compares favourably to the second-order counterpart in terms of iteration complexity. Admittedly, scaling up the~\texttt{ZO-RSCRN} method to work in higher-dimensions, based on variance reduction techniques, is an interesting problem that we plan to tackle as future work.


\subsection{Real world applications}\label{sec:revisitrealworld}


\noindent {\bf Black-box stiffness control for robotics.} We now study the first motivating example discussed in Section~\ref{sec:roboticsapp} on the control of robotics with the policy parameter being the stiffness matrix $\boldsymbol{K}^{\mathcal{P}} \in \mathcal{S}^d_{++}$, see \cite{jaquier2020bayesian} for more engineering details. Mathematically, given the current position of robot $\hat{\boldsymbol{p}}$ and current speed $\dot{\boldsymbol{p}}$, the task is to minimize
\begin{equation}\label{robotics_function}
f(\boldsymbol{K}^{\mathcal{P}})=w_{\boldsymbol{p}}\|\hat{\boldsymbol{p}}-\boldsymbol{p}\|^{2}+w_{d} \operatorname{det}(\boldsymbol{K}^{\mathcal{P}})+w_{c} \operatorname{cond}(\boldsymbol{K}^{\mathcal{P}})    
\end{equation}
with $\boldsymbol{p}$ being the new position, and $\operatorname{cond}$ is the condition number. With a constant external force $\boldsymbol{f}^{e}$ applied to the system, we have the following identity which solves $\boldsymbol{p}$ by $\boldsymbol{K}^{\mathcal{P}}$: $\boldsymbol{f}^{e}=\boldsymbol{K}^{\mathcal{P}}(\hat{\boldsymbol{p}}-\boldsymbol{p})-\boldsymbol{K}^{\mathcal{D}} \dot{\boldsymbol{p}}$, where the damping matrix $\boldsymbol{K}^{\mathcal{D}}=\boldsymbol{K}^{\mathcal{P}}$ for critical damped case. As the stiffness matrix is a positive definite matrix, the above optimization problem is a Riemannian optimization problem over the positive definite manifold (where the manifold structure is the same as the Karcher mean problem). The function $f$ is not known analytically and following~\cite{jaquier2020bayesian}, we use a simulated setting for a robot (7-DOF Franka Emika Panda robot) to evaluate the function $f$ for a given value of $\boldsymbol{K}^{\mathcal{P}}$, with the same parameters as in \cite{jaquier2020bayesian}. We compare our \texttt{ZO-RGD} method with Euclidean Zeroth-order gradient descent (ZO-GD) method \cite{balasubramanian2019zeroth}. We test the cases when $d=2$ and $d=3$ for minimizing function $f$ w.r.t $\boldsymbol{K}^{\mathcal{P}}$, and the results are shown in Figure \ref{fig:robotics}, (b) and (c). In our experiments, the stepsize of ZO-GD is $3\times10^{-4}$ and \texttt{ZO-RGD} is $10^{-3}$.  Note that for ZO-GD method, one has to project the matrix back to the positive definite set, whereas the \texttt{ZO-RGD} method intrinsically guarantees that the iterates are positive definite, thus is much more stable. Also, due to the fact that \texttt{ZO-RGD} is more stable, the stepsize of \texttt{ZO-RGD} can be larger than ZO-GD, which results in faster convergence. 

\begin{figure}[t!]
\begin{center}
\subfigure[SCRN]{\includegraphics[width=5.2cm, height=4.17cm]{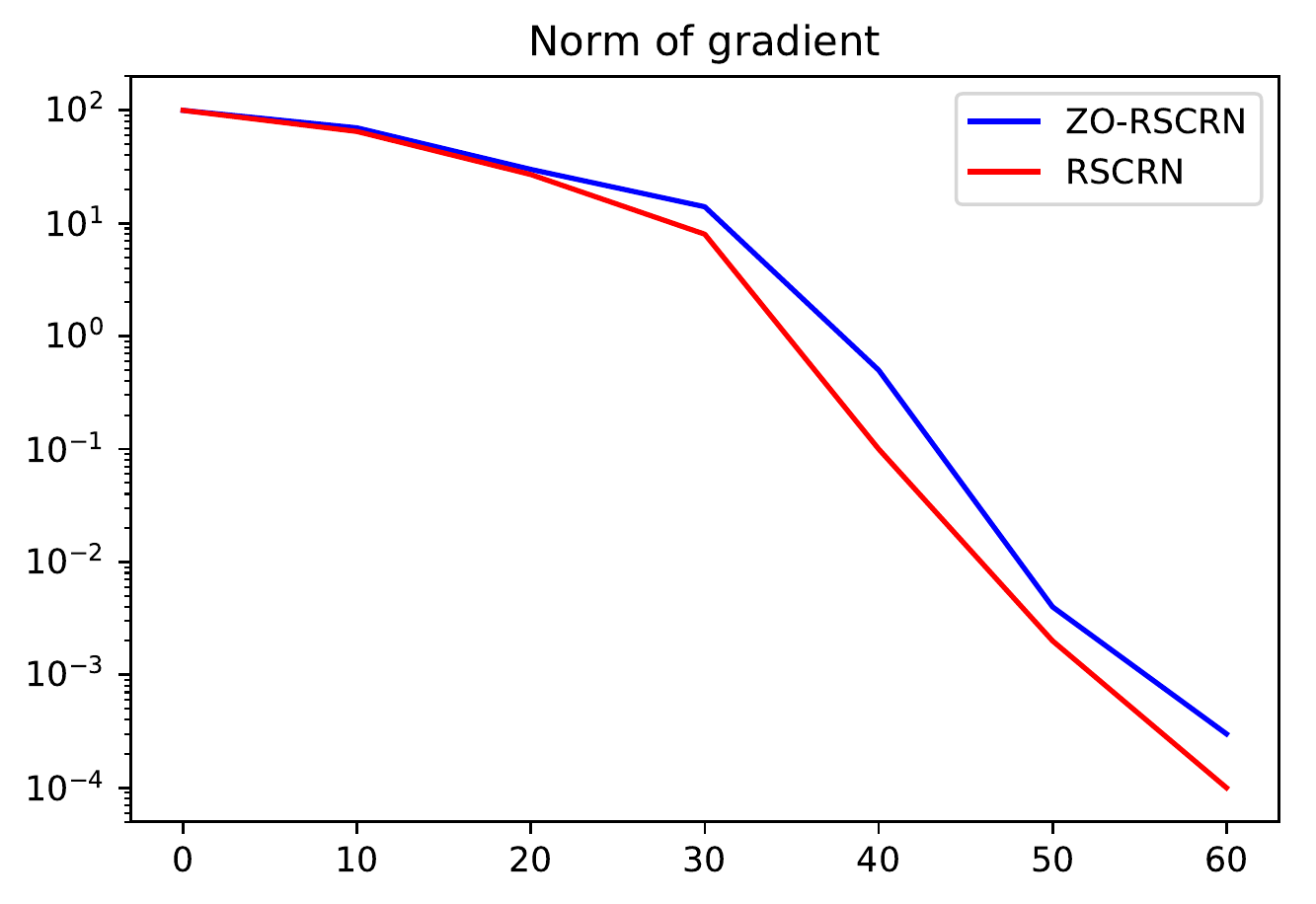}}
\subfigure[2d case]{\includegraphics[clip, trim=4.5cm 9cm 4.5cm 8cm,width=0.30\columnwidth]{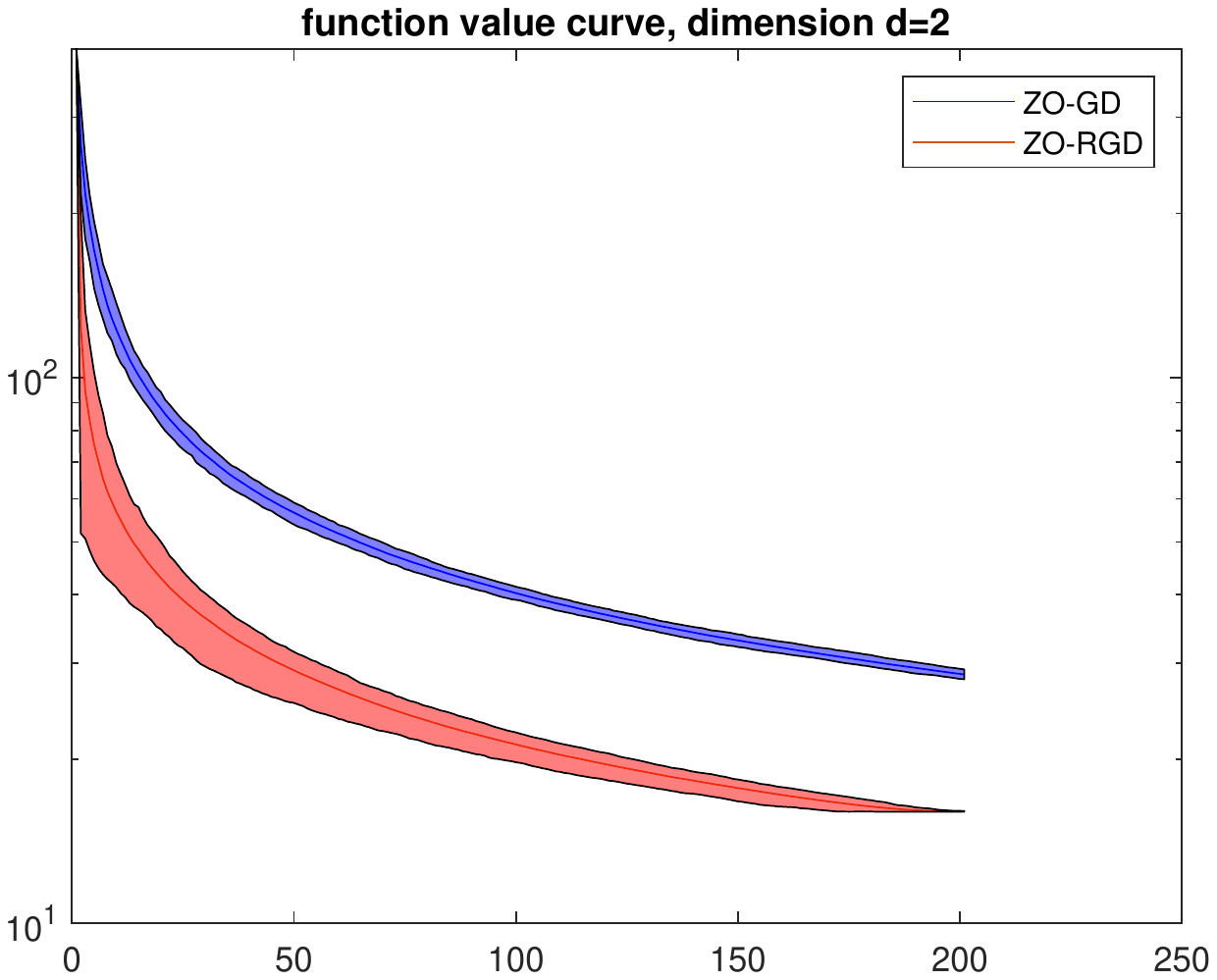}}
\subfigure[3d case]{\includegraphics[clip, trim=4.5cm 9cm 4.5cm 8cm,width=0.30\columnwidth]{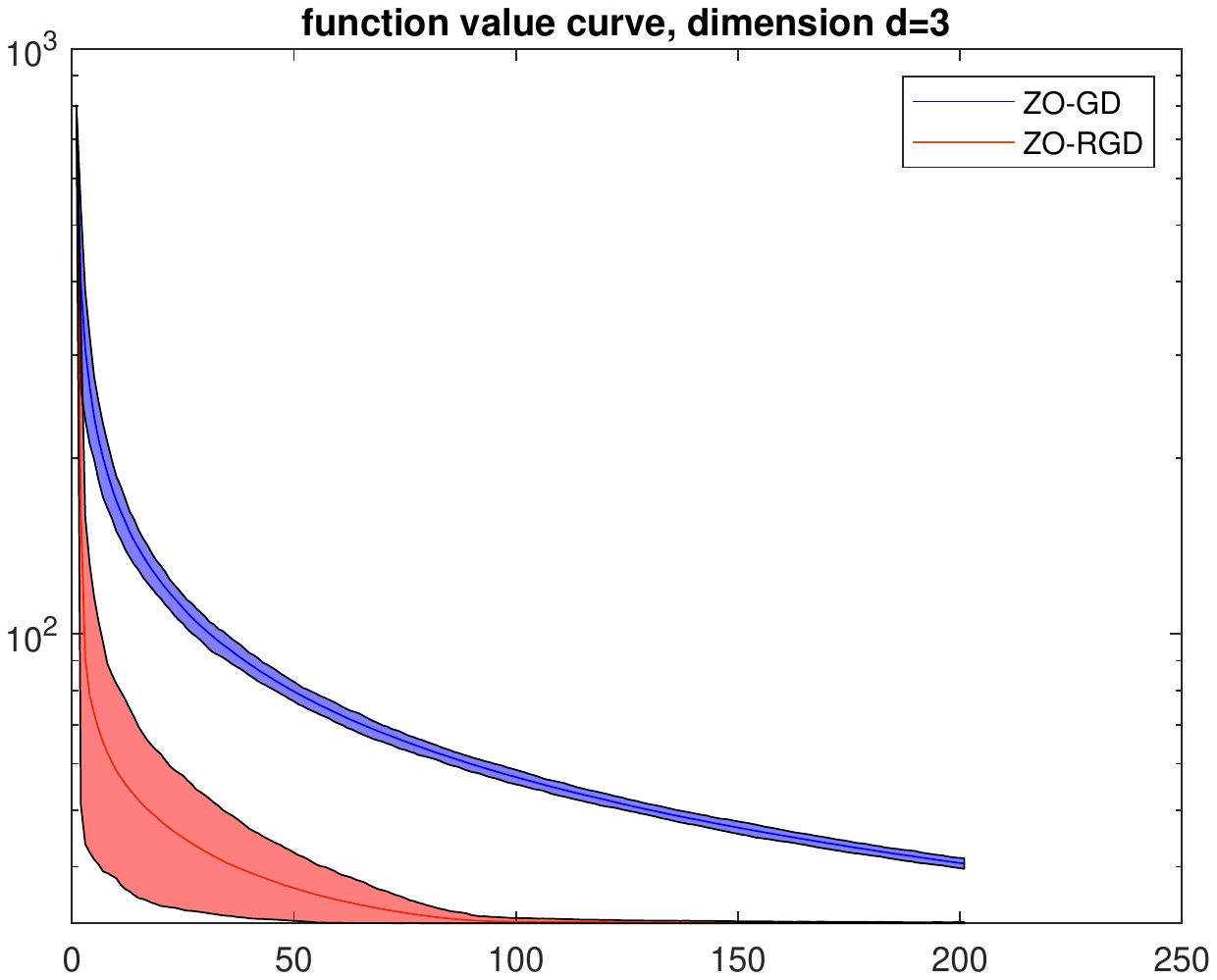}}
\caption{Figure (a) corresponds to Experiment 5. Figures (b) and (c) correspond to the experiments on the robotic minimization function in \eqref{robotics_function}. The $x$-axis in all figures correspond to iteration number. }
\label{fig:robotics}
\end{center}
\end{figure}

\noindent {\bf Zeroth-order black-box attack on Deep Neural Networks (DNNs).}  
We now return to the motivating example described in Section~\ref{sec:zoattackapp} and propose our black-box attack algorithm, as stated in Algorithm~\ref{algorithm_black_attack} (in Appendix \ref{sec:blackboxdetails}). For the sake of comparison, we also assume the architecture of the DNN is known and use ``white-box" attacks based on first-order Riemannian optimization methods (Algorithm \ref{algorithm_white_attack}) and compare against the PGD attack~\cite{madry2017towards}, which does not explicitly enforce any constraints on the perturbed training data. For simplicity, we assume the manifold is a sphere. That is, we assume that the perturbation set $S$ is given by $S(R)=\{\delta:\|\delta\|_2=R\}$, where $R$ is the radius of the sphere. This is consistent with the optimal $\ell_2$-norm attack studied in the literature \cite{lyu2015unified}. Furthermore, the sphere constraint guarantees that the perturbed image is always in a certain distance from the original image. We start our zeroth-order attack from a perturbation and maximize the loss function on the sphere. For the black-box method, to accelerate the convergence, we use Euclidean zeroth-order optimization to find an appropriate initial perturbation (Algorithm \ref{algorithm_black_attack_pre}). It is worth noting that the zeroth-order attack in \cite{chen2017zoo,tu2019autozoom} has a non-smooth objective function, which has $\mathcal{O}({n^3}/{\epsilon^3})$ complexity to guarantee convergence \cite{nesterov2017random}, whereas the complexity needed for our method is  $\mathcal{O}({d}/{\epsilon^2})$. 

We first tested our method on the giant panda picture in the Imagenet data set \cite{imagenet_cvpr09}, with the network structure the Inception v3 network structure \cite{szegedy2016rethinking}. The attack radius in our algorithm is proportional to the $\ell_2$ norm of the original image. Both white-box and black-box Riemannian attacks are successful, which means that they both converge to images that lie in a different image class (i.e. with a different label), see Figure \ref{fig:panda_attack}. We also tested Algorithms \ref{algorithm_white_attack} and \ref{algorithm_black_attack} on the CIFAR10 dataset, and the network structure we used is the VGG net \cite{simonyan2014very}. The corresponding results are provided in Appendix \ref{sec:blackboxdetails}.

\begin{figure}[t!]
\begin{center}
\subfigure[Original image]{\includegraphics[width=0.24\columnwidth]{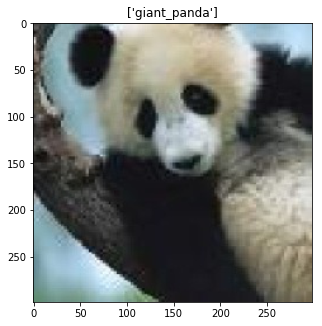}}
\subfigure[PGD attack]{\includegraphics[width=0.24\columnwidth]{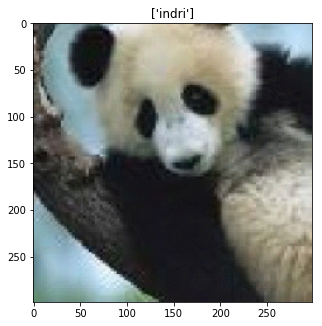}}
\subfigure[First-order attack on the sphere]{\includegraphics[width=0.24\columnwidth]{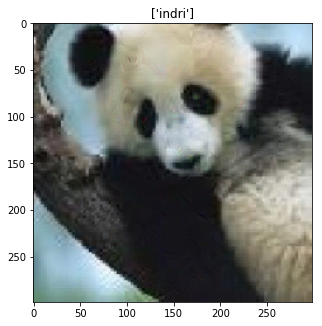}}
\subfigure[Zeroth-order attack on the sphere]{\includegraphics[width=0.24\columnwidth]{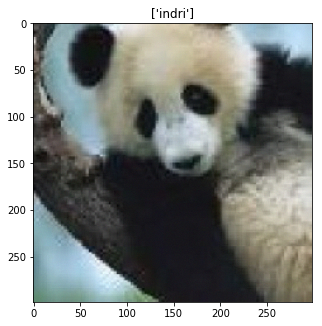}}

\caption{The attack on giant panda picture \cite{imagenet_cvpr09}. (a): the original image; (b): the PGD attack with a small diameter; (c) Riemannian attack (Algorithm \ref{algorithm_white_attack}) on the sphere with the same diameter; (d): Riemannian zeroth-order attack (Algorithm \ref{algorithm_black_attack}). 'Indri' refers to the class to which the original image is misclassified to.}
\label{fig:panda_attack}
\end{center}
\end{figure}

\section{Conclusions}

In this paper, we proposed zeroth-order algorithms for solving Riemannian optimization over submanifolds embedded in Euclidean space in which only noisy function evaluations are available for the objective. These algorithms adopt new estimators of the Riemannian gradient and Hessian from noisy objective function evaluations, based on a Riemannian version of the Gaussian smoothing technique.
The proposed estimators overcome the difficulty of the non-linearity of the manifold constraint and the issues that arise in using Euclidean Gaussian smoothing techniques when the function is defined only over the manifold. The iteration complexity and oracle complexity of the proposed algorithms are analyzed for obtaining an appropriately defined $\epsilon$-stationary point or $\epsilon$-approximate local minimum. The established complexities are independent of the dimension of the ambient Euclidean space and only depend on the intrinsic dimension of the manifold. Numerical experiments demonstrated that the proposed zeroth-order algorithms are comparable to their first-order counterparts.


%

\appendix
\section{Geodesically Convex Problem}\label{sec:geoconvex}
In this section we consider the smooth problem \eqref{prob:nonconvex-smooth} where $f$ is geodesically convex. The definition of geodesic convexity is given below (see, e.g., \cite{zhang2016first}).
\begin{definition}
    A function $f:\M\rightarrow\R$ is geodesically convex if for all $x,y\in\M$, there exists a geodesic $\gamma$ such that $\gamma(0)=x$, $\gamma(1)=y$ and $\forall t\in[0,1]$ we have $f(\gamma(t))\leq (1-t)f(x)+t f(y)$. 
\end{definition}
It can be shown that this definition is equivalent to, $f(\Exp_x(\eta))\geq f(x)+\langle g_x, \eta \rangle_x,\ \forall \eta\in T_x\M$, where $g_x$ is a subgradient of $f$ at $x$, $\Exp$ is the exponential mapping, and $\langle\cdot,\cdot\rangle_x$ is the inner product in $T_x\M$ induced by Riemannian metric $d(\cdot,\cdot)$. When $f$ is smooth, we have $g_x=\grad f(x)$, the Riemannian gradient at $x$. It is known that geodesically convex function is a constant on compact manifolds. Therefore, in this subsection, we assume that $\M$ is an Hadamard manifold \cite{bishop1969manifolds,gromov1978mianiifolds}, and $\X$ is a bounded and geodesically convex subset of $\M$.

\begin{assumption}\label{geo_cvx}
    The subset $\X$ of Hadamard manifold $\M$ is bounded by diameter $D$, and the sectional curvature is lower bounded by $\varrho$. The function $F(x,\xi)$ is geodesically convex w.r.t. $x\in\M$, almost everywhere for $\xi$ (and hence $f$ is geodesically convex).
\end{assumption}

The following lemma from \cite{zhang2016first} is useful for our subsequent analysis. Here $\mathcal{P}_{\X}$ denotes the projection onto $\X$, i.e., $\mathcal{P}_{\mathcal{X}}(x):=\left\{y \in \mathcal{X}: d(x, y)=\inf _{z \in \mathcal{X}} d(x, z)\right\}$.
\begin{lemma}[\cite{zhang2016first}]\label{lemma_geo_cvx}
    For any Riemannian manifold $\M$ where the sectional curvature is lower bounded by $\varrho$ and any points $x$, $x_s\in\M$, the update 
    \[x_{s+1}=\mathcal{P}_{\X}(\Exp_{x_s}(-\eta_s g_s))\] 
    satisfies: $\langle -g_s, x-x_s \rangle\leq \frac{1}{2 \eta_s}(d^2(x_s,x)-d^2(x_{s+1},x))+\frac{\zeta(\varrho,d(x_s,x)) \eta_s}{2}\|g_s\|^2$,
    where $d(\cdot,\cdot)$ is the Riemannian metric defined globally on $\M$, and $\zeta(\varrho,c):= {c\sqrt{|\varrho|}}/{\tanh(c\sqrt{|\varrho|})}$.
\end{lemma}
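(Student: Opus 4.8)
The plan is to reduce the statement to a single comparison-geometry inequality and then remove the projection. Throughout I read the symbol ``$x-x_s$'' appearing in the statement as the Riemannian logarithm $\Exp_{x_s}^{-1}(x)\in T_{x_s}\M$, i.e.\ the initial velocity of the minimizing geodesic from $x_s$ to $x$, which has norm $d(x_s,x)$; this is the convention of \cite{zhang2016first}. The only nontrivial ingredient I would invoke is the trigonometric ``law of cosines'' bound for geodesic triangles in a manifold whose sectional curvature is bounded below by $\varrho$: if such a triangle has a vertex $p$ where two sides of lengths $\beta$ and $\gamma$ meet with angle $A$, and the opposite side has length $\alpha$, then
\begin{equation*}
\alpha^2 \;\le\; \zeta(\varrho,\gamma)\,\beta^2 + \gamma^2 - 2\beta\gamma\cos A .
\end{equation*}
I would cite this directly from \cite{zhang2016first}; its proof compares the hinge determined by $(p;\beta,\gamma,A)$ with the corresponding hinge in the constant-curvature model space $M_\varrho$ via Toponogov's comparison theorem (which can only lengthen the third side), and then bounds the hyperbolic law of cosines in $M_\varrho$, the function $\zeta$ being exactly what absorbs the curvature-induced loss.

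First I would apply this inequality to the geodesic triangle with vertices $x_s$, $y_{s+1}:=\Exp_{x_s}(-\eta_s g_s)$, and $x$, choosing $p=x_s$, taking the $\gamma$-side to be the geodesic from $x_s$ to $x$ (so $\gamma=d(x_s,x)$) and the $\beta$-side to be the geodesic from $x_s$ to $y_{s+1}$ (so $\beta=d(x_s,y_{s+1})=\eta_s\|g_s\|$); then $\alpha=d(y_{s+1},x)$ and $A$ is the angle at $x_s$ between $\Exp_{x_s}^{-1}(x)$ and the initial velocity $-\eta_s g_s$ of the geodesic to $y_{s+1}$. Since $\beta\gamma\cos A=\langle -\eta_s g_s,\Exp_{x_s}^{-1}(x)\rangle=\eta_s\langle -g_s,\Exp_{x_s}^{-1}(x)\rangle$, the inequality becomes
\begin{equation*}
d^2(y_{s+1},x)\;\le\;\zeta(\varrho,d(x_s,x))\,\eta_s^2\|g_s\|^2 + d^2(x_s,x) - 2\eta_s\langle -g_s,\Exp_{x_s}^{-1}(x)\rangle .
\end{equation*}
Rearranging to isolate $\langle -g_s,\Exp_{x_s}^{-1}(x)\rangle$ and dividing by $2\eta_s>0$ then yields precisely the asserted bound, but with $y_{s+1}$ in place of $x_{s+1}$.

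To conclude, I would remove the projection. In the setting in which this lemma is used, $\M$ is a Hadamard manifold and $\X\subseteq\M$ is closed and geodesically convex, so the metric projection $\mathcal{P}_\X$ is well-defined and non-expansive; hence $d(x_{s+1},x)=d(\mathcal{P}_\X(y_{s+1}),x)\le d(y_{s+1},x)$ for every $x\in\X$, and replacing $d^2(y_{s+1},x)$ by the not-larger quantity $d^2(x_{s+1},x)$ only weakens the right-hand side, which gives the claim. The one genuine obstacle is the trigonometric comparison inequality stated above, which truly requires comparison geometry and is the only place where the lower curvature bound $\varrho$ (hence the factor $\zeta$) enters; I would treat it as a cited black box. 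A small caveat worth flagging is that the bare statement permits any manifold with curvature at least $\varrho$, whereas non-expansiveness of $\mathcal{P}_\X$ in the last step needs non-positive curvature --- consistent with the lemma being invoked only under the Hadamard assumption of the surrounding section.
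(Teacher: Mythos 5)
Your proposal is correct. Note that the paper does not prove this lemma at all --- it is imported verbatim from \cite{zhang2016first} --- so there is no in-paper argument to compare against; your reconstruction is exactly the argument in that reference: the hinge/Toponogov comparison yielding the trigonometric bound $\alpha^2 \le \zeta(\varrho,\gamma)\beta^2 + \gamma^2 - 2\beta\gamma\cos A$, applied to the triangle $(x_s,\Exp_{x_s}(-\eta_s g_s),x)$ with $\beta\gamma\cos A = \eta_s\langle -g_s,\Exp_{x_s}^{-1}(x)\rangle$, followed by rearrangement and the non-expansiveness of the metric projection onto a closed geodesically convex set. Your closing caveat is also apt: the projection step genuinely needs the Hadamard (non-positive curvature) setting rather than only a lower curvature bound, which is consistent with the paper invoking this lemma solely under Assumption A.1 of Appendix A, where $\M$ is assumed Hadamard and $\X$ bounded and geodesically convex.
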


In this subsection, we consider the following algorithm, which is a special case of \cref{algorithm2}. 
\begin{equation}\label{algorithm_geo_cvx}
    x_{k+1}=\mathcal{P}_{\X}(\Exp_{x_k}(-\eta_k \bar{g}_{\mu,\xi} (x_k))).
\end{equation}

We now present our result for obtaining an $\epsilon$-optimal solution of \eqref{prob:nonconvex-smooth}.

\begin{theorem}\label{thm:geo_cvx}
    Let the manifold $\M$ and the function $f:\M\rightarrow\R$ satisfy Assumptions \ref{L-manifold-smooth}, \ref{sto_assumption}, and \ref{geo_cvx}. Suppose \cref{algorithm2} is run with the update in \cref{algorithm_geo_cvx} and with $\eta_k={1}/{L_g}$. Denote $\Delta_k=\E_{\mathcal{U}_k,\Xi_k}(f(x_k)-f^*$). To have $\min_{1\leq k\leq t}\Delta_k\leq \epsilon$, we need the smoothing parameter $\mu$, number of sampling at each iteration $m$ and the number of iteration $t$ to  be respectively of order:
    \begin{equation}\label{thm:geo_cvx-param}
        \mu = \mathcal{O}({\sqrt{\epsilon}}/{d^{3/2}}),\ m = \mathcal{O}({d}/{\epsilon}),\ t=\mathcal{O}({1}/{\epsilon}).
    \end{equation}
    Hence, the zeroth-order oracle complexity is $N = mt= \mathcal{O}({d}/{\epsilon^2})$.
\end{theorem}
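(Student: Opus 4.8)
The plan is to run the projected geodesically-convex stochastic (sub)gradient analysis behind \cref{lemma_geo_cvx} (from \cite{zhang2016first}), but fed with the \emph{biased, noisy} estimator $\bar g_{\mu,\xi}$ instead of $\grad f$, and then to clean up the resulting error terms using the bias bound in \cref{lemma123}(a), the moment bound in \cref{lemma:stocgradzor}, and — crucially — the one-step decrease already established inside the proof of \cref{thm2}. First I would apply \cref{lemma_geo_cvx} to the update \eqref{algorithm_geo_cvx} with $g_s=\bar g_{\mu,\xi}(x_s)$, step size $\eta_s=1/L_g$, and comparison point $x=x^*$ a minimizer of $f$ on $\X$ (so $x^*\in\X$ and $d(x_s,x^*)\le D$). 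Since $\zeta(\varrho,\cdot)$ is nondecreasing we may replace $\zeta(\varrho,d(x_s,x^*))$ by $\zeta_D:=\zeta(\varrho,D)$, giving for each $s$
\[
\langle -\bar g_{\mu,\xi}(x_s),\Exp_{x_s}^{-1}(x^*)\rangle_{x_s}\le \tfrac{L_g}{2}\big(d^2(x_s,x^*)-d^2(x_{s+1},x^*)\big)+\tfrac{\zeta_D}{2L_g}\|\bar g_{\mu,\xi}(x_s)\|^2 .
\]

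Next I would take expectations. Conditioning on $x_s$ and writing $\bar g_s:=\E[\bar g_{\mu,\xi}(x_s)\mid x_s]=\E_{u_0}g_\mu(x_s)$, \cref{lemma123}(a) bounds the bias by $\|\bar g_s-\grad f(x_s)\|\le \tfrac{\mu L_g}{2}(d+3)^{3/2}$, while geodesic convexity (\cref{geo_cvx}) gives $\langle -\grad f(x_s),\Exp_{x_s}^{-1}(x^*)\rangle_{x_s}\ge f(x_s)-f^*$; hence the conditional expectation of the left-hand side is at least $(f(x_s)-f^*)-\tfrac{\mu L_g D}{2}(d+3)^{3/2}$. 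For the second moment I would use $\E\|\bar g_{\mu,\xi}(x_s)\|^2\le 2\,\E\|\bar g_{\mu,\xi}(x_s)-\grad f(x_s)\|^2+2\|\grad f(x_s)\|^2$ together with \cref{lemma:stocgradzor} (which uses \cref{sto_assumption}); choosing $m\ge 16(d+4)$ keeps the coefficient of $\|\grad f(x_s)\|^2$ bounded by an absolute constant. Taking total expectations and combining, one obtains for absolute constants $c,c'$
\[
\Delta_s\le \tfrac{L_g}{2}\big(\E d^2(x_s,x^*)-\E d^2(x_{s+1},x^*)\big)+\tfrac{c\,\zeta_D}{L_g}\E\|\grad f(x_s)\|^2+\tfrac{\mu L_g D}{2}(d+3)^{3/2}+\tfrac{c'\zeta_D}{L_g}\Big(\mu^2L_g^2(d+6)^3+\tfrac{(d+4)\sigma^2}{m}\Big).
\]

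The term $\sum_s\E\|\grad f(x_s)\|^2$ cannot just be moved to the left-hand side, because the step size is the fixed $1/L_g$ and $\zeta_D\ge 1$; instead I would reuse the inequality from the proof of \cref{thm2}, namely $\E f(x_{k+1})\le\E f(x_k)-\tfrac{1}{4L_g}\E\|\grad f(x_k)\|^2+\tfrac{1}{2L_g}\big(\mu^2L_g^2(d+6)^3+\tfrac{8(d+4)}{m}\sigma^2\big)$, which telescopes to $\sum_s\E\|\grad f(x_s)\|^2\le 4L_g(f(x_0)-f^*)+2t\big(\mu^2L_g^2(d+6)^3+\tfrac{8(d+4)}{m}\sigma^2\big)$. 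Summing the displayed regret bound over $s=1,\dots,t$, telescoping the $d^2$ terms ($d^2(x_1,x^*)\le D^2$, $d^2(x_{t+1},x^*)\ge 0$), substituting the gradient-sum bound, and dividing by $t$ gives
\[
\min_{1\le s\le t}\Delta_s\le\frac1t\sum_{s=1}^t\Delta_s\le \frac{L_gD^2+C\zeta_D(f(x_0)-f^*)}{2t}+C'\mu L_g(d+3)^{3/2}+C''\frac{\zeta_D}{L_g}\Big(\mu^2L_g^2(d+6)^3+\frac{(d+4)\sigma^2}{m}\Big).
\]
Requiring each of the three groups of terms to be $\mathcal O(\epsilon)$ then forces $t=\mathcal O(1/\epsilon)$, $m=\mathcal O(d/\epsilon)$, and $\mu$ as in \eqref{thm:geo_cvx-param}, and the total number of zeroth-order queries is $N=mt=\mathcal O(d/\epsilon^2)$.

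\textbf{Main obstacle.} The delicate point is the interface between the \emph{biased} zeroth-order estimator and the convex-regret machinery: unlike in the nonconvex analyses (Theorems \ref{thm1}--\ref{thm3}), where the bias always appears squared, here the bias $\|\bar g_s-\grad f(x_s)\|$ enters the regret bound only linearly, through its pairing with $\Exp_{x_s}^{-1}(x^*)$, and it is $\E\|\bar g_{\mu,\xi}(x_s)\|^2$ rather than the reduced deviation $\E\|\bar g_{\mu,\xi}(x_s)-\grad f(x_s)\|^2$ that shows up — this is precisely what dictates how small $\mu$ and how large $m$ must be taken, and it is what makes the use of the \cref{thm2} descent lemma (to absorb the otherwise-uncontrolled $\sum_s\|\grad f(x_s)\|^2$) essential rather than optional. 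A secondary technical point, easily handled from \cref{geo_cvx}, is making sure that $x^*\in\X$, that the metric projection $\mathcal P_\X$ keeps the iterates in $\X$, and that all curvature factors $\zeta(\varrho,d(x_s,x^*))$ are uniformly bounded by $\zeta_D$ along the trajectory.
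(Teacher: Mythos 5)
Your proposal is correct in substance and shares the paper's overall architecture --- apply \cref{lemma_geo_cvx} with $g_s=\bar g_{\mu,\xi}(x_s)$, bound $\E\|\bar g_{\mu,\xi}(x_s)\|^2$ via \cref{lemma:stocgradzor}, and use a separate one-step descent inequality to absorb $\sum_s\E\|\grad f(x_s)\|^2$ --- but it diverges at two points. First, you control the inner-product term by splitting off the bias $\E[\bar g_{\mu,\xi}(x_s)\mid x_s]-\grad f(x_s)$ and paying for it \emph{linearly} against the diameter $D$ via \cref{lemma123}(a); the paper instead invokes convexity directly with $\bar g_{\mu,\xi}$ in place of $\grad f$ (see \eqref{thm:geo_cvx-eq1}), so that all estimator error enters only through the squared deviation $\E\|\bar g_{\mu,\xi}-\grad f\|^2$. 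Your treatment is arguably the more careful of the two (the paper's \eqref{thm:geo_cvx-eq1} silently drops the bias), but it has a quantitative consequence you did not acknowledge: the linear term $\tfrac{\mu L_g D}{2}(d+3)^{3/2}$ is $\mathcal{O}(\epsilon)$ only if $\mu=\mathcal{O}(\epsilon/d^{3/2})$, not the $\mathcal{O}(\sqrt{\epsilon}/d^{3/2})$ of \eqref{thm:geo_cvx-param}; so your route requires a (harmlessly) smaller $\mu$, leaving $t$, $m$, and the oracle complexity $N=mt=\mathcal{O}(d/\epsilon^2)$ unchanged, and your claim that the three error groups ``force $\mu$ as in \eqref{thm:geo_cvx-param}'' should be amended accordingly. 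Second, to absorb $\sum_s\E\|\grad f(x_s)\|^2$ you telescope the \cref{thm2}-type descent inequality and substitute the resulting bound into the summed regret, whereas the paper adds $\tfrac{1}{6\zeta}$ times the regret inequality \eqref{geo_cvx_proof_2} to the descent inequality \eqref{geo_cvx_proof_1} so that the $\|\grad f(x_k)\|^2$ terms cancel exactly at every iteration; the two devices are equivalent up to constants (your substitution merely contributes an extra $\mathcal{O}((f(x_0)-f^*)/t)$ to the transient and re-deposits the same $\mu^2 d^3+d\sigma^2/m$ error terms), and both legitimately hinge on taking $m\gtrsim d$ so that the descent inequality has a strictly negative coefficient on $\|\grad f(x_k)\|^2$. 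The secondary points you flag (monotonicity of $\zeta(\varrho,\cdot)$, $x^*\in\X$, the projection keeping iterates in $\X$) are treated no more carefully in the paper than in your sketch.
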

\begin{proof}{Proof of \cref{thm:geo_cvx}}
    From \cref{L-manifold-smooth} we have that:
    \begin{equation*}
        f(x_{k+1})-f(x_k)\leq -\eta_k\langle \grad f(x_k), \bar{g}_{\mu,\xi} (x_k) \rangle+\frac{L_g}{2}\eta_k^2 \|\bar{g}_{\mu,\xi} (x_k)\|^2.
    \end{equation*}
    Taking $\eta_k=\frac{1}{L_g}$, we have
    \begin{equation*}
    \begin{split}
        f(x_{k+1})-f(x_k)&\leq \frac{1}{2L_g}\left(-2\langle \grad f(x_k), \bar{g}_{\mu,\xi} (x_k) \rangle+ \|\bar{g}_{\mu,\xi} (x_k)\|^2\right) \\
        &=\frac{1}{2L_g}\left(\|\bar{g}_{\mu,\xi} (x_k)-\grad f(x_k)\|^2-\|\grad f(x_k)\|^2\right).
    \end{split}
    \end{equation*}
    Taking expectation with respect to $u_k$ on both sides of the inequality above and taking $m\geq 16(d+4)$, we have (by \cref{stochastic_ineq})
    \begin{equation}\label{geo_cvx_proof_1}
    \begin{split}
        & \E_{u_k}f(x_{k+1})-f(x_k) \\
        \leq & \frac{1}{2L_g}\left(\mu^2 L_g^2(d+6)^3+\frac{8(d+4)}{m}\sigma^2+\left(\frac{8(d+4)}{m}-1\right)\|\grad f(x_k)\|^2\right)\\
        \leq & \frac{\mu^2 L_g(d+6)^3}{2}+\frac{4(d+4)}{m L_g}\sigma^2 -\frac{1}{4 L_g}\|\grad f(x_k)\|^2.
    \end{split}
    \end{equation}
     Now considering the geodesic convexity and \cref{lemma_geo_cvx}, we have
    \begin{align}\label{thm:geo_cvx-eq1}
        f(x_{k+1})-f^*  \leq \langle -\bar{g}_{\mu,\xi} (x_k), \Exp_{x_k}^{-1}(x^*) \rangle \leq \frac{L_g}{2}(d^2(x_k,x^*)-d^2(x_{k+1},x^*))+\frac{\zeta(\varrho, D)\|\bar{g}_{\mu,\xi} (x_k)\|^2}{2 L_g}.
    \end{align}
    From \cref{lemma:stocgradzor} we have 
    \begin{align}\label{thm:geo_cvx-eq2}
    & \E \|\bar{g}_{\mu,\xi} (x_k)\|^2  \leq  2\E \|\bar{g}_{\mu,\xi} (x_k)-\grad f(x_k)\|^2+2\E\|\grad f(x_k)\|^2 \\  \leq & 2\mu^2L_g^2(d+6)^3+\frac{16(d+4)}{m}\sigma^2+\left(\frac{16(d+4)}{m}+2\right)\|\grad f(x_k)\|^2.\nonumber
    \end{align}
    Now take the expectation w.r.t. $u_k$ for both sides of \eqref{thm:geo_cvx-eq1}, and combine with \eqref{thm:geo_cvx-eq2}, we have 
    \begin{align}\label{geo_cvx_proof_2}
        \Delta_{k+1}\leq \frac{L_g}{2}(d^2(x_k,x^*)-d^2(x_{k+1},x^*)) +\frac{\zeta(\varrho, D)}{2 L_g}\left(2\mu^2L_g^2(d+6)^3+\frac{16(d+4)}{m}\sigma^2+3\|\grad f(x_k)\|^2\right). 
    \end{align}
    Multiplying \eqref{geo_cvx_proof_2} with $\frac{1}{6\zeta(\varrho, D)}$, and sum up with \cref{geo_cvx_proof_1}, we have
    \begin{equation*}
        \left(1+\frac{1}{6\zeta}\right)\Delta_{k+1} - \Delta_{k} \leq \frac{L_g}{12\zeta}(d^2(x_k,x^*)-d^2(x_{k+1},x^*)) + \mu^2L_g(d+6)^3+\frac{16(d+4)}{3m L_g}\sigma^2.
    \end{equation*}
    Summing it over $k=0,\ldots,t-1$ we have
    \begin{equation*}
        \Delta_{t} - \Delta_{0}+\frac{1}{6\zeta}\sum_{k=1}^{t}\Delta_{k}\leq \frac{L_g}{12\zeta}d^2(x_0,x^*) + (\mu^2L_g(d+6)^3+\frac{16(d+4)}{3m L_g}\sigma^2)t.
    \end{equation*}
    Equivalently, we have
    \begin{equation*}
        \frac{1}{t}\sum_{k=1}^{t}\Delta_{k}\leq \frac{L_g}{2 t}d^2(x_0,x^*) + 6\zeta (\mu^2L_g(d+6)^3+\frac{16(d+4)}{3m L_g}\sigma^2) + \frac{6\zeta}{t}\Delta_{0},
    \end{equation*}
    which together with \eqref{thm:geo_cvx-param} yields $\min_{1\leq k\leq t}\Delta_k\leq\epsilon$. 
\end{proof}

\section{Proof of \cref{rmk1}}
\begin{proof} {Proof of the improved bound \cref{multi_sample_bound}}

Since $\bar{g}_\mu(x)=\frac{1}{m}\sum_{i=1}^{m}g_{\mu,i}(x)$, we have (denote $\mathcal{U}=\{u_1,...,u_m\}$):
\begin{align*}
    &\E_{\mathcal{U}}\|\bar{g}_\mu(x)-\grad f(x)\|^2\\
    \leq & 2\E_{\mathcal{U}}\|\bar{g}_\mu(x)-\E_{\mathcal{U}}\bar{g}_\mu(x)\|^2+2\|\E_{\mathcal{U}}\bar{g}_\mu(x)-\grad f(x)\|^2 \\
    = & 2\E_{\mathcal{U}}\left\|\frac{1}{m}\sum_{i=1}^{m}\left[g_{\mu,i}(x)-\E_{\mathcal{U}}g_{\mu,i}(x)\right]\right\|^2+2\left\|\frac{1}{m}\sum_{i=1}^{m}\left[\E_{\mathcal{U}}g_{\mu,i}(x)-\grad f(x)\right]\right\|^2 \\
    = & \frac{2}{m^2}\E_{\mathcal{U}}\sum_{i=1}^{m}\|g_{\mu,i}(x)-\E_{\mathcal{U}}g_{\mu,i}(x)\|^2+\frac{2}{m^2}\left\|\sum_{i=1}^{m}\left[\E_{\mathcal{U}}g_{\mu,i}(x)-\grad f(x)\right]\right\|^2 \\
    \leq & \frac{2}{m}\E_{u_1}\|g_{\mu,1}(x)-\E_{\mathcal{U}}g_{\mu,1}(x)\|^2+2\|\E_{u_1}g_{\mu,1}(x)-\grad f(x)\|^2 \\
    \leq & \frac{2}{m}\E_{u_1}\|g_{\mu,1}(x)\|^2+\frac{\mu^2 L_g^2}{2}(d+3)^3 \\
    \leq & \frac{\mu^2}{m}L_g^2(d+6)^3+\frac{4(d+4)}{m}\|\grad f(x)\|^2+\frac{\mu^2 L_g^2}{2}(d+3)^3 \\
    \leq & \mu^2L_g^2(d+6)^3+\frac{4(d+4)}{m}\|\grad f(x)\|^2,
\end{align*}
where the second equality is from the fact that $u_i$ and $u_j$ are independent when $i\not=j$.
\end{proof}

\begin{proof}{Proof of \eqref{multi_sample_bound-2}}
    Following the $L_g$-retraction-smooth, we have: $f(x_{k+1})\leq f(x_k)-\eta_k\langle \bar{g}_{\mu}(x),\grad f(x_k) \rangle+\frac{\eta_k^2 L_g}{2}\|\bar{g}_{\mu}(x)\|^2$. Taking $\eta_k=\hat{\eta}={1}/{L_g}$, we have
    \begin{align*}
        f(x_{k+1})&\leq f(x_k)-\eta_k\langle \bar{g}_{\mu}(x),\grad f(x_k) \rangle+\frac{\eta_k^2 L_g}{2}\|\bar{g}_{\mu}(x)\|^2 \\
        &=f(x_k)+\frac{1}{2 L_g}\left(\|\bar{g}_{\mu}(x)-\grad f(x)\|^2- \|\grad f(x)\|^2\right).
    \end{align*}
    Now take the expectation for the random variables at the iteration $k$ on both sides, we have
    \begin{align*}
        \E_{k} f(x_{k+1})&\leq f(x_k)+\frac{1}{2 L_g}\left(\E_{k}\|\bar{g}_{\mu}(x)-\grad f(x)\|^2- \|\grad f(x)\|^2\right) \\
        \cref{stochastic_ineq}&\leq f(x_k)+\frac{1}{2 L_g}\left(\mu^2 L_g^2(d+6)^3+\left(\frac{4(d+4)}{m}-1\right)\|\grad f(x)\|^2\right).
    \end{align*}
    By choosing $m\geq 8(d+4)$, summing the above inequality over $k=0,\ldots,N$ gives \eqref{multi_sample_bound-2}.    
\end{proof}

\section{Proof of \cref{lemma:stocgradzor}}\label{sec:lemma4.2}
\begin{proof}{}
For the sake of notation, here we denote $\E=\E_{u_0}$. 
From \eqref{stocashtic_oracle} we have
\begin{equation}\label{lemma:stocgradzor-eq1}
\E(\|g_{\mu,\xi}(x)\|^2)=\frac{1}{\mu^2}\E\left[ (F(R_x(\mu u,\xi))-F(x,\xi))^2\|u\|^2 \right].
\end{equation} 
From \cref{L-manifold-smooth} we have
\begin{equation}\label{lemma:stocgradzor-eq2}
\begin{split}
&(F(R_x(\mu u,\xi))-F(x,\xi))^2 \\
=& (F(R_x(\mu u,\xi))-F(x,\xi)-\mu\langle\grad F(x,\xi),u\rangle+\mu\langle\grad F(x,\xi),u\rangle)^2 \\ 
\leq & 2\left(\frac{L_g}{2}\mu^2\|u\|^2\right)^2+2\mu^2 \langle\grad F(x,\xi),u\rangle^2.
\end{split}
\end{equation}
Combining \eqref{lemma:stocgradzor-eq1} and \eqref{lemma:stocgradzor-eq2} yields 
    \begin{equation}\label{lemma:stocgradzor-eq3}
      \begin{split}
        \E(\|g_{\mu,\xi}(x)\|^2)&\leq \frac{\mu^2}{2}L_g^2\E(\|u\|^6)+2\E(\| \langle\grad F(x,\xi),u\rangle u\|^2) \\
        (\cref{corr:rmoment})&\leq \frac{\mu^2}{2}L_g^2(d+6)^3+2\E(\| \langle\grad F(x,\xi),u\rangle u\|^2).
    \end{split}  
    \end{equation}
    Denote our $d$-dimensional tangent space as $\X$. Without loss of generality, suppose $\X$ is the subspace generated by projecting onto the first $d$ coordinates, i.e., $\forall x\in\X$, the last $(n-d)$ elements of $x$ are zeros. Also for brevity, denote $g=\grad F(x,\xi)$. Use $x_i$ to denote the $i$-th coordinate of $u_0$, and $\kappa(d)$ denote the normalization constant for $d$-dimensional Gaussian distribution. For simplicity, denote $x=(x_1,...,x_d)$. We have
    \begin{equation}\label{lemma:stocgradzor-eq4}
        \begin{split}
        & \E(\| \langle\grad F(x,\xi),u\rangle u\|^2) =  \frac{1}{\kappa}\int_{\R^n}\langle\grad F(x,\xi),u\rangle^2 \|u\|^2 e^{-\frac{1}{2}\|u_0\|^2}d u_0 \\
        = &\frac{1}{\kappa(d)}\int_{\R^d}\left(\sum_{i=1}^{d}g_i x_i\right)^2 \left(\sum_{i=1}^{d}x_i^2\right) e^{-\frac{1}{2}\sum_{i=1}^{d}x_i^2}d x_1\cdots d x_d,\\
        = & \frac{1}{\kappa(d)}\int_{\R^d}\langle g, x\rangle^2 \|x\|^2 e^{-\frac{1}{2}\|x\|^2}d x = \frac{1}{\kappa(d)}\int_{\R^d}\|x\|^2 e^{-\frac{\tau}{2}\|x\|^2}\langle g, x\rangle^2 e^{-\frac{1-\tau}{2}\|x\|^2}d x \\
        \leq & \frac{2}{\kappa(d)\tau e}\int_{\R^d}\langle g, x\rangle^2 e^{-\frac{1-\tau}{2}\|x\|^2}d x =\frac{2}{\kappa(d)\tau(1-\tau)^{1+d/2} e}\int_{\R^d} \langle g, x\rangle^2 e^{-\frac{1}{2}\|x\|^2}d x \\ = & \frac{2}{\tau(1-\tau)^{1+d/2} e}\|g\|^2 \leq  (d+4)\|g\|^2,
        \end{split}
    \end{equation}
    where the last $n-d$ dimensions of $u_0$ are integrated to be one,  the first inequality is due to the following fact: $x^p e^{-\frac{\tau}{2}x^2}\leq (\frac{2}{\tau e})^{p/2}$, and the second inequality follows by setting $\tau = \frac{2}{(d+4)}$. 
From \cref{sto_assumption}, we have 
\begin{equation}\label{lemma:stocgradzor-eq5}
\E_{\xi}\|\grad F(x,\xi)\|^2 \leq 2\E_{\xi}\|\grad F(x,\xi) - \grad f(x)\|^2 + 2\|\grad f(x)\|^2  \leq 2\sigma^2+2\|\grad f(x)\|^2.
\end{equation}
Combining \eqref{lemma:stocgradzor-eq3}, \eqref{lemma:stocgradzor-eq4} and \eqref{lemma:stocgradzor-eq5} yields 
    \begin{equation}\label{temp_lemma}
    \begin{split}
        \E_{\xi}\left[\E_{u_0}(\|g_{\mu,\xi}(x)\|^2)\right]&\leq \E_{\xi}\left[\frac{\mu^2}{2}L_g^2(d+6)^3+2(d+4)\|\grad F(x,\xi)\|^2\right]\\
        &\leq \frac{\mu^2}{2}L_g^2(d+6)^3+4(d+4)(\sigma^2+\|\grad f(x)\|^2).
    \end{split}
    \end{equation}
Finally, we have     
    \begin{align*}
        &\E_{\mathcal{U},\xi}\|\bar{g}_{\mu,\xi}(x)-\grad f(x)\|^2\\
        \leq & 2\E_{\mathcal{U},\xi}\|\bar{g}_{\mu,\xi}(x)-\E_{\mathcal{U},\xi}g_{\mu}(x)\|^2+2\|\E_{\mathcal{U},\xi}g_{\mu}(x)-\grad f(x)\|^2 \\
        \leq & \frac{2}{m}\E_{u_1,\xi_1}\|g_{\mu_1,\xi_1}(x)-\E_{u_1,\xi_1}g_{\mu_1,\xi_1}(x)\|^2+\frac{\mu^2 L_g^2}{2}(d+3)^3 \\
        \leq & \frac{2}{m}\E_{u_1,\xi_1}\|g_{\mu_1,\xi_1}(x)\|^2+\frac{\mu^2 L_g^2}{2}(d+3)^3 \\
        \leq & \frac{2}{m}\left( \frac{\mu^2 L_g^2}{2}(d+6)^3 + 4(d+4)[\|\grad f(x)\|^2+\sigma^2] \right)+\frac{\mu^2 L_g^2}{2}(d+3)^3 \\
        \leq & \mu^2 L_g^2(d+6)^3+\frac{8(d+4)}{m}\sigma^2+\frac{8(d+4)}{m}\|\grad f(x)\|^2,
    \end{align*}
    where the second inequality is from \cref{lemma123} and the fourth inequality is from \eqref{temp_lemma}.
\end{proof}

\section{Implementation Details of Black-Box Attacks}\label{sec:blackboxdetails}

\begin{figure}[t]
\begin{center}
\subfigure[]{\includegraphics[width=0.3\columnwidth]{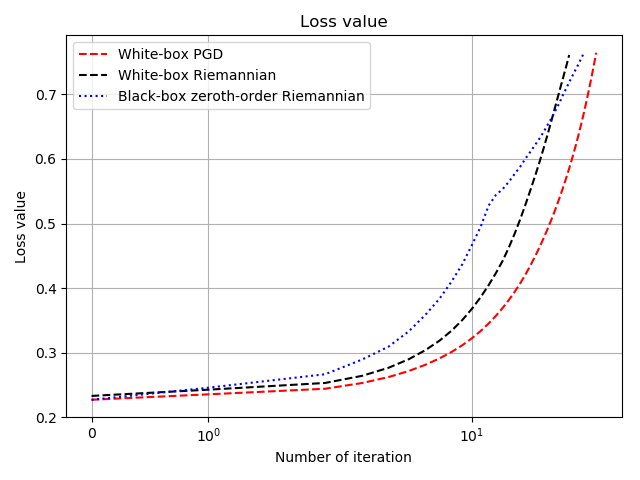}}
\subfigure[]{\includegraphics[width=0.3\columnwidth]{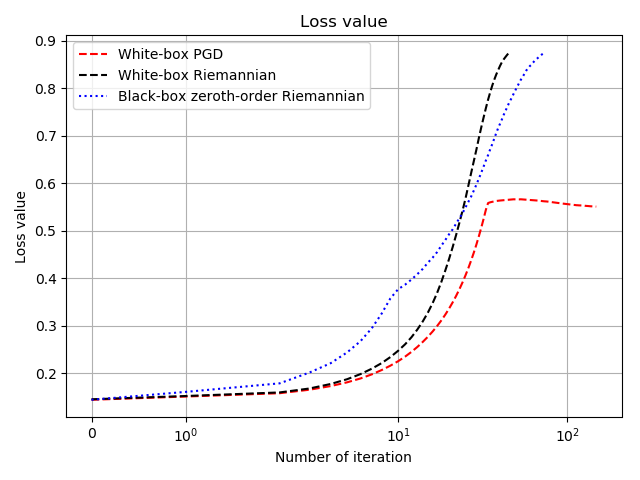}}
\subfigure[]{\includegraphics[width=0.3\columnwidth]{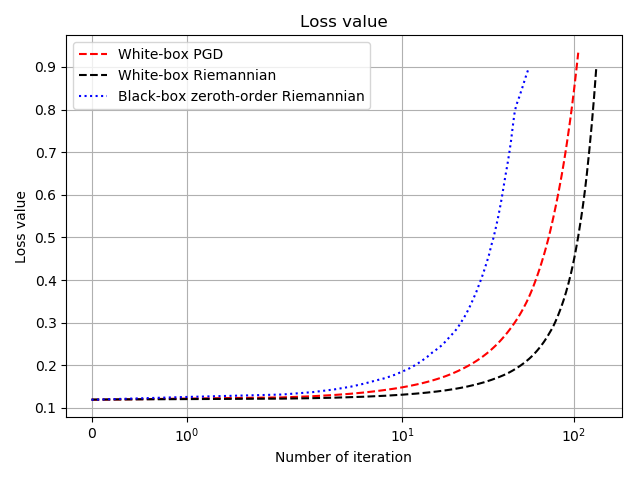}}

\caption{Loss function versus the iteration numbers. We observe that the loss function increases while performing our attacks. The three figures correspond to the last three rows in Figure \ref{fig:attack_result}. For the failed the PGD attack, we notice that the function value stuck in the middle and then decreased, while white and black-box Riemannian attack increased loss value successfully.}
\label{fig:loss_curve_attack}
\end{center}
\end{figure}

Here we provide our white and black-box Riemannian attack algorithm in Algorithms \ref{algorithm_white_attack} and \ref{algorithm_black_attack}, respectively. For the black-box attack, to accelerate convergence, we introduce a pre-attack step to search for a sufficiently large loss value on the prescribed sphere, still in a black-box manner (only use the function value). For further acceleration of the black-box attack, the hierarchical attack \cite{chen2017zoo} and the auto-encoder technique \cite{tu2019autozoom} might be applicable. The attack results on CIFAR-10 images are shown in Figure \ref{fig:attack_result}. We also provide the loss function curve in Figure \ref{fig:loss_curve_attack}. Again the network structure we used is the VGG net \cite{simonyan2014very}. It can be seen from Figure \ref{fig:attack_result} that our black-box attack yields similar attack result as the PGD attack, however PGD failed in one of the five images, and the loss function curve in Figure \ref{fig:loss_curve_attack} indicates that the loss function curve of PGD attack stagnated. This may be due to an inappropriate choice of parameters for the PGD attack. Nevertheless this experiment shows the ability of our Riemannian attack method. We also mention here that the Riemannian attack is in general slower than the PGD attack due to the multi-sampling technique discussed in \cref{rmk1}.

\begin{algorithm}[!ht]
   \caption{White-box attack via Riemannian optimization}
   \label{algorithm_white_attack}
\begin{algorithmic}[1]
   \STATE {\bfseries Input:} Original image $\tilde{x}$, original label $y_0$, radius of the attack region $R$, step size $\eta_k$, some convergence criterion.
   \STATE Randomly sample $\delta$ s.t. $\|\delta\|=R$.
   \STATE Set the initial point $x_0=\tilde{x}+\delta$, $k=0$.
   \REPEAT
   \STATE Update $x_{k+1}=R_{x_k}(\eta_k \grad_x L(\theta,x,y))$, $k = k+1$.
   \UNTIL{Convergence criterion is met.}
\end{algorithmic}
\end{algorithm}

\begin{algorithm}[!ht]
   \caption{Black-box attack via Riemannian zeroth-order optimization}
   \label{algorithm_black_attack}
\begin{algorithmic}[1]
   \STATE {\bfseries Input:} Original image $\tilde{x}$, original label $y_0$, radius of the attack region $R$, step size $\eta_k$, smoothing parameter $\mu$, number of multi-sample $m$, some convergence criterion.

   \STATE Obtain $\delta$ (initial perturbation) by pre-training steps (Algorithm \ref{algorithm_black_attack_pre}).
   \STATE Set the initial point $x_0=\tilde{x}+\delta$, $k=0$.
   \REPEAT
   \STATE Sample $m$ standard Gaussian random matrix $u_i$ on $T_{x_{k}}\M$.
   \STATE Set the random oracle $\bar{g}_\mu(x_{k})$ by (\ref{stocashtic_oracle}).
   \STATE Update $x_{k+1}=R_{x_k}(\eta_k \bar{g}_\mu(x_{k}))$, $k = k+1$.
   \UNTIL{Convergence criterion is met.}
\end{algorithmic}
\end{algorithm}

\begin{algorithm}[!ht]
   \caption{Pre-training step for black-box attack}
   \label{algorithm_black_attack_pre}
\begin{algorithmic}[1]
   \STATE {\bfseries Input:} Original image $\tilde{x}$, original label $y_0$, radius of the attack region $R$, step size $\eta_k$, smoothing parameter $\mu$, number of multi-sample $m$.

   \STATE $x_0=\tilde{x}$.
   \REPEAT
   \STATE Sample $m$ standard Gaussian random matrix $u_i$.
   \STATE Set the random oracle $\bar{g}_\mu(x_{k})$ by (\ref{stocashtic_oracle}), with
            $g_{\mu_i}(x)=\frac{f(x+\mu u_i)-f(x)}{\mu} u_i$
   \STATE Update $x_{k+1}=x_k+\eta_k \bar{g}_\mu(x_{k})$.
   \STATE Update $\delta = x_{k+1}-\tilde{x}$, $k = k+1$.
   \UNTIL{$\|\delta\|\geq R$}

   \STATE $\delta=\frac{\delta}{\|\delta\|} R$
\end{algorithmic}
\end{algorithm}

\begin{figure}[t]
\small
\begin{center}
\subfigure[]{\includegraphics[width=0.20\columnwidth]{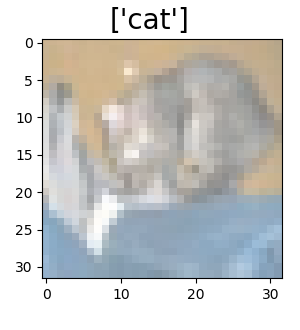}}
\subfigure[]{\includegraphics[width=0.20\columnwidth]{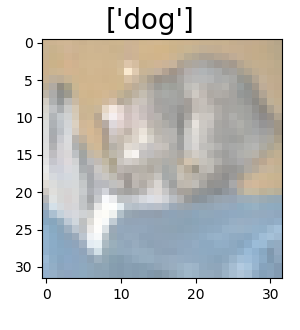}}
\subfigure[]{\includegraphics[width=0.20\columnwidth]{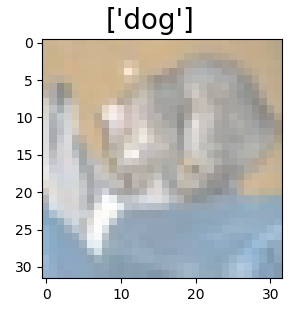}}
\subfigure[]{\includegraphics[width=0.20\columnwidth]{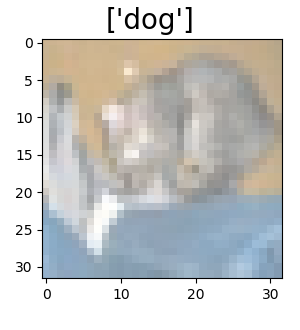}}

\subfigure[]{\includegraphics[width=0.20\columnwidth]{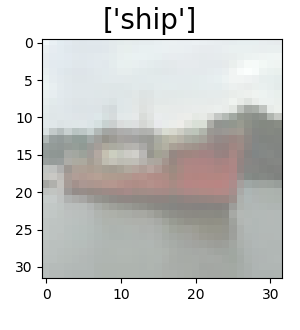}}
\subfigure[]{\includegraphics[width=0.20\columnwidth]{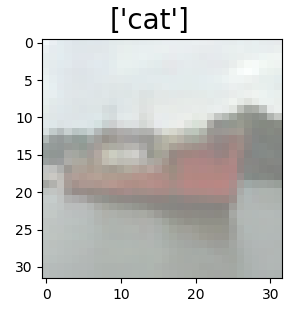}}
\subfigure[]{\includegraphics[width=0.20\columnwidth]{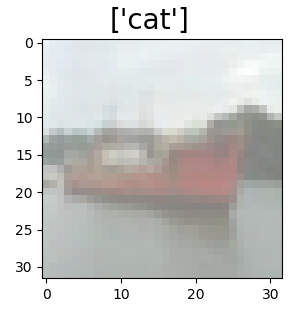}}
\subfigure[]{\includegraphics[width=0.20\columnwidth]{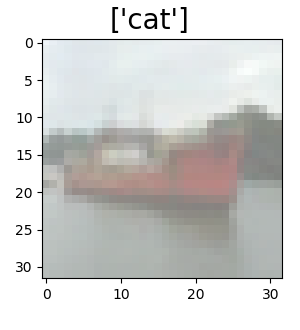}}

\subfigure[]{\includegraphics[width=0.20\columnwidth]{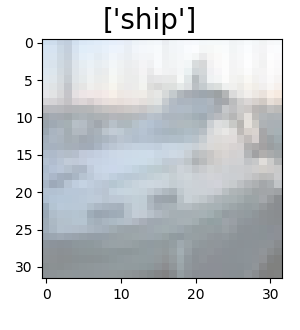}}
\subfigure[]{\includegraphics[width=0.20\columnwidth]{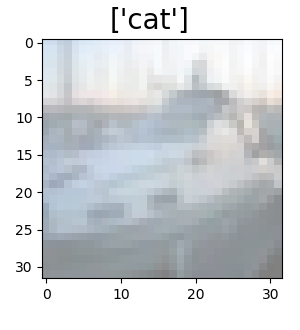}}
\subfigure[]{\includegraphics[width=0.20\columnwidth]{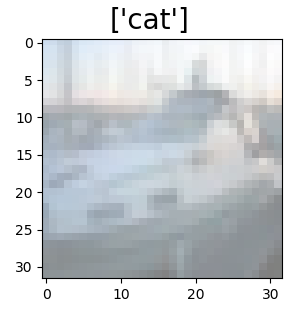}}
\subfigure[]{\includegraphics[width=0.20\columnwidth]{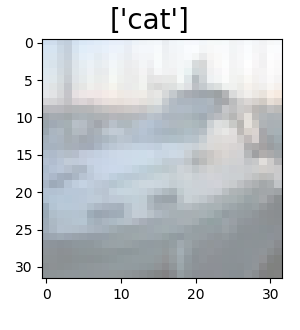}}

\subfigure[]{\includegraphics[width=0.20\columnwidth]{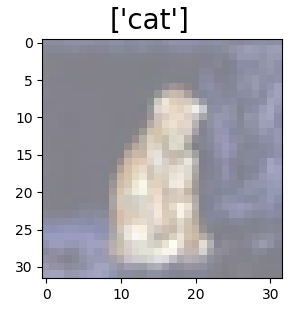}}
\subfigure[]{\includegraphics[width=0.20\columnwidth]{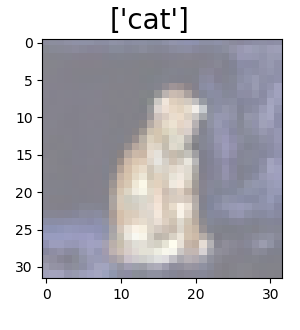}}
\subfigure[]{\includegraphics[width=0.20\columnwidth]{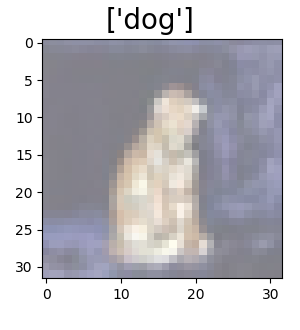}}
\subfigure[]{\includegraphics[width=0.20\columnwidth]{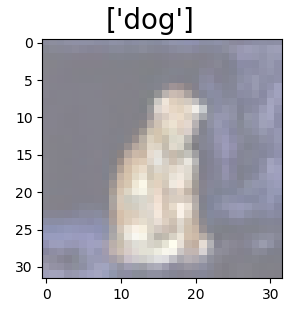}}

\subfigure[]{\includegraphics[width=0.20\columnwidth]{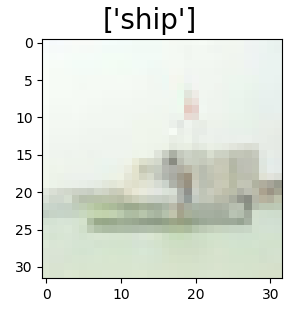}}
\subfigure[]{\includegraphics[width=0.20\columnwidth]{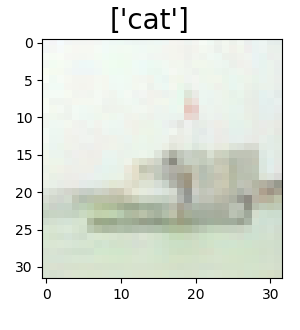}}
\subfigure[]{\includegraphics[width=0.20\columnwidth]{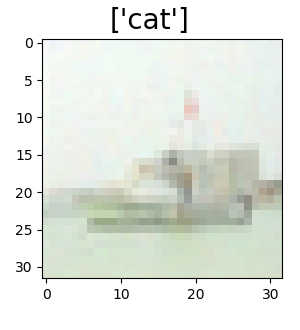}}
\subfigure[]{\includegraphics[width=0.20\columnwidth]{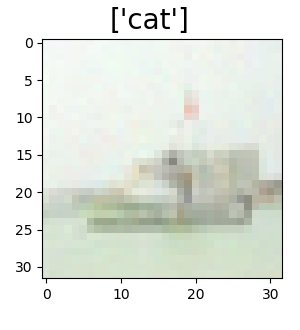}}
\caption{The attack on CIFAR10 picture. From left to right columns: the original image; the PGD attack with a small diameter; white box Riemannian attack on the sphere with the same diameter; black box Riemannian attack on the sphere with the same diameter. Notice that for the figure in the fourth row, the PGD attack failed while the Riemannian attacks succeeded. The diameter is set to be $0.01$ times the norm of the original images.}
\label{fig:attack_result}
\end{center}
\end{figure}

%
%

\section*{Acknowledgments.}
JL and SM acknowledge the support by NSF grants DMS-1953210 and CCF-2007797. KB and SM acknowledge the support by UC Davis CeDAR (Center for Data Science and Artificial Intelligence Research) Innovative Data Science Seed Funding Program. JL also wants to thank Tesi Xiao for helpful discussions on black-box attacks.\bibliographystyle{amsalpha}
\bibliography{references}
\end{document}